 \numberwithin{equation}{section}
\newenvironment{customthm}[1]
  {\innercustomthm}
  {\endinnercustomthm}
\theoremstyle{plain}
\newtheorem{thm}{Theorem}[section]
\newtheorem{cor}[thm]{Corollary}
\newtheorem{lem}[thm]{Lemma}
\newtheorem{prop}[thm]{Proposition}
\theoremstyle{definition}
\newtheorem{defn}[thm]{Definition}
\theoremstyle{remark}
\newtheorem{rem}[thm]{Remark}
\newcommand{\N}{\mathbb{N}}
\newcommand{\R}{\mathbb{R}}
\newcommand{\bp}{\begin{proof}[\ensuremath{\mathbf{Proof}}]}
\newcommand{\bs}{\begin{proof}[\ensuremath{\mathbf{Solution}}]}
\newcommand{\ep}{\end{proof}}
\begin{document}

\title{Large time behavior of solutions of Trudinger's equation}

\author{Ryan Hynd\footnote{Department of Mathematics, MIT.  Partially supported by NSF grant DMS-1554130 and an MLK visiting professorship.}\; and Erik Lindgren\footnote{Department of Mathematics, KTH. Supported by the Swedish Research Council, grant no. 2012-3124.}}  

\maketitle

\begin{abstract}
We study the large time behavior of solutions $v:\Omega\times(0,\infty)\rightarrow \R$ of
the PDE $\partial_t(|v|^{p-2}v)=\Delta_pv.$ We show that $e^{\left(\lambda_p/(p-1)\right)t}v(x,t)$ converges to an extremal of a Poincar\'e inequality on $\Omega$ with optimal constant $\lambda_p$, as $t\rightarrow \infty$. We also prove that the large time values of solutions approximate the extremals of a corresponding ``dual" Poincar\'e inequality on $\Omega$.  Moreover, our theory allows us to deduce the large time asymptotics of related doubly nonlinear flows involving various boundary conditions and nonlocal operators.
\end{abstract}

\section{Introduction}
This note concerns solutions $v:\Omega\times (0,\infty)\rightarrow \R$ of {\it Trudinger's equation}
\begin{equation}\label{TruEqn}
\partial_t(|v|^{p-2}v)=\Delta_pv.
\end{equation}
Here $p\in (1,\infty)$, $\Omega\subset\R^n$ is a bounded domain, and $\Delta_p$ is the $p-$Laplacian
\begin{equation}\label{pLapClassical}
\Delta_p\psi:=\text{div}(|D\psi|^{p-2}D\psi).
\end{equation}
This equation reduces to the standard heat equation when $p=2$, so it is a type of nonlinear diffusion. Trudinger's equation is known in the literature as a doubly nonlinear evolution, and it is distinctive among doubly 
nonlinear evolutions as it is homogeneous. For if $v$ is a solution, then any multiple of $v$ is also a solution. 

\par The PDE \eqref{TruEqn} is a special case of a general class of parabolic equations originally considered by Trudinger \cite{Tru}. He was able to generalize previous efforts of Moser \cite{Moser} and show that nonnegative solutions satisfy a Harnack inequality. Recently, Trudinger's result has been extended to nonnegative solutions which satisfy \eqref{TruEqn} weakly with respect to a certain doubling measure \cite{Kuusi}.  We also remark that positive viscosity solutions were recently shown to exist for $p\ge 2$ in \cite{BhaMar}.
\newline

\par {\bf Dirichlet boundary condition}. Our goal is to infer the large time behavior of solutions of \eqref{TruEqn}. The prototypical initial value problem we will focus on is
\begin{equation}\label{TruIVP}
\begin{cases}
\partial_t(|v|^{p-2}v)=\Delta_pv\quad &\text{in}\;\Omega\times (0,\infty)\\
\hspace{.67in}v=0 \quad &\text{on}\;\partial\Omega\times[0,\infty)\\
\hspace{.67in}v=g\quad &\text{on}\;\Omega\times\{0\}.
\end{cases}
\end{equation}
Here $g:\Omega\rightarrow \R$ is a given initial value function.  In what follows, we will learn much about the large time behavior of solutions by carefully studying their compactness and monotonicity properties and by taking advantage of the homogeneity of Trudinger's equation.

\par A key monotonicity feature of solutions of \eqref{TruIVP} is
\begin{equation}\label{StrongQuotientMon}
\frac{d}{dt}\left\{\frac{\displaystyle\int_\Omega|Dv(x,t)|^pdx}{\displaystyle\int_\Omega|v(x,t)|^pdx}\right\}\le 0.
\end{equation}
Therefore, we expect the flow \eqref{TruIVP} to be related to the {\it Poincar\'e inequality}
\begin{equation}\label{Poincare}
\lambda_p\int_{\Omega}|u|^pdx\le \int_\Omega |Du|^pdx\quad (u\in W^{1,p}_0(\Omega)).
\end{equation}
Here $\lambda_p$ is the largest constant $c$ such that $c\int_{\Omega}|u|^pdx\le \int_\Omega |Du|^pdx$ holds for each $u\in W^{1,p}_0(\Omega)$; so in this sense, $\lambda_p$ is optimal. Extremal functions are those for which equality holds in \eqref{Poincare}.  Recall that a function $u$ is extremal for \eqref{Poincare} if and only if $u$ satisfies the PDE 
\begin{equation}\label{peeEigenvalue}
\begin{cases}
-\Delta_pu=\lambda_p|u|^{p-2}u\quad&\text{in}\;\Omega\\
\hspace{.34in}u=0\quad&\text{on}\;\partial\Omega.
\end{cases}
\end{equation}

\par Another monotonicity property of solutions to \eqref{TruIVP} that will be even more important for us is this
\begin{equation}\label{WeakQuotientMonID}
\frac{d}{dt}\left\{\frac{\displaystyle\||v(\cdot,t)|^{p-2}v(\cdot,t)\|^q_{L^{q}(\Omega)}}{\displaystyle\||v(\cdot,t)|^{p-2}v(\cdot,t)\|^q_{W^{-1,q}(\Omega)}}\right\}\le 0.
\end{equation}
Here and throughout $q:=\frac{p}{p-1}$ is the H\"older exponent conjugate to $p$. This monotonicity suggests that the initial value problem \eqref{TruIVP} improves how $|v(\cdot,t)|^{p-2}v(\cdot,t)$ satisfies the inequality 
\begin{equation}\label{dualPoincare}
\mu_p\| f\|_{W^{-1,q}(\Omega)}^q\le \int_\Omega|f|^qdx\quad (f\in L^{q}(\Omega))
\end{equation}
as $t$ increases. Here $\mu_p:=\lambda_p^{\frac{1}{p-1}}$. We call this inequality the {\it dual Poincar\'e inequality} as equality holds if and only if $f=|u|^{p-2}u$ where $u$ is extremal for the Poincar\'e inequality \eqref{Poincare} (Appendix \ref{AppDPI}).

\par Our main result regarding \eqref{TruIVP} is as follows.  We postpone the definition of a weak solution to \eqref{TruIVP} until the following section. 

\begin{customthm}{1}\label{TruThm1} (i) Assume $v$ is a weak solution of \eqref{TruIVP}.  Then the limit 
\begin{equation}\label{scaledLim}
u:=\lim_{t\rightarrow\infty}e^{\left(\frac{\lambda_p}{p-1}\right)t}v(\cdot,t)
\end{equation}
exists in $L^p(\Omega)$ and $u$ is extremal for \eqref{Poincare}. If $u\not\equiv 0$, then
$v(\cdot,t)\not\equiv 0$ for all $t\ge 0$ and 
$$
\mu_p=\lim_{t\rightarrow\infty}\frac{\||v(\cdot,t)|^{p-2}v(\cdot,t)\|^q_{L^q(\Omega)}}{\||v(\cdot,t)|^{p-2}v(\cdot,t)\|^q_{W^{-1,q}(\Omega)}}.
$$ 
(ii) There is a weak solution $v$ of \eqref{TruIVP} such that the limit \eqref{scaledLim}
exists in $W_0^{1,p}(\Omega)$.  If $u\not\equiv 0$, 
$$
\lambda_p=\lim_{t\rightarrow\infty}\frac{\displaystyle\int_\Omega|Dv(x,t)|^pdx}{\displaystyle\int_\Omega|v(x,t)|^pdx}.
$$ 
\end{customthm}

\par We remark that it is possible that the limit \eqref{scaledLim} vanishes identically. However, we will show that for certain initial conditions this degeneracy does not occur. See Proposition \ref{NondegeneracyProp} below for more on this technical point.  We also emphasize that our methods are not restricted to solutions which are nonnegative.  The large time behavior of nonnegative solutions of related doubly nonlinear evolutions have been studied in various contexts including  \cite{Agueh, ManVes, SavVes, StaVaz}. In particular, in reference \cite{StaVaz}, the uniform convergence of $e^{\left(\lambda_p/(p-1)\right)t}v(\cdot,t)$ is verified for nonnegative solutions of \eqref{TruIVP} provided $\partial\Omega$ is $C^{2,\alpha}$. In proving Theorem \ref{TruThm1}, we will not make any regularity assumptions on $\partial\Omega$. 
\newline

\par {\bf Robin boundary condition}. Next we will consider the large time behavior of weak solutions of the flow
\begin{equation}\label{TruRBC}
\begin{cases}
\hspace{1in}\partial_t(|v|^{p-2}v)=\Delta_pv &\text{in}\;\Omega\times (0,\infty)\\
|Dv|^{p-2}Dv\cdot \nu +\beta|v|^{p-2}v =0 &\text{on}\;\partial\Omega\times[0,\infty)\\
\hspace{1.65in}v=g&\text{on}\;\Omega\times\{0\}.
\end{cases}
\end{equation}
We will assume that $\beta>0$ and that $\partial\Omega$ is $C^1$ with outward unit normal $\nu$.  The optimal Poincar\'e inequality 
\begin{equation}\label{PoincareRBC}
\lambda_p\int_\Omega|u|^pdx\le \int_\Omega|Du|^pdx+\beta \int_{\partial\Omega}|Tu|^pd\sigma \quad (u\in W^{1,p}(\Omega))
\end{equation}
\cite{Bucur,DaiFu} and its dual will play an important role in our analysis. Here  $T: W^{1,p}(\Omega)\rightarrow L^p(\partial\Omega; \sigma)$ is the Sobolev Trace operator and $\sigma$ is $n-1$ dimensional Hausdorff measure.  Adapting the methods used to prove Theorem \ref{TruThm1}, we will characterize the large time behavior of weak solutions of \eqref{TruRBC} in Theorem \ref{TruThmRBC} below. 
\newline

\par {\bf Neumann boundary condition}.
Then we will consider solutions of Trudinger's equation \eqref{TruEqn} which satisfy a Neumann boundary condition 
\begin{equation}\label{TruNBC}
\begin{cases}
\hspace{.22in}\partial_t(|v|^{p-2}v)=\Delta_pv &\text{in}\;\Omega\times (0,\infty)\\
|Dv|^{p-2}Dv\cdot \nu =0 &\text{on}\;\partial\Omega\times[0,\infty)\\
\hspace{.88in}v=g &\text{on}\;\Omega\times\{0\}.
\end{cases}
\end{equation}
Again we will assume that $\partial\Omega$ is $C^1$ with outward unit normal $\nu$.  For this initial value problem, we will employ the following optimal Poincar\'e inequality: for each $u\in W^{1,p}(\Omega)$ that satisfies 
\begin{equation}\label{pAverageZero}
\int_\Omega|u|^{p-2}u\;dx=0,
\end{equation}
we have
\begin{equation}\label{PoincareNBC}
\lambda_p\int_\Omega|u|^pdx\le \int_\Omega|Du|^pdx
\end{equation}
(see \cite{EspNit} and the references therein). We will show that if the ratio of any two nonvanishing extremal functions of \eqref{PoincareNBC} is constant, then a characterization of the large time behavior of solutions to \eqref{TruNBC} as in Theorem \ref{TruThm1} holds.  
This assertion is detailed in Theorem \ref{TruThmNBC} below. \newline

\par {\bf Fractional Trudinger equation}.
Finally, we will study an initial value problem involving a fractional version of Trudinger's equation 
\begin{equation}\label{TruFrac}
\begin{cases}
\partial_t(|v|^{p-2}v)+(-\Delta_p)^sv=0 &\text{in}\;\Omega\times (0,\infty)\\
\hspace{1.45in}v =0 &\text{on}\;(\R^n\setminus\Omega)\times[0,\infty)\\
\hspace{1.45in}v =g &\text{on}\;\Omega\times\{0\}.
\end{cases}
\end{equation}
Here $s\in (0,1)$, and $(-\Delta_p)^s$ is the fractional $p-$Laplacian
\begin{equation}\label{DeltaSpSmooth}
(-\Delta_p)^s\psi(x):=2\lim_{\epsilon\rightarrow 0^+}\int_{\R^n\setminus B_\epsilon(x)}\frac{|\psi(x)-\psi(y)|^{p-2}(\psi(x)-\psi(y))}{|x-y|^{n+ps}}dy \quad (x\in \R^n).
\end{equation}
The Poincar\'e inequality most naturally associated with this flow is 

\begin{equation}\label{FracPoincare}
\lambda_p\int_\Omega|u|^pdx\le \iint_{\R^n\times\R^n}\frac{|u(x)-u(y)|^p}{|x-y|^{n+ps}}dxdy\quad (u\in W^{s,p}_0(\Omega)).
\end{equation}
As before, $\lambda_p$ is chosen to be optimal, and the dual of \eqref{FracPoincare} will be central to our analysis. We also refer interested readers to \cite{Hitch} for various features of the fractional Sobolev space $W^{s,p}_0(\Omega)$.  In Theorem \ref{TruThmFrac} below, we will prove a large time limit for appropriately scaled solutions of \eqref{TruFrac} that is analogous to Theorem \ref{TruThm1}.
\newline

\par This paper is organized as follows. In Section \ref{DBC}, we shall introduce weak solutions of \eqref{TruIVP} and establish various monotonicity and compactness properties of solutions. Next, we will use these results to prove Theorem \ref{TruThm1} in Section \ref{LargeTimeSect}.  In Sections \ref{RBC}, \ref{NBC} and \ref{NonlocalSect}, we will verify analogs of Theorem \ref{TruThm1} for the flows \eqref{TruRBC}, \eqref{TruNBC} and \eqref{TruFrac}, respectively. 
Much of this work was completed in MIT's Norbert Wiener common room, the authors wish to express their gratitude to the MIT mathematics department for its warm hospitality. The authors are also appreciative of the insights Matteo Bonforte provided on a preliminary version of this work.

\section{Weak solutions}\label{DBC}
A natural identity associated with smooth solutions of \eqref{TruIVP} is  
\begin{equation}\label{LpMon}
\frac{d}{dt}\int_{\Omega}\frac{1}{p}|v(x,t)|^pdx=-\frac{1}{p-1}\int_{\Omega}|Dv(x,t)|^pdx.
\end{equation} 
This identity follows from direct computation or from multiplying Trudinger's equation \eqref{TruEqn} by $v$ and integrating by parts. Integrating this identity in time gives
$$
\frac{1}{p}\int_{\Omega}|v(x,t)|^pdx + \frac{1}{p-1}\int^t_0\int_{\Omega}|Dv(x,s)|^pdxds=\frac{1}{p}\int_{\Omega}|g(x)|^pdx
$$
for all $t\ge 0$. These observations motivate the following definition of a weak  solution of \eqref{TruIVP}.

\begin{defn}
Assume $g\in L^p(\Omega)$. A {\it weak solution} of \eqref{TruIVP} is a function $v:\Omega\times[0,\infty)\rightarrow \R$ that satisfies:
$(i)$
\begin{equation}\label{NaturalBounds}
v\in L^\infty([0,\infty);L^p(\Omega))\cap L^p([0,\infty); W^{1,p}_0(\Omega));
\end{equation}
$(ii)$
\begin{equation}\label{TruWeakSoln}
\int^\infty_0\int_\Omega |v|^{p-2}v\psi_tdxdt=\int^\infty_0\int_\Omega |Dv|^{p-2}Dv\cdot D\psi dxdt
\end{equation}
for each $\psi\in C^\infty_c(\Omega\times(0,\infty))$; and $(iii)$
\begin{equation}\label{InitialG}
v(\cdot,0)=g.
\end{equation}
\end{defn}

\par In order to better interpret weak solutions, we will use a definition of the $p$-Laplacian more general than its classical expression \eqref{pLapClassical}. We now consider $-\Delta_p$ as a mapping  
$$
-\Delta_p: W^{1,p}_0(\Omega)\rightarrow W^{-1,q}(\Omega)
$$
such that for each $u\in W^{1,p}_0(\Omega)$
\begin{equation}\label{W1pzeroPlaplace}
\langle -\Delta_pu, \phi\rangle := \int_\Omega |Du|^{p-2}Du\cdot D\phi \;dx,\quad \phi\in W^{1,p}_0(\Omega).
\end{equation}
We leave it to the reader to check that $-\Delta_p$ is a bijection, and for each $u\in W^{1,p}_0(\Omega)$,
\begin{equation}\label{pLapIsometry}
\|-\Delta_pu\|_{W^{-1,q}(\Omega)}=\| u \|^{p-1}_{W^{1,p}_0(\Omega)}.
\end{equation}
Here and below, $\|u\|_{W^{1,p}_0(\Omega)}:=\left(\int_\Omega|Du|^pdx\right)^{1/p}$.

\par Instead of \eqref{TruWeakSoln}, it is equivalent to require that 
$$
\frac{d}{dt}\int_\Omega |v(x,t)|^{p-2}v(x,t)\phi(x)dx+\int_\Omega |Dv(x,t)|^{p-2}Dv(x,t)\cdot D\phi(x) dx=0
$$
holds in the sense of distributions on $(0,\infty)$ for each $\phi\in W^{1,p}_0(\Omega)$.  This is another way of expressing 
\begin{equation}\label{DNEDBC}
\partial_t(|v(\cdot,t)|^{p-2}v(\cdot,t))+(-\Delta_pv(\cdot,t))=0
\end{equation}
in $W^{-1,q}(\Omega)$ for almost every $t\in [0,\infty)$ (Chapter 3, Lemma 1.1 of \cite{Temam}). Therefore
\begin{equation}\label{DerivativeIdentity}
\|\partial_t(|v(\cdot,t)|^{p-2}v(\cdot,t))\|_{W^{-1,q}(\Omega)}=\| v(\cdot,t)\|^{p-1}_{W^{1,p}_0(\Omega)}\quad (\text{a.e.}\; t>0).
\end{equation}
It follows that $\partial_t(|v|^{p-2}v)\in L^q([0,\infty);W^{-1,q}(\Omega))$ and $t\mapsto |v(\cdot,t)|^{p-2}v(\cdot,t)$ is locally absolutely continuous with values in $W^{-1,q}(\Omega)$. Using the notation of Chapter 1 of \cite{AGS}, we have shown
\begin{equation}\label{ACqSpace}
|v|^{p-2}v\in AC^q_{\text{loc}}([0,\infty); W^{-1,q}(\Omega)).
\end{equation}

\par This continuity ensures $v$ is defined at each $t\ge 0$, and in particular, at time $0$ in \eqref{InitialG}. We also can use these observations to establish
that \eqref{LpMon} holds for every weak solution. 

\begin{lem}\label{BasicMonLemma}
Assume $v$ is a weak solution of \eqref{TruIVP}. Then $[0,\infty)\ni t\mapsto \int_\Omega|v(x,t)|^pdx$ is locally absolutely continuous and 
\eqref{LpMon} holds for almost every $t>0$.
\end{lem}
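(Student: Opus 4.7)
The plan is to set $w(\cdot,t) := |v(\cdot,t)|^{p-2}v(\cdot,t)$ and exploit the algebraic identities $|w|^q = |v|^p$ and $|w|^{q-2}w = v$ (valid since $(p-1)q=p$), so that $\int_\Omega \tfrac{1}{p}|v|^p\,dx = \tfrac{1}{p-1}\int_\Omega \tfrac{1}{q}|w|^q\,dx$. The desired identity will then follow from a chain rule of the form
\begin{equation*}
\frac{d}{dt}\int_\Omega \frac{1}{q}|w(x,t)|^q\,dx \;=\; \langle \partial_t w(\cdot,t),\, v(\cdot,t)\rangle,
\end{equation*}
combined with the PDE \eqref{DNEDBC}, which reads $\partial_t w(\cdot,t) = \Delta_p v(\cdot,t)$ in $W^{-1,q}(\Omega)$ for a.e.\ $t$, so that the right-hand side pairing equals $\langle \Delta_p v,v\rangle = -\int_\Omega |Dv|^p\,dx$ by \eqref{W1pzeroPlaplace}.

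First I would collect the ingredients already at hand. By \eqref{ACqSpace} we have $w \in AC^q_{\mathrm{loc}}([0,\infty); W^{-1,q}(\Omega))$, and \eqref{NaturalBounds} gives $v \in L^p_{\mathrm{loc}}([0,\infty); W^{1,p}_0(\Omega)) \cap L^\infty([0,\infty); L^p(\Omega))$. Since $|w| = |v|^{p-1}$, also $w \in L^\infty([0,\infty); L^q(\Omega))$, so the pointwise identity $\int_\Omega |v|^p\,dx = \int_\Omega |w|^q\,dx$ makes sense. Finally, the continuous embedding $L^q(\Omega) \hookrightarrow W^{-1,q}(\Omega)$ dual to $W^{1,p}_0(\Omega)\hookrightarrow L^p(\Omega)$ gives meaning to the duality pairing $\langle \partial_t w(t), v(t)\rangle$ for a.e.\ $t$.

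The technical heart of the argument is the chain rule above, which I would obtain by invoking a standard result on differentiating convex functionals along $AC^q$ curves in the dual space (for instance, the chain rule in Chapter III of Showalter's monograph, or the one in Alt--Luckhaus). Applied to the convex functional $F(w) := \tfrac{1}{q}\int_\Omega |w|^q\,dx$ on $L^q(\Omega)$, whose Fréchet derivative at $w$ is $|w|^{q-2}w = v$, this yields the claimed identity and also the local absolute continuity of $t \mapsto F(w(\cdot,t))$. If one prefers a self-contained argument, the same conclusion can be reached by time-mollification: writing $w^\eta, v^\eta$ for standard mollifications, one has $\tfrac{d}{dt}\int |w^\eta|^q\,dx = q\int |w^\eta|^{q-2}w^\eta\,\partial_t w^\eta\,dx$ pointwise, and then passes to the limit $\eta \to 0^+$ using strong $L^q$-convergence $w^\eta \to w$ in time together with the convergence $|w^\eta|^{q-2}w^\eta \to v$ in $L^p$, which identifies the right-hand side in the limit.

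Assuming the chain rule, absolute continuity of $\int \tfrac{1}{p}|v(\cdot,t)|^p\,dx$ is immediate, and for a.e.\ $t$
\begin{equation*}
\frac{d}{dt}\int_\Omega \frac{1}{p}|v|^p\,dx \;=\; \frac{1}{p-1}\,\langle \partial_t w, v\rangle \;=\; \frac{1}{p-1}\,\langle \Delta_p v, v\rangle \;=\; -\frac{1}{p-1}\int_\Omega |Dv|^p\,dx,
\end{equation*}
which is \eqref{LpMon}. The main obstacle is the chain rule step: $w$ is only weakly time-differentiable in the dual space $W^{-1,q}(\Omega)$, yet we must evaluate the pairing of $\partial_t w$ with $v = |w|^{q-2}w \in W^{1,p}_0(\Omega)$. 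The nonlinearity of the map $w \mapsto |w|^{q-2}w$ is precisely what makes this delicate and is the reason one must appeal to (or reprove via mollification) the Banach-space chain rule rather than a naïve product-rule computation.
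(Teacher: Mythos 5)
Your proposal follows essentially the same route as the paper's proof: both view $\tfrac{1}{q}\int_\Omega|w|^q\,dx$ as a convex functional on $W^{-1,q}(\Omega)$, identify its subdifferential (in the $W^{1,p}_0$–$W^{-1,q}$ pairing) at $w=|v|^{p-2}v$ as the singleton $\{v\}$, invoke the chain rule for $AC^q$ curves from the abstract gradient-flow literature (the paper cites AGS Remark 1.4.6, Visintin Prop.~4.11, Colli Lemma~4.1), and then substitute the PDE. The only small thing to make explicit before invoking that chain rule is the $L^1_{\mathrm{loc}}$ integrability of the product $\||w|^{q-2}w\|_{W^{1,p}_0(\Omega)}\cdot\|\partial_t w\|_{W^{-1,q}(\Omega)} = \|v\|^p_{W^{1,p}_0(\Omega)}$, which the paper records via \eqref{DerivativeIdentity} and which you have implicitly through \eqref{NaturalBounds}.
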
 
\begin{proof} For $w\in W^{-1,q}(\Omega)$, define
$$
\Phi(w):=
\begin{cases}
\frac{1}{q}\int_\Omega|w|^qdx, &\quad  w\in L^q(\Omega)\\
+\infty, &\quad \text{otherwise}
\end{cases}.
$$
Note that $\Phi$ is convex, proper and lower semicontinuous. It is straightforward to verify 
$$
\partial \Phi(w):=\left\{\xi\in W^{1,p}_0(\Omega): \Phi(z)\ge \Phi(w)+\langle \xi, z-w\rangle\; \text{all $z$}\in W^{-1,q}(\Omega)\right\}
$$
is nonempty and equal to the singleton $\{|w|^{q-2}w\}$ if and only if 
$|w|^{q-2}w\in W^{1,p}_0(\Omega)$.   In this case, we write $|\partial \Phi|(w):=\||w|^{q-2}w\|_{W^{1,p}_0(\Omega)}$.

\par Observe
$$
|\partial\Phi|(|v|^{p-2}v)\cdot\|\partial_t(|v|^{p-2}v)\|_{W^{-1,q}(\Omega)}=\|v\|^p_{W^{1,p}_0(\Omega)}\in 
L^1_{\text{loc}}[0,\infty).
$$
Here we have used \eqref{DerivativeIdentity}. In view of \eqref{ACqSpace}, we also have that
$$
\Phi(|v(\cdot,t)|^{p-2}v(\cdot,t))=\frac{1}{q}\int_\Omega|v(x,t)|^pdx
$$
is a locally absolutely continuous function $[0,\infty)$ (see Remark 1.4.6 in \cite{AGS}, Proposition 4.11 in \cite{Vis}, or Lemma 4.1 in \cite{Colli}).  Moreover,
\begin{align*}
\frac{d}{dt}\frac{1}{q}\int_\Omega|v(x,t)|^pdx&=\frac{d}{dt}\Phi(|v(\cdot,t)|^{p-2}v(\cdot,t))\\
&=\langle \partial_t(|v(\cdot,t)|^{p-2}v(\cdot,t)),  \partial\Phi(|v(\cdot,t)|^{p-2}v(\cdot,t))\rangle \\
&=\langle \partial_t(|v(\cdot,t)|^{p-2}v(\cdot,t)), v(\cdot,t)\rangle \\
&=-\langle -\Delta_pv(\cdot,t), v(\cdot,t)\rangle \\
&=- \|v(\cdot,t)\|^p_{W^{1,p}_0(\Omega)}\\
&=- \int_\Omega|Dv(x,t)|^pdx
\end{align*}
for almost every $t>0$.
\end{proof}

The first indication of how the scaling mentioned in Theorem \ref{TruThm1} arises can be seen in the following corollary. 
\begin{cor}\label{MonEthenLp}
Assume $v$ is a weak solution of \eqref{TruIVP}. Then 
$$
\frac{d}{dt}\left\{e^{p\left(\frac{\lambda_p}{p-1}\right)t}\int_\Omega|v(x,t)|^pdx\right\}\le 0
$$
for almost every $t>0$.
\end{cor}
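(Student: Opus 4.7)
The plan is to differentiate the weighted $L^p$-norm using the product rule, replace the derivative of $\int_\Omega |v|^p\,dx$ by the identity from Lemma \ref{BasicMonLemma}, and then apply the Poincar\'e inequality \eqref{Poincare} to close the sign.

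First, I would observe that the weak solution $v$ lies in $L^p([0,\infty);W^{1,p}_0(\Omega))$, so in particular $v(\cdot,t)\in W^{1,p}_0(\Omega)$ for almost every $t>0$. For such $t$, the Poincar\'e inequality \eqref{Poincare} gives
$$
\lambda_p\int_\Omega |v(x,t)|^p\,dx\le \int_\Omega |Dv(x,t)|^p\,dx.
$$
Meanwhile, Lemma \ref{BasicMonLemma} says that $t\mapsto \int_\Omega |v(x,t)|^p\,dx$ is locally absolutely continuous and, multiplying \eqref{LpMon} by $p$, that
$$
\frac{d}{dt}\int_\Omega |v(x,t)|^p\,dx=-\frac{p}{p-1}\int_\Omega |Dv(x,t)|^p\,dx
$$
for almost every $t>0$.

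Since $t\mapsto e^{p(\lambda_p/(p-1))t}$ is smooth and $t\mapsto \int_\Omega|v(x,t)|^p\,dx$ is locally absolutely continuous, their product is locally absolutely continuous, and the product rule yields, for a.e.\ $t>0$,
$$
\frac{d}{dt}\left\{e^{p\left(\frac{\lambda_p}{p-1}\right)t}\int_\Omega|v(x,t)|^p\,dx\right\}=\frac{p}{p-1}\,e^{p\left(\frac{\lambda_p}{p-1}\right)t}\left[\lambda_p\int_\Omega |v(x,t)|^p\,dx-\int_\Omega |Dv(x,t)|^p\,dx\right].
$$
The bracket is nonpositive by Poincar\'e and the exponential prefactor is positive, which gives the claimed inequality.

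There is no real obstacle here: the statement is a direct marriage of the energy identity \eqref{LpMon} with the Poincar\'e inequality, and the exponential weight $e^{p(\lambda_p/(p-1))t}$ is precisely the one that makes the two contributions cancel up to the nonpositive defect in Poincar\'e. The only mild point to be careful about is that Lemma \ref{BasicMonLemma} is only an almost-everywhere statement, but this is exactly why the corollary is phrased for a.e.\ $t>0$.
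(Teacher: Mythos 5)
Your proof is correct and follows the same route as the paper: apply the energy identity from Lemma \ref{BasicMonLemma}, invoke the Poincar\'e inequality \eqref{Poincare}, and close with the product rule. The only cosmetic difference is that the paper applies Poincar\'e to the derivative of the unweighted norm before multiplying by the exponential, whereas you expand the product first; both are the same calculation.
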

\begin{proof}
By Lemma \ref{BasicMonLemma} and the Poincar\'e inequality \eqref{Poincare}, 
$$
\frac{d}{dt}\int_\Omega|v(x,t)|^pdx=-\frac{p}{p-1}\int_\Omega|Dv(x,t)|^pdx\le -p\left(\frac{\lambda_p}{p-1}\right)\int_\Omega|v(x,t)|^pdx.
$$
The assertion now follows by the product rule. 
\end{proof}

\begin{cor}\label{UnifContLp}
Assume $v$ is a weak solution of \eqref{TruIVP}.  Then $v: [0,\infty)\rightarrow L^p(\Omega)$ is bounded and uniformly continuous. 
\end{cor}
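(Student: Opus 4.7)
The plan is to derive boundedness directly from Lemma~\ref{BasicMonLemma} and to get uniform continuity on $[0,\infty)$ by combining pointwise strong continuity at each $t\ge 0$ with the exponential decay of $\|v(\cdot,t)\|_{L^p(\Omega)}$ furnished by Corollary~\ref{MonEthenLp}. Boundedness is immediate: Lemma~\ref{BasicMonLemma} shows $t\mapsto \|v(\cdot,t)\|_{L^p(\Omega)}$ is nonincreasing, so $\|v(\cdot,t)\|_{L^p(\Omega)}\le \|g\|_{L^p(\Omega)}$ for all $t\ge 0$.

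For continuity at a fixed $t_0\ge 0$ along a sequence $t_n\to t_0$, the trick is to work with $w_n:=|v(\cdot,t_n)|^{p-2}v(\cdot,t_n)$ rather than with $v(\cdot,t_n)$ directly. The sequence $\{w_n\}$ is bounded in $L^q(\Omega)$, and by \eqref{ACqSpace} it converges strongly in $W^{-1,q}(\Omega)$ to $w_0:=|v(\cdot,t_0)|^{p-2}v(\cdot,t_0)$. Using the continuous embedding $L^q(\Omega)\hookrightarrow W^{-1,q}(\Omega)$, every weak-$L^q$ subsequential limit of $\{w_n\}$ must agree with $w_0$, so $w_n\rightharpoonup w_0$ in $L^q(\Omega)$. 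Lemma~\ref{BasicMonLemma} also yields the norm convergence
$$\|w_n\|_{L^q(\Omega)}=\|v(\cdot,t_n)\|_{L^p(\Omega)}^{p-1}\longrightarrow \|v(\cdot,t_0)\|_{L^p(\Omega)}^{p-1}=\|w_0\|_{L^q(\Omega)},$$
and since $L^q(\Omega)$ is uniformly convex, the Radon-Riesz property upgrades weak convergence plus norm convergence to strong convergence $w_n\to w_0$ in $L^q(\Omega)$. Passing to a subsequence with a.e.\ convergence and inverting the pointwise homeomorphism $y\mapsto |y|^{q-2}y$ (which recovers $v$ from $w$) produces $v(\cdot,t_n)\to v(\cdot,t_0)$ almost everywhere. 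Combined with the norm convergence $\|v(\cdot,t_n)\|_{L^p(\Omega)}\to \|v(\cdot,t_0)\|_{L^p(\Omega)}$, the Brezis-Lieb lemma delivers strong $L^p$ convergence along that subsequence; since every subsequence of $\{t_n\}$ admits a further subsequence converging to $v(\cdot,t_0)$ in $L^p(\Omega)$, the full sequence converges there.

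To upgrade from pointwise to uniform continuity, I would invoke Corollary~\ref{MonEthenLp}, which gives $\|v(\cdot,t)\|_{L^p(\Omega)}^{p}\le e^{-p\lambda_p t/(p-1)}\|g\|_{L^p(\Omega)}^{p}$, so $v(\cdot,t)\to 0$ in $L^p(\Omega)$ as $t\to\infty$. Extending to $v(\cdot,\infty):=0$, the resulting map from the one-point compactification $[0,\infty]$ to $L^p(\Omega)$ is continuous on a compact space and hence uniformly continuous; restricting to $[0,\infty)$ completes the proof. The main obstacle is the middle step: lifting the merely $W^{-1,q}$ continuity of $|v|^{p-2}v$ provided by \eqref{ACqSpace} to strong $L^p$ continuity of $v$ itself. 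The uniform convexity of $L^q(\Omega)$ (for Radon-Riesz) and the Brezis-Lieb lemma are the essential tools that bridge this gap.
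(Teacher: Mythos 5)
Your proof is correct and follows essentially the same route as the paper: strong $W^{-1,q}(\Omega)$ convergence from \eqref{ACqSpace} plus $L^q$-boundedness give weak $L^q$ convergence of $|v(\cdot,t_n)|^{p-2}v(\cdot,t_n)$, norm convergence from Lemma~\ref{BasicMonLemma} upgrades this to strong $L^q$ convergence (Radon--Riesz is the same fact the paper cites from Evans--Gariepy), and the exponential decay from Corollary~\ref{MonEthenLp} yields uniform continuity. You simply spell out two steps the paper compresses into single sentences, namely passing from strong $L^q$ convergence of $|v|^{p-2}v$ to strong $L^p$ convergence of $v$ via a.e.\ convergence and Brezis--Lieb, and the one-point-compactification argument for uniform continuity.
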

\begin{proof} We will first establish the continuity of $v: [0,\infty)\rightarrow L^p(\Omega)$.  Let $t_k\in [0,\infty)$ and suppose $t_k\rightarrow t$. By \eqref{ACqSpace},
$$ 
|v(\cdot,t_k)|^{p-2}v(\cdot,t_k)\rightarrow |v(\cdot,t)|^{p-2}v(\cdot,t)\;\;\text{in}\;\; W^{-1,q}(\Omega).
$$
In view of \eqref{NaturalBounds}, $(|v(\cdot,t_k)|^{p-2}v(\cdot,t_k))_{k\in \N}\subset L^q(\Omega)$ is also bounded.  It then follows from a routine weak convergence argument that  
\begin{equation}\label{vDoesIndeedConvWeakCo24}
|v(\cdot,t_k)|^{p-2}v(\cdot,t_k)\rightharpoonup |v(\cdot,t)|^{p-2}v(\cdot,t)\;\;\text{in}\;\; L^{q}(\Omega).
\end{equation}
\par We can now invoke Lemma \ref{BasicMonLemma} in order to deduce
\begin{align*}
\lim_{k\rightarrow\infty}\| |v(\cdot,t_k)|^{p-2}v(\cdot,t_k)\|^q_{L^q(\Omega)}&=\lim_{k\rightarrow\infty}\int_\Omega|v(x,t_k)|^pdx\\
&=\int_\Omega|v(x,t)|^pdx\\
&=\| |v(\cdot,t)|^{p-2}v(\cdot,t)\|^q_{L^q(\Omega)}.
\end{align*}
Combining this convergence with \eqref{vDoesIndeedConvWeakCo24} gives  
$$
|v(\cdot,t_k)|^{p-2}v(\cdot,t_k)\rightarrow |v(\cdot,t)|^{p-2}v(\cdot,t)\;\; \text{in}\;\; L^{q}(\Omega) 
$$
(Chapter 1, Theorem 1 of \cite{EG}). As a result, $v(\cdot,t_k)\rightarrow v(\cdot,t)$ in $L^{p}(\Omega)$. We conclude that $v: [0,\infty)\rightarrow L^p(\Omega)$ is continuous, as claimed. 

\par The previous corollary implies  
$$
\int_\Omega|v(x,t)|^pdx\le e^{-p\left(\frac{\lambda_p}{p-1}\right)t} \int_\Omega|g(x)|^pdx
$$
for all $t\ge 0$. Consequently, $v: [0,\infty)\rightarrow L^p(\Omega)$ is bounded and tends to $0$, as $t\rightarrow \infty$. It also follows that this function is necessarily 
uniformly continuous. 
\end{proof}
Next we will establish \eqref{WeakQuotientMonID}, which is an important monotonicity formula for weak solutions in relation to the dual Poincar\'e inequality 
\eqref{dualPoincare}. This observation was inspired by our previous study on curves of maximal slope \cite{HynLin}.  

\begin{prop}\label{WeakQuotientMon}
Assume $v$ is a weak solution of \eqref{TruIVP} with $v(\cdot,t)\not\equiv 0$ for $t\ge 0$.  Then \eqref{WeakQuotientMonID} holds for almost every $t>0$. In particular, 
$$
[0,\infty)\ni t\mapsto \frac{\||v(\cdot,t)|^{p-2}v(\cdot,t)\|^q_{L^{q}(\Omega)}}{\displaystyle\||v(\cdot,t)|^{p-2}v(\cdot,t)\|^q_{W^{-1,q}(\Omega)}}
$$
is nonincreasing. 
\end{prop}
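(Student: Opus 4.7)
The plan is to compute the derivatives of the numerator and denominator of the quotient in \eqref{WeakQuotientMonID} separately, then reduce the desired monotonicity to a pair of H\"older estimates. Write $w(t):=|v(\cdot,t)|^{p-2}v(\cdot,t)$. Since $(p-1)q=p$, the numerator simplifies to $\|w(t)\|^q_{L^q(\Omega)}=\int_\Omega|v(x,t)|^p\,dx$, so Lemma \ref{BasicMonLemma} immediately supplies
$$\frac{d}{dt}\|w(t)\|^q_{L^q(\Omega)}=-\frac{p}{p-1}\int_\Omega|Dv(x,t)|^p\,dx$$
for almost every $t>0$.

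For the denominator I would mirror the argument of Lemma \ref{BasicMonLemma}, but with $\Phi$ replaced by the Legendre conjugate functional $\Psi(w):=\frac{1}{q}\|w\|^q_{W^{-1,q}(\Omega)}$ on $W^{-1,q}(\Omega)$. The isometry \eqref{pLapIsometry} and the bijectivity of $-\Delta_p$ identify $\Psi$ as the convex conjugate of $\frac{1}{p}\|\cdot\|^p_{W^{1,p}_0(\Omega)}$, so $\partial\Psi(w)=\{u\}$ for the unique $u\in W^{1,p}_0(\Omega)$ satisfying $-\Delta_pu=w$. Letting $u(t)\in W^{1,p}_0(\Omega)$ be determined by $-\Delta_pu(t)=w(t)$ (well defined precisely when $v(\cdot,t)\not\equiv0$), and combining \eqref{DNEDBC} with the chain rule justified via \eqref{ACqSpace} and the references \cite{AGS,Vis,Colli} exactly as in Lemma \ref{BasicMonLemma}, I would obtain
$$\frac{d}{dt}\|w(t)\|^q_{W^{-1,q}(\Omega)}=q\langle\partial_tw(t),u(t)\rangle=-q\int_\Omega|Dv|^{p-2}Dv\cdot Du\,dx$$
for almost every $t>0$.

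Denoting $B(t):=\|w(t)\|^q_{L^q(\Omega)}$ and $C(t):=\|w(t)\|^q_{W^{-1,q}(\Omega)}$, the claim that $t\mapsto B/C$ is nonincreasing is equivalent to $B'C\le BC'$. Since $q=p/(p-1)$, a common negative factor cancels from the two derivative formulas and \eqref{WeakQuotientMonID} reduces to the pointwise-in-time inequality
$$\Big(\int_\Omega|Dv|^p\,dx\Big)\Big(\int_\Omega|Du|^p\,dx\Big)\ge B(t)\int_\Omega|Dv|^{p-2}Dv\cdot Du\,dx.$$
Testing $-\Delta_pu=|v|^{p-2}v$ against $v\in W^{1,p}_0(\Omega)$ yields the identity $B(t)=\int_\Omega|v|^p\,dx=\int_\Omega|Du|^{p-2}Du\cdot Dv\,dx$, after which the required inequality is exactly the product of the two H\"older bounds
$$\int_\Omega|Du|^{p-2}Du\cdot Dv\,dx\le C^{1/q}A^{1/p},\qquad \int_\Omega|Dv|^{p-2}Dv\cdot Du\,dx\le A^{1/q}C^{1/p},$$
with $A:=\int_\Omega|Dv|^p\,dx$; indeed the product of the two right-hand sides equals $AC$.

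I expect the main obstacle to be a clean justification of the chain rule for $t\mapsto\Psi(w(t))$ needed for the denominator's derivative formula. This should follow from the absolute continuity \eqref{ACqSpace} together with the local $L^1$-integrability of $|\partial\Psi|(w(t))\cdot\|\partial_tw(t)\|_{W^{-1,q}(\Omega)}=\|u(t)\|_{W^{1,p}_0(\Omega)}\|v(\cdot,t)\|^{p-1}_{W^{1,p}_0(\Omega)}$ (the first factor is bounded since $\|u(t)\|^{p-1}_{W^{1,p}_0(\Omega)}=\|w(t)\|_{W^{-1,q}(\Omega)}\le C\|v(\cdot,t)\|^{p-1}_{L^p(\Omega)}$ via the embedding $L^q\hookrightarrow W^{-1,q}$), using the same machinery (Remark 1.4.6 of \cite{AGS}, Proposition 4.11 of \cite{Vis}, Lemma 4.1 of \cite{Colli}) already invoked for Lemma \ref{BasicMonLemma}; once this is in place the remaining steps are algebraic.
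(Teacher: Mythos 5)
Your proposal is correct and takes essentially the same route as the paper's proof: both differentiate the numerator and denominator of the quotient (using the same chain-rule formula from \cite{AGS}), then reduce the monotonicity to a pair of duality/H\"older estimates whose product is exactly the quantity needed. The only difference is cosmetic — you translate the paper's abstract pairings $\langle w,v\rangle$, $\langle\partial_tw,(-\Delta_p)^{-1}w\rangle$ and the isometry \eqref{pLapIsometry} into explicit integrals of $|Du|^{p-2}Du\cdot Dv$ and $|Dv|^{p-2}Dv\cdot Du$ over $\Omega$ and invoke H\"older in $L^p$, whereas the paper stays in $W^{-1,q}$--$W^{1,p}_0$ duality throughout; the two Hölder bounds you write are precisely the paper's estimates $B\le\|w\|_{W^{-1,q}}\|\partial_tw\|^{q-1}_{W^{-1,q}}$ and $|\langle\partial_tw,(-\Delta_p)^{-1}w\rangle|\le\|\partial_tw\|_{W^{-1,q}}\|w\|^{q-1}_{W^{-1,q}}$ in disguise. (One small inaccuracy: $u(t)=(-\Delta_p)^{-1}w(t)$ is well defined for \emph{every} $t$ since $-\Delta_p$ is a bijection, not only when $v(\cdot,t)\not\equiv0$; the nonvanishing hypothesis is needed only so the quotient itself makes sense.)
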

\begin{proof}
Set $w:=|v|^{p-2}v$.  By \eqref{ACqSpace}, $w\in AC^q_{\text{loc}}([0,\infty); W^{-1,q}(\Omega))$. As $v$ is a  solution of \eqref{DNEDBC},  $w$
satisfies 
$$
(-\Delta_p)^{-1}(\partial_tw(\cdot,t))+|w(\cdot,t)|^{q-2}w(\cdot,t)=0
$$
for almost every $t>0$. Below, we will perform several computations involving $w$ where we suppress the time dependence of $w$ for notational ease. 

\par First, we compute 
\begin{equation}\label{Wmon1}
\frac{d}{dt}\frac{1}{q}\|w\|^q_{L^q(\Omega)}=-\|\partial_tw\|^q_{W^{-1,q}(\Omega)}.
\end{equation}
This computation can be performed exactly as we did for \eqref{LpMon} in the proof of Lemma \ref{BasicMonLemma}. Next, we have the 
general formula
\begin{equation}\label{WAbsForm}
\frac{d}{dt}\frac{1}{q}\|w\|^q_{W^{-1,q}(\Omega)}=\langle \partial_tw,(-\Delta_p)^{-1}w \rangle,
\end{equation}
which is valid for any $w\in AC^q_{\text{loc}}([0,\infty); W^{-1,q}(\Omega))$ (see Remark 1.4.6 in \cite{AGS}).  

\par Also observe that by \eqref{DerivativeIdentity},
\begin{align*}
\left|\langle \partial_tw,(-\Delta_p)^{-1}w \rangle\right|&\le \|\partial_tw\|_{W^{-1,q}(\Omega)}
\|(-\Delta_p)^{-1}w \|_{W^{1,p}_0(\Omega)}\\
&=\|\partial_tw\|_{W^{-1,q}(\Omega)}\|w \|^{q-1}_{W^{-1,q}(\Omega)}
\end{align*}
and 
\begin{align*}
\|w\|^q_{L^q(\Omega)}&=\langle w, |w|^{q-2}w \rangle\\
&=-\langle w, (-\Delta_p)^{-1}(\partial_tw)\rangle\\
&\le\|w \|_{W^{-1,q}(\Omega)} \|(-\Delta_p)^{-1}(\partial_tw) \|_{W^{1,p}_0(\Omega)}\\
&=\|w \|_{W^{-1,q}(\Omega)} \|\partial_tw \|^{q-1}_{W^{-1,q}(\Omega)}.
\end{align*}
Therefore, 
\begin{equation}\label{WproductEstimate}
\|w\|^q_{L^q(\Omega)}\left|\langle \partial_tw,(-\Delta_p)^{-1}w \rangle\right|\le \|w \|^q_{W^{-1,q}(\Omega)} \|\partial_tw \|^{q}_{W^{-1,q}(\Omega)}.
\end{equation}
\par Combining \eqref{Wmon1}, \eqref{WAbsForm} and \eqref{WproductEstimate} give
\begin{align*}
\frac{d}{dt}\frac{\|w\|^q_{L^q(\Omega)}}{\|w\|^q_{W^{-1,q}(\Omega)}}&=
\frac{-q\|\partial_tw\|^{q}_{W^{-1,q}(\Omega)}\|w\|^{q}_{W^{-1,q}(\Omega)}
-q\|w\|^q_{L^q(\Omega)}\langle \partial_tw,(-\Delta_p)^{-1}w \rangle
}{\|w\|^{2q}_{W^{-1,q}(\Omega)}}\\
&=\frac{-q}{\|w\|^{2q}_{W^{-1,q}(\Omega)}}\left(\|\partial_tw\|^{q}_{W^{-1,q}(\Omega)}\|w\|^{q}_{W^{-1,q}(\Omega)}-\|w\|^q_{L^q(\Omega)}\langle -\partial_tw,(-\Delta_p)^{-1}w\rangle \right)\\
&\le 0
\end{align*}
for almost every $t>0$.
\end{proof}
\begin{rem}\label{StrongMonRemark}
We do not know if the monotonicity \eqref{StrongQuotientMon} holds for every weak solution of \eqref{TruIVP}. However, it is not hard to show it holds for each 
smooth solution $v$ that is nonvanishing.  By integrating by parts and using H\"older's inequality, we have
\begin{align*}
\frac{1}{p-1}\int_{\Omega}|Dv|^pdx &=-\frac{1}{p-1}\int_{\Omega}\Delta_pv\cdot vdx  \\
&=-\int_{\Omega}|v|^{p-2}v v_tdx \\
&=-\int_{\Omega}(|v|^{p/2-2}v v_t)|v|^{p/2}dx \\
&\le \left(\int_\Omega |v|^pdx\right)^{1/2}\left(\int_\Omega |v|^{p-2}|v_t|^2dx\right)^{1/2}.
\end{align*}
Direct computation then gives 
$$
\frac{d}{dt}\frac{\int_{\Omega}|Dv|^pdx}{\int_{\Omega}|v|^pdx}=\frac{p(p-1)}{\left(\int_{\Omega}|v|^pdx\right)^2}\left\{\left(\frac{1}{p-1}\int_{\Omega}|Dv|^pdx\right)^2-\int_{\Omega}|v|^{p-2}|v_t|^2dx \int_{\Omega}|v|^{p}dx\right\},
$$
which implies \eqref{StrongQuotientMon}. See Theorem 3.1 of \cite{ManVes} and Lemma 2.1 of \cite{SavVes} for similar computations.
\end{rem}

Note that if the initial condition $g$ is extremal for the Poincar\'e inequality \eqref{Poincare}, then 
\begin{equation}\label{SepofVar}
v(x,t)=e^{-\left(\frac{\lambda_p}{p-1}\right)t}g(x)
\end{equation}
is a solution of \eqref{TruIVP}. Theorem \ref{TruThm1} asserts all weak solutions exhibit this separation of variables type behavior as $t$ tends to $\infty$. 
In view of the monotonicity of the previous proposition, we will show that the expression above is the only weak solution of \eqref{TruIVP} with initial condition $g$. 

\begin{cor}\label{OnlyUniqueness}
Assume $v$ is a weak solution of \eqref{TruIVP} and that $v(\cdot,0)=g$ is extremal for the Poincar\'e inequality \eqref{Poincare}. Then $v$ is necessarily given by 
\eqref{SepofVar}.
\end{cor}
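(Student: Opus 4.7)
The plan is to exploit the dual monotonicity from Proposition \ref{WeakQuotientMon} together with the characterization of extremals for the dual Poincar\'e inequality \eqref{dualPoincare} recorded in the introduction. Since $g$ is extremal for \eqref{Poincare}, the initial datum $w_0 := |g|^{p-2}g$ of the quantity $w(\cdot, t) := |v(\cdot, t)|^{p-2} v(\cdot, t)$ realizes equality in \eqref{dualPoincare}, that is $\|w_0\|_{L^q(\Omega)}^q/\|w_0\|_{W^{-1,q}(\Omega)}^q = \mu_p$. If $g \equiv 0$ the claim is trivial, so assume $g \not\equiv 0$; we will ultimately need $v(\cdot, t) \not\equiv 0$ for all $t \ge 0$ in order to apply Proposition \ref{WeakQuotientMon}, and this will be obtained as a byproduct of the argument itself.

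By continuity of $v:[0,\infty) \to L^p(\Omega)$ (Corollary \ref{UnifContLp}), the set on which $v(\cdot, t) \not\equiv 0$ is open in $[0, \infty)$ and contains $0$; let $[0, t_0)$ be the maximal such interval, allowing $t_0 = \infty$. On this interval Proposition \ref{WeakQuotientMon} applies, so $t \mapsto \|w(\cdot, t)\|_{L^q}^q/\|w(\cdot, t)\|_{W^{-1,q}}^q$ is nonincreasing; combined with the lower bound $\mu_p$ from \eqref{dualPoincare} and its initial value $\mu_p$, the ratio must be constantly equal to $\mu_p$ on $[0, t_0)$. Hence $w(\cdot, t)$ is extremal for \eqref{dualPoincare} for every such $t$. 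By the characterization of dual extremals, $w(\cdot, t) = |u(\cdot, t)|^{p-2} u(\cdot, t)$ for some Poincar\'e extremal $u(\cdot, t)$, and since $s \mapsto |s|^{p-2}s$ is a bijection on $\R$ this forces $u(\cdot, t) = v(\cdot, t)$. Thus $v(\cdot, t)$ itself is Poincar\'e extremal and satisfies $-\Delta_p v(\cdot, t) = \lambda_p w(\cdot, t)$ in $W^{-1, q}(\Omega)$.

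Substituting this eigenvalue relation into \eqref{DNEDBC} reduces the evolution to the linear ODE
$$
\partial_t w(\cdot, t) + \lambda_p w(\cdot, t) = 0 \quad \text{in } W^{-1, q}(\Omega)
$$
for a.e.\ $t \in [0, t_0)$. Using $w \in AC^q_{\text{loc}}([0,\infty); W^{-1,q}(\Omega))$ from \eqref{ACqSpace}, integration yields $w(\cdot, t) = e^{-\lambda_p t}|g|^{p-2} g$, and inverting the pointwise map $s \mapsto |s|^{p-2}s$ then produces $v(\cdot, t) = e^{-\lambda_p t/(p-1)} g$ on $[0, t_0)$. If $t_0 < \infty$, continuity of $v$ into $L^p(\Omega)$ would force $v(\cdot, t_0) = e^{-\lambda_p t_0/(p-1)} g \not\equiv 0$, contradicting maximality; hence $t_0 = \infty$ and \eqref{SepofVar} holds for all $t \ge 0$. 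The main subtlety I anticipate is the rigorous integration of the Banach-space ODE above, which will rely on the absolute continuity \eqref{ACqSpace} rather than on any Lipschitz structure of the nonlinearity; once pointwise extremality of $v(\cdot, t)$ has been established, the flow has effectively been linearized and the remaining steps reduce to bookkeeping.
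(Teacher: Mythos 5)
Your proof is correct and follows essentially the same strategy as the paper's: use the dual monotonicity of Proposition \ref{WeakQuotientMon} together with the extremality of the initial datum to conclude that $v(\cdot,t)$ remains a Poincar\'e extremal, linearize via the eigenvalue equation to solve for $|v|^{p-2}v$, and close with a continuation argument. The paper's write-up is slightly more informal about the interval of non-vanishing, but the underlying argument is identical.
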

\begin{proof}
Suppose $g\equiv 0$, then $\int_\Omega|v(x,t)|^pdx \le \int_\Omega|g(x)|^pdx=0$ for all $t\ge 0$. Hence, \eqref{SepofVar} holds. 

\par Now assume $g\not\equiv 0$. By \eqref{ACqSpace}, there is $T>0$ for which $v(\cdot,t)\not\equiv 0$ for $t\in [0,T)$. On this interval, 
$$
\frac{\||v(\cdot,t)|^{p-2}v(\cdot,t)\|^q_{L^{q}(\Omega)}}{\displaystyle\||v(\cdot,t)|^{p-2}v(\cdot,t)\|^q_{W^{-1,q}(\Omega)}}
\le \frac{\||g|^{p-2}g\|^q_{L^{q}(\Omega)}}{\displaystyle\||g|^{p-2}g\|^q_{W^{-1,q}(\Omega)}}=\mu_p.
$$
As a result, $|v(\cdot,t)|^{p-2}v(\cdot,t)$ is extremal for the dual Poincar\'e inequality \eqref{dualPoincare}, and $v(\cdot,t)$ is extremal for the 
Poincar\'e inequality \eqref{Poincare} for each $t\in [0,T)$. In view of \eqref{DNEDBC} and \eqref{peeEigenvalue}, we have
$$
\partial_t(|v(\cdot,t)|^{p-2}v(\cdot,t))=\Delta_pv(\cdot,t)=-\lambda_p|v(\cdot,t)|^{p-2}v(\cdot,t).
$$
Therefore, $|v(\cdot,t)|^{p-2}v(\cdot,t)=e^{-\lambda_pt}|g|^{p-2}g$ which gives \eqref{SepofVar} on $[0,T)$. 

\par Finally, observe that this argument actually implies we could have chosen $T=+\infty$ from the outset. For if $T$ is the first time that $v(\cdot,T)=e^{-\left(\lambda_p/(p-1)\right)T}g\equiv 0$, then $g$ would have to vanish identically. So if $g\not\equiv 0$, then it must be that $v$ does not vanish identically on $[0,\infty)$.
\end{proof}
Let us now discuss compactness properties of weak solutions. 
\begin{prop}\label{CompactTrudinger}
Assume $(v^k)_{k\in \N}$ is a sequence of weak solutions of \eqref{TruIVP} with $v^k(\cdot,0)=g^k$ and 
$$
\sup_{k\in\N}\int_\Omega|g^k|^pdx<\infty.
$$
There is a subsequence $(v^{k_j})_{j\in \N}$ and $v$ satisfying \eqref{NaturalBounds} such that 
\begin{equation}\label{StrongLocW1p}
v^{k_j}\rightarrow v\;\; \text{in}\;\; L^p_{\text{loc}}([0,\infty); W^{1,p}_0(\Omega)),
\end{equation}
\begin{equation}\label{LocUnifComLp}
v^{k_j}\rightarrow v\;\; \text{in}\;\; C_{\text{loc}}([0,\infty); L^p(\Omega)),
\end{equation}
and
\begin{equation}\label{StrongTimeDerConv}
\partial_t(|v^{k_j}|^{p-2}v^{k_j})\rightarrow \partial_t(|v|^{p-2}v)\;\; \text{in}\;\; L^q_{\text{loc}}([0,\infty); W^{-1,q}(\Omega)).
\end{equation}
Moreover, $v$ is a weak solution of \eqref{TruIVP} with $v(\cdot,0)=g$, where $|g|^{p-2}g$ is a weak limit of $(|g^{k_j}|^{p-2}g^{k_j})_{j\in \N}$ in $L^q(\Omega)$.
\end{prop}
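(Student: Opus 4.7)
My plan is to follow the standard nonlinear compactness procedure for doubly nonlinear parabolic equations: uniform energy bounds, weak compactness, Aubin--Lions type strong compactness for $|v^k|^{p-2}v^k$, and Minty's monotonicity trick to identify the $p$-Laplacian in the limit. Integrating Lemma \ref{BasicMonLemma} in time yields
\begin{equation*}
\tfrac{1}{p}\|v^k(\cdot,t)\|_{L^p(\Omega)}^p + \tfrac{1}{p-1}\int_0^t\!\int_\Omega|Dv^k|^p\,dx\,ds = \tfrac{1}{p}\|g^k\|_{L^p(\Omega)}^p,
\end{equation*}
so the hypothesis on $(g^k)$ yields uniform bounds on $(v^k)$ in $L^\infty([0,\infty);L^p(\Omega))\cap L^p_{\mathrm{loc}}([0,\infty);W^{1,p}_0(\Omega))$; the identity \eqref{DerivativeIdentity} then gives uniform bounds of $(\partial_t(|v^k|^{p-2}v^k))$ in $L^q_{\mathrm{loc}}([0,\infty);W^{-1,q}(\Omega))$. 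A diagonal extraction produces a subsequence $(v^{k_j})$ along which $v^{k_j}\rightharpoonup v$, $|Dv^{k_j}|^{p-2}Dv^{k_j}\rightharpoonup \chi$, and $w^{k_j}:=|v^{k_j}|^{p-2}v^{k_j}\rightharpoonup z$ in the corresponding spaces, with $|g^{k_j}|^{p-2}g^{k_j}\rightharpoonup h$ in $L^q(\Omega)$.

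Since the embedding $L^q(\Omega)\hookrightarrow W^{-1,q}(\Omega)$ is compact (as the dual of Rellich--Kondrachov), the Aubin--Lions--Simon lemma applied to $w^k$ gives $w^{k_j}\to z$ strongly in $C([0,T];W^{-1,q}(\Omega))$ for each $T>0$. Evaluation at $t=0$ shows $z(\cdot,0)=h$; I then set $g:=|h|^{q-2}h$, so $v(\cdot,0)=g$. For each fixed $t$, the uniform $L^q$ bound plus the $W^{-1,q}$ convergence yields $w^{k_j}(\cdot,t)\rightharpoonup z(\cdot,t)$ weakly in $L^q(\Omega)$; passing to a further subsequence via a standard Alt--Luckhaus argument that exploits the strict monotonicity of $r\mapsto |r|^{p-2}r$, one obtains $v^{k_j}\to v:=|z|^{q-2}z$ pointwise a.e.\ on $\Omega\times(0,\infty)$. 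Vitali's theorem and the uniform $L^p$ bound then give $v^{k_j}\to v$ in $L^p_{\mathrm{loc}}([0,\infty);L^p(\Omega))$. To lift this to \eqref{LocUnifComLp}, I use the uniform equicontinuity of $t\mapsto \|v^k(\cdot,t)\|_{L^p}^p$ provided by the energy identity together with an Arzel\`a--Ascoli argument in $L^p(\Omega)$.

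It remains to show $v$ is a weak solution, which reduces to identifying $\chi=|Dv|^{p-2}Dv$, and this is the main obstacle. I plan to invoke Minty's monotonicity trick: testing the weak equation for $v^{k_j}$ against $v^{k_j}$ itself and using the strong $L^p(L^p)$ convergence of $v^{k_j}$ (along with the $C(W^{-1,q})$ convergence of $w^{k_j}$ to handle the $\partial_t$ term and the limit of the initial-data pairing $\langle|g^{k_j}|^{p-2}g^{k_j},v^{k_j}(\cdot,0)\rangle$), I can bound $\limsup_j\int_0^T\!\int_\Omega|Dv^{k_j}|^p\,dx\,dt\le\int_0^T\!\int_\Omega \chi\cdot Dv\,dx\,dt$. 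Combining with the monotonicity inequality
\begin{equation*}
\int_0^T\!\!\int_\Omega(|Dv^{k_j}|^{p-2}Dv^{k_j}-|D\phi|^{p-2}D\phi)\cdot D(v^{k_j}-\phi)\,dx\,dt\ge 0,
\end{equation*}
valid for any $\phi\in L^p([0,T];W^{1,p}_0(\Omega))$, and passing to the limit produces the same inequality with $|Dv^{k_j}|^{p-2}Dv^{k_j}$ replaced by $\chi$ and $v^{k_j}$ by $v$; choosing $\phi=v\pm\epsilon\psi$ and sending $\epsilon\to 0^+$ yields $\chi=|Dv|^{p-2}Dv$. As a byproduct, $\|Dv^{k_j}\|_{L^p([0,T]\times\Omega)}\to\|Dv\|_{L^p([0,T]\times\Omega)}$, so uniform convexity upgrades the weak convergence to the strong convergence \eqref{StrongLocW1p}. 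Finally, \eqref{StrongTimeDerConv} follows from \eqref{DerivativeIdentity} applied to each $v^{k_j}-v$ together with this norm convergence.
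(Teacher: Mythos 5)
Your overall plan (energy bounds, Aubin--Lions--Simon, Minty's trick) is the same as the paper's, and most of the ingredients are right. However there is one genuine gap in the key Minty step that identifies $\chi=|Dv|^{p-2}Dv$.

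You claim $\limsup_j\int_0^T\!\int_\Omega|Dv^{k_j}|^p\,dx\,dt\le\int_0^T\!\int_\Omega\chi\cdot Dv\,dx\,dt$ by testing against $v^{k_j}$ on $[0,T]$ and invoking ``the limit of the initial-data pairing $\langle |g^{k_j}|^{p-2}g^{k_j},v^{k_j}(\cdot,0)\rangle$.'' But that pairing equals $\int_\Omega|g^{k_j}|^p\,dx$, and the hypotheses only give weak $L^q$-convergence of $|g^{k_j}|^{p-2}g^{k_j}$ to $h$; by weak lower semicontinuity this yields $\liminf_j\int_\Omega|g^{k_j}|^p\,dx\ge \|h\|_{L^q}^q=\int_\Omega|g|^p\,dx$, with strict inequality entirely possible. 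In that case the energy identity
\[
\int_0^T\!\int_\Omega|Dv^{k_j}|^p\,dx\,dt=\frac{p-1}{p}\Bigl(\int_\Omega|g^{k_j}|^p\,dx-\int_\Omega|v^{k_j}(\cdot,T)|^p\,dx\Bigr)
\]
produces a limsup that can be \emph{strictly larger} than $\int_0^T\!\int_\Omega\chi\cdot Dv$ (gradient ``energy'' can concentrate near $t=0$), and Minty's argument collapses: with $\phi=v-\tau\psi$ the extra term $E>0$ survives as $\tau\to0^+$ and gives no information. The paper avoids this by carrying out the energy comparison only on intervals $[t_0,t_1]\subset(0,\infty)$ whose endpoints are chosen among the (full-measure) times at which the already-established pointwise convergence $v^{k_j}(\cdot,t)\to v(\cdot,t)$ in $L^p(\Omega)$ holds; at such endpoints the boundary pairings converge, and the identification $\chi=|Dv|^{p-2}Dv$ follows on every such $[t_0,t_1]$, hence a.e.\ on $\Omega\times(0,\infty)$. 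You need this restriction to $t_0>0$; starting from $t=0$ does not work.

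Two smaller remarks. First, your route to $v^{k_j}\to v$ a.e.\ via an ``Alt--Luckhaus argument'' is a legitimate alternative to the paper's, which instead identifies $w=|v|^{p-2}v$ by a Minty argument in the $W^{-1,q}\!\times W^{1,p}_0$ duality; both exploit the strict monotonicity of $r\mapsto|r|^{p-2}r$, but the paper's version avoids having to verify the Alt--Luckhaus time-translate estimate. Second, for \eqref{LocUnifComLp}, Arzel\`a--Ascoli in $L^p(\Omega)$ does not apply: the orbits $\{v^k(\cdot,t)\}_k$ are only bounded, not precompact, in $L^p(\Omega)$. The paper instead applies Helly's theorem to the nonincreasing functions $t\mapsto\int_\Omega|v^k(\cdot,t)|^p\,dx$, shows the limit equals $\int_\Omega|v(\cdot,t)|^p\,dx$ and is continuous, uses the Dini-type fact that monotone functions converging pointwise to a continuous limit converge locally uniformly, and finally combines uniform norm convergence with weak convergence in $L^q(\Omega)$ of $|v^{k_j}(\cdot,t)|^{p-2}v^{k_j}(\cdot,t)$ to obtain local uniform strong $L^p$-convergence. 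You should replace the Arzel\`a--Ascoli step with an argument of this type.
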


\begin{proof}
1. Set $C:=\sup_{k\in\N}\int_\Omega|g^k|^pdx$. By Lemma \ref{BasicMonLemma},
\begin{equation}\label{kkNaturalBounds}
\sup_{t\ge 0}\int_\Omega|v^k(x,t)|^pdx+\int^\infty_0\int_\Omega|Dv^k(x,t)|^pdx \le C
\end{equation}
for each $k\in \N$.  
It then follows from \eqref{DerivativeIdentity} that
\begin{equation}\label{dualkkNaturalBounds}
\sup_{t\ge 0}\| |v^k(\cdot,t)|^{p-2}v^k(\cdot,t)\|^q_{L^q(\Omega)}+\int^\infty_0\|\partial_t(|v^k(\cdot,t)|^{p-2}v^k(\cdot,t))\|^q_{W^{-1,q}(\Omega)}dt\le C
\end{equation}
uniformly in $k\in \N$. 

\par As $q>1$ and $L^q(\Omega)\subset W^{-1,q}(\Omega)$ with compact embedding, it follows that there is a subsequence $(v^{k_j})_{j\in \N}$ and 
$w: [0,\infty)\rightarrow W^{-1,q}(\Omega)$ such that 
$$
|v^{k_j}|^{p-2}v^{k_j}\rightarrow w\;\; \text{in}\;\; C_{\text{loc}}([0,\infty); W^{-1,q}(\Omega)) 
$$
as $j\rightarrow\infty$ \cite{Aubin, Simon}. By weak convergence, 
$$
\partial_t(|v^{k_j}|^{p-2}v^{k_j})\rightharpoonup \partial_tw\;\; \text{in}\;\; L^q([0,\infty); W^{-1,q}(\Omega)),
$$
as $j\rightarrow\infty$. Without any loss of generality, we may also assume there is $v$ for which
\begin{equation}\label{WeakConvvkayJay}
v^{k_j}\rightharpoonup  v\;\; \text{in}\;\; L^p([0,\infty); W_0^{1,p}(\Omega)),
\end{equation}
and that there is $\xi$ such that
$$
|Dv^{k_j}|^{p-2}Dv^{k_j}\rightharpoonup  \xi\;\; \text{in}\;\; L^q(\Omega\times [0,\infty); \R^n)
$$ 
as $j\rightarrow\infty$.  Moreover, 
\begin{equation}\label{VeryWEAKeqn}
\partial_t w=\text{div}(\xi)
\end{equation}
in the sense of distributions on $\Omega\times (0,\infty)$.

\par 2. For any interval $[t_0,t_1]\subset [0,\infty)$
$$
\int^{t_1}_{t_0}\int_\Omega|v^k|^pdxdt=\int^{t_1}_{t_0}\int_\Omega|v^k|^{p-2}v^k\cdot v^kdxdt=
\int^{t_1}_{t_0}\langle |v^k(\cdot,t)|^{p-2}v^k(\cdot,t) , v^k(\cdot,t)\rangle dt.
$$
As a result, 
\begin{equation}\label{StronginLp}
\lim_{j\rightarrow \infty}\int^{t_1}_{t_0}\int_\Omega|v^{k_j}|^pdxdt=\int^{t_1}_{t_0}\langle w(\cdot,t) , v(\cdot,t)\rangle dt.
\end{equation}
In particular, for $u\in L^p([0,\infty); W^{1,p}_0(\Omega) )$
\begin{align*}
0&\le \lim_{j\rightarrow \infty}\int^{t_1}_{t_0}\int_\Omega\left(|v^{k_j}|^{p-2}v^{k_j} - |u|^{p-2}u \right)(v^k- u)dxdt\\
&=\lim_{j\rightarrow \infty}\left[\int^{t_1}_{t_0}\langle|v^k(\cdot,t)|^{p-2}v^k(\cdot,t) , v^k(\cdot,t) -u(\cdot,t)\rangle dt  - \int^{t_1}_{t_0}\int_\Omega|u|^{p-2}u (v^k- u)dxdt\right]\\
&=\int^{t_1}_{t_0}\langle w(\cdot,t),v(\cdot,t)- u(\cdot,t)\rangle dt - \int^{t_1}_{t_0}\int_\Omega|u|^{p-2}u (v- u)dxdt   \\
&=\int^{t_1}_{t_0}\langle w(\cdot,t)-|u(\cdot,t)|^{p-2}u(\cdot,t),v(\cdot,t)- u(\cdot,t)\rangle dt.
\end{align*}
\par We can now choose $u=v-\tau \phi$ for $\phi\in C^\infty_c(\Omega\times[0,\infty))$ and $\tau>0$ to get 
$$
0\le \tau\int^{t_1}_{t_0}\langle w(\cdot,t)-|v(\cdot,t)-\tau\phi(\cdot,t)|^{p-2}(v(\cdot,t)-\tau\phi(\cdot,t)),\phi(\cdot,t)\rangle dt.
$$
Cancelling $\tau$ and then sending $\tau\rightarrow 0^+$ gives
$$
0\le \int^{t_1}_{t_0}\langle w(\cdot,t)-|v(\cdot,t)|^{p-2}v(\cdot,t),\phi(\cdot,t)\rangle dt.
$$
As a result
\begin{equation}\label{wisVtothePminus1}
w=|v|^{p-2}v.
\end{equation}

\par 3. Now it follows from \eqref{WeakConvvkayJay} and \eqref{StronginLp} that 
$$
v^{k_j}\rightarrow v\;\; \text{in}\;\; L^p_{\text{loc}}([0,\infty); L^{p}(\Omega)),
$$
as $j\rightarrow \infty$.  We may also assume  
\begin{equation}\label{PointwiseConv}
v^{k_j}(\cdot,t)\rightarrow v(\cdot,t)\;\; \text{in}\;\; L^{p}(\Omega)
\end{equation}
for almost every $t>0$, since this type of convergence happens for a subsequence of  $(v^{k_j})_{j\in \N}$. 
In view of \eqref{kkNaturalBounds}, we also see that $v$ satisfies \eqref{NaturalBounds}. Now let $t_0, t_1\in [0,\infty)$ $(t_0<t_1)$ be 
two such times for which \eqref{PointwiseConv} occurs.  By Lemma \ref{BasicMonLemma},
\begin{align}\label{StronginW1p}
\lim_{j\rightarrow \infty}\int^{t_1}_{t_0}\int_\Omega|Dv^{k_j}(x,t)|^pdxdt 
&= -\left(\frac{p-1}{p}\right)\lim_{j\rightarrow \infty}\left[\int_\Omega|v^{k_j}(x,t_1)|^pdx -\int_\Omega|v^{k_j}(x,t_0)|^pdx \right] \nonumber \\
&=-\left(\frac{p-1}{p}\right)\left[\int_\Omega|v(x,t_1)|^pdx -\int_\Omega|v(x,t_0)|^pdx \right].
\end{align}

\par From \eqref{VeryWEAKeqn} and \eqref{wisVtothePminus1}, 
\begin{equation}\label{VeryWEAKeqn2}
\partial_t(|v|^{p-2}v)=\text{div}(\xi)
\end{equation}
holds in the sense of distributions on $\Omega\times(0,\infty)$. The proof of Lemma \ref{BasicMonLemma} can be now adapted to show that $t\mapsto \int_\Omega|v(x,t)|^pdx$ is absolutely continuous
and satisfies 
$$
\frac{d}{dt} \frac{1}{p}\int_\Omega|v(x,t)|^pdx = -\frac{1}{p-1} \int_\Omega D\xi(x,t)\cdot Dv(x,t)dx
$$
for almost every $t\ge 0$. Combining with \eqref{StronginW1p}, we have 
\begin{equation}\label{StronginW1p2}
\lim_{j\rightarrow \infty}\int^{t_1}_{t_0}\int_\Omega|Dv^{k_j}(x,t)|^pdxdt=\int^{t_1}_{t_0}\int_\Omega \xi(x,t)\cdot Dv(x,t)dxdt.
\end{equation}
Now we can employ virtually the same argument used to verify \eqref{wisVtothePminus1} in order to deduce 
$$
\xi = |Dv|^{p-2}Dv.
$$
In view of \eqref{StronginW1p2}, we conclude \eqref{StrongLocW1p}; and by \eqref{VeryWEAKeqn2}, we see that $v$ is a weak solution of \eqref{TruIVP}. 

\par It also follows from \eqref{StrongLocW1p} that 
\begin{align*}
\lim_{j\rightarrow \infty}\int^{t_1}_{t_0}\|\partial_t(|v^{k_j}(\cdot,t)|^{p-2}v^{k_j}(\cdot,t))\|^q_{W^{-1,q}(\Omega)}dt&=\lim_{j\rightarrow \infty}\int^{t_1}_{t_0}\int_\Omega|Dv^{k_j}(x,t)|^pdxdt\\
&=\int^{t_1}_{t_0}\int_\Omega |Dv(x,t)|^pdx\\
&=\int^{t_1}_{t_0}\|\partial_t(|v(\cdot,t)|^{p-2}v(\cdot,t))\|^q_{W^{-1,q}(\Omega)}dt
\end{align*}
for each interval $[t_0,t_1]\subset [0,\infty)$. This verifies the assertion \eqref{StrongTimeDerConv}.

\par 4. Recall that for each $k$, the function $t\mapsto \int_\Omega|v^k(x,t)|^pdx$ is nonincreasing.  By Helly's Theorem (Lemma 3.3.3 in \cite{AGS}), we can pass to a further subsequence 
if necessary to find a nonincreasing function $f:[0,\infty)\rightarrow [0,\infty)$ such that 
$$
f(t)=\lim_{j\rightarrow \infty} \int_\Omega|v^{k_j}(x,t)|^pdx.
$$
for all $t\ge 0$. By the pointwise convergence \eqref{PointwiseConv}, we have 
\begin{equation}\label{fEqualIntegral}
f(t)=\int_\Omega|v(x,t)|^pdx\quad a.e.\; t>0.
\end{equation}
Recall the bound \eqref{dualkkNaturalBounds} and that the limit $|v(\cdot,t)|^{p-2}v(\cdot,t)=\lim_{j\rightarrow\infty}|v^{k_j}(\cdot,t)|^{p-2}v^{k_j}(\cdot,t)$ holds in $W^{-1,q}(\Omega)$. It follows that
$|v^{k_j}(\cdot,t)|^{p-2}v^{k_j}(\cdot,t)$ converges to $|v(\cdot,t)|^{p-2}v(\cdot,t)$ weakly in $L^q(\Omega)$ and
\begin{align}\label{flessThanIntegral}
f(t)&=\lim_{j\rightarrow \infty}\||v^{k_j}(\cdot,t)|^{p-2}v^{k_j}(\cdot,t)\|^q_{L^q(\Omega)}\nonumber \\
&\ge \||v(\cdot,t)|^{p-2}v(\cdot,t)\|^q_{L^q(\Omega)}\nonumber \\
&= \int_\Omega|v(x,t)|^pdx
\end{align}
for each $t\ge0$.

\par We claim that \eqref{fEqualIntegral} actually holds for every $t>0$. Fix $s>0$ and select $s_m\in (0,s)$ with $s=\lim_{m\rightarrow\infty}s_m$ such 
that \eqref{fEqualIntegral} holds for each $t=s_m$. Such a sequence exists as \eqref{fEqualIntegral} holds almost everywhere on $[0,\infty)$. Note that 
$f(s)\le f(s_m)$ for all $m\in \N$.  By \eqref{flessThanIntegral}
and Lemma \ref{BasicMonLemma}, 
\begin{align*}
f(s)&\le \liminf_{m\rightarrow \infty}f(s_m)\\
&= \liminf_{m\rightarrow \infty}\int_\Omega|v(x,s_m)|^pdx\\
&= \int_\Omega|v(x,s)|^pdx\\
&\le f(s).
\end{align*}
This computation verifies our claim that \eqref{fEqualIntegral} holds for every $t>0$ and in fact 
$$
\lim_{j\rightarrow \infty} \int_\Omega|v^{k_j}(x,t)|^pdx= \int_\Omega|v(x,t)|^pdx
$$
uniformly for $t$ belonging to compact subintervals of $(0,\infty)$ \cite{StackExchange}. A slight variation of the proof of 
Corollary \ref{UnifContLp} can now be used to verify \eqref{LocUnifComLp}.
\end{proof}
It has been established that there is a weak solution of \eqref{TruIVP} 
as defined above. See, for instance, any of the references \cite{AltLuck,Bernis, DiaThe, DiBShow, GraMin, Tsutsumi, Vis, Xu}.  
However, it is not known if weak solutions are unique unless they have better regularity than what is typically known for a general weak solution as explained in \cite{AltLuck, DiaThe}. 
Our purpose here is to show how that there is at least one weak solution of \eqref{TruIVP} which has some useful properties with regard to its large 
time behavior. To this end, we will study solutions of the following {\it implicit time scheme}: $(u^k)_{k\in \N}$, where 
\begin{equation}\label{ImplicitTimeScheme}
\begin{cases}
\displaystyle\frac{{\cal J}_p(u^k)- {\cal J}_p(u^{k-1})}{\tau}=\Delta_pu^k\quad &\text{in}\;\Omega \\
\hspace{1.15in}u^k=0\quad &\text{on}\;\partial\Omega
\end{cases}
\end{equation}
for each $k\in \N$ and $\tau>0$.  Here $u^0=g\in L^p(\Omega)$ and
$$
{\cal J}_p(z):=|z|^{p-2}z, \quad z\in \R.
$$

\par Standard methods from the calculus of variations can be used to show that there is a unique solution sequence 
$(u^k)_{k\in \N}\subset W^{1,p}_0(\Omega)$ of \eqref{ImplicitTimeScheme}. If we multiply the PDE in \eqref{ImplicitTimeScheme} by $u^k$ and integrate by parts, 
we find 
$$
\frac{1}{p}\int_\Omega|u^k|^pdx-\frac{1}{p}\int_\Omega|u^{k-1}|^pdx\le -\frac{\tau}{p-1}\int_\Omega|Du^{k}|^pdx\quad (k\in \N).
$$
This inequality is a discrete version of \eqref{LpMon} and implies 
\begin{equation}\label{DiscreteEnergyBound}
\sup_{k\in \N}\int_{\Omega}|u^k|^pdx + \tau \sum^\infty_{k=1}\int_\Omega|Du^k|^pdx \le \int_{\Omega}|g|^pdx.
\end{equation}
Alternatively, if we multiply the PDE in \eqref{ImplicitTimeScheme} by $u^k-u^{k-1}$ and integrate by parts, we get
\begin{equation}\label{DiscreteEnergyMon}
\frac{1}{p}\int_\Omega|Du^k|^pdx-\frac{1}{p}\int_\Omega|Du^{k-1}|^pdx\le -\int_\Omega\left(\frac{{\cal J}_p(u^k)- {\cal J}_p(u^{k-1})}{\tau}\right)(u^k-u^{k-1})dx\quad (k\in \N).
\end{equation}

\par For each $t\ge 0$, let us define
\begin{equation}\label{vTauAndwTau}
\begin{cases}
v_\tau(\cdot,t):=
\begin{cases}
g, \quad & t=0\\
u^k, \quad & t\in ((k-1)\tau,k\tau]
\end{cases}\\\\
w_\tau(\cdot,t):={\cal J}_p(u^{k-1})+\left(\frac{t- (k-1)\tau}{\tau}\right)\left({\cal J}_p(u^{k})-{\cal J}_p(u^{k-1})\right),\quad t\in [(k-1)\tau,k\tau].
\end{cases}
\end{equation}
It is evident that 
$$
\partial_t(w_\tau(\cdot,t))=\Delta_pv_\tau(\cdot,t)\quad \text{a.e.}\; t\ge 0.
$$
Employing \eqref{DiscreteEnergyBound} and \eqref{pLapIsometry}, it is routine to check 
$$
\sup_{t\ge 0}\int_\Omega|v_\tau(x,t)|^pdx + \int^\infty_0\int_\Omega|Dv_\tau(x,t)|^pdx dt\le \int_\Omega|g|^pdx 
$$
and
$$
\sup_{t\ge 0}\int_\Omega|w_\tau(x,t)|^qdx+\int^\infty_0\|\partial_t(w_\tau(\cdot,t))\|_{W^{-1,q}(\Omega)}^qdt\le  \int_\Omega|g|^pdx 
$$
for every $\tau>0$. And given that ${\cal J}_p$ is monotone, \eqref{DiscreteEnergyMon} implies  
\begin{equation}\label{preW1pMonForOneWS}
\int_\Omega|Dv_\tau(x,t)|^pdx\le \int_\Omega|Dv_\tau(x,s)|^pdx,\quad 0\le s\le t<\infty
\end{equation}
for each $\tau>0$.

\par Using ideas very similar to those used to prove Proposition \ref{CompactTrudinger}, we have the following assertion.  The main point for stating this assertion (in view of the known existence results) is the monotonicity \eqref{W1pMonForOneWS}, which is mainly due to \eqref{preW1pMonForOneWS}. 

\begin{prop}\label{ExistenceProp}
Let $g\in L^p(\Omega)$, and for each $\tau>0$, define $v_\tau$ and $w_\tau$ via \eqref{vTauAndwTau}. There is a sequence $(\tau_j)_{j\in \N}\subset(0,\infty)$ tending to $0$ and $v$ satisfying \eqref{NaturalBounds} such that 
$$
v_{\tau_j}\rightarrow v\;\; \text{in}\;\; L^p_{\text{loc}}([0,\infty); W^{1,p}_0(\Omega)),
$$
$$
v_{\tau_j}\rightarrow v\;\; \text{in}\;\; C_{\text{loc}}([0,\infty); L^p(\Omega)),
$$
and
$$
\partial_t(w_{\tau_j})\rightarrow \partial_t(|v|^{p-2}v)\;\; \text{in}\;\; L^q_{\text{loc}}([0,\infty); W^{-1,q}(\Omega)).
$$
Moreover, $v$ is a weak solution of \eqref{TruIVP} with $v(\cdot,0)=g$ that satisfies 
\begin{equation}\label{W1pMonForOneWS}
\int_\Omega|Dv(x,t)|^pdx\le \int_\Omega|Dv(x,s)|^pdx
\end{equation}
for almost every $t,s\in (0,\infty)$ with $t\ge s$.
\end{prop}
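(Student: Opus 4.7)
The plan is to mimic closely the compactness argument of Proposition \ref{CompactTrudinger}, now applied to the interpolants $(v_\tau)$ and $(w_\tau)$ produced by the implicit scheme \eqref{ImplicitTimeScheme}, and then to pass the discrete monotonicity \eqref{preW1pMonForOneWS} to the limit. Using the uniform bounds displayed just before the proposition together with $\partial_t w_\tau = \Delta_p v_\tau$, I would invoke Aubin--Simon to extract a sequence $\tau_j\to 0$ and limits $w\in C_{\text{loc}}([0,\infty);W^{-1,q}(\Omega))$, $v\in L^p_{\text{loc}}([0,\infty);W^{1,p}_0(\Omega))\cap L^\infty([0,\infty);L^p(\Omega))$, and $\xi\in L^q_{\text{loc}}(\Omega\times[0,\infty);\R^n)$ such that $w_{\tau_j}\to w$ in $C_{\text{loc}}([0,\infty);W^{-1,q}(\Omega))$, $\partial_t w_{\tau_j}\rightharpoonup \partial_t w$, $v_{\tau_j}\rightharpoonup v$ in $L^p_{\text{loc}}([0,\infty);W^{1,p}_0(\Omega))$, and $|Dv_{\tau_j}|^{p-2}Dv_{\tau_j}\rightharpoonup\xi$ in $L^q_{\text{loc}}$. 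The distributional identity $\partial_t w = \mathrm{div}\,\xi$ holds on $\Omega\times(0,\infty)$.

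The main obstacle, and the only genuinely new point compared with Proposition \ref{CompactTrudinger}, is that here $w_\tau$ is the piecewise affine interpolant of the nodal values ${\cal J}_p(u^k)=|v_\tau(\cdot,k\tau)|^{p-2}v_\tau(\cdot,k\tau)$ while $v_\tau$ is piecewise constant, so $w_\tau\neq |v_\tau|^{p-2}v_\tau$ off the grid. A direct computation on $((k-1)\tau,k\tau)$ gives $w_\tau-|v_\tau|^{p-2}v_\tau = (t-k\tau)\partial_t w_\tau$, whence
$$
\|w_\tau-|v_\tau|^{p-2}v_\tau\|_{L^q_{\text{loc}}([0,\infty);W^{-1,q}(\Omega))}\le \tau\,\|\partial_t w_\tau\|_{L^q_{\text{loc}}([0,\infty);W^{-1,q}(\Omega))}\longrightarrow 0.
$$
Consequently $|v_{\tau_j}|^{p-2}v_{\tau_j}\to w$ in $L^q_{\text{loc}}([0,\infty);W^{-1,q}(\Omega))$, and the Minty monotonicity trick used to verify \eqref{wisVtothePminus1} then transfers essentially verbatim: one computes $\lim\int_{t_0}^{t_1}\int_\Omega |v_{\tau_j}|^p\,dx\,dt = \int_{t_0}^{t_1}\langle w(\cdot,t),v(\cdot,t)\rangle\,dt$ by weak--strong pairing and tests the monotonicity of $z\mapsto{\cal J}_p(z)$ against $v-\tau\phi$ with $\phi\in C^\infty_c$ to conclude $w=|v|^{p-2}v$.

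With $w=|v|^{p-2}v$ identified, the remaining identifications proceed exactly as in Proposition \ref{CompactTrudinger}: one extracts pointwise $L^p$ convergence $v_{\tau_j}(\cdot,t)\to v(\cdot,t)$ at a.e.\ $t$, uses the discrete energy identity to obtain the strong $L^p_{\text{loc}}([0,\infty);W^{1,p}_0(\Omega))$ convergence, applies a second Minty argument to deduce $\xi=|Dv|^{p-2}Dv$, and infers convergence of $\partial_t w_{\tau_j}$ from the isometry \eqref{pLapIsometry}. The initial condition $v(\cdot,0)=g$ follows from $w_{\tau_j}(\cdot,0)={\cal J}_p(g)$ and the $C_{\text{loc}}$ convergence of $w_{\tau_j}$ in $W^{-1,q}(\Omega)$. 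Finally, the monotonicity \eqref{W1pMonForOneWS} is inherited from \eqref{preW1pMonForOneWS} by passing to a further subsequence along which $\int_\Omega|Dv_{\tau_j}(x,t)|^p\,dx\to\int_\Omega|Dv(x,t)|^p\,dx$ for almost every $t>0$: the pointwise inequality passes directly to the limit.
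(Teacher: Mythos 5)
Your proposal is correct and is exactly the argument the paper intends by the phrase "ideas very similar to those used to prove Proposition \ref{CompactTrudinger}"; the paper does not write the proof out. The one step that is not literally in Proposition \ref{CompactTrudinger}'s proof — reconciling the piecewise-affine interpolant $w_\tau$ with $|v_\tau|^{p-2}v_\tau$ via the bound $\|w_\tau-|v_\tau|^{p-2}v_\tau\|_{L^q_{\text{loc}};W^{-1,q}}\le\tau\|\partial_t w_\tau\|_{L^q_{\text{loc}};W^{-1,q}}\to 0$ — you supply correctly, and the rest (Minty identification of $w$ and $\xi$, strong $W^{1,p}_0$ convergence from the discrete energy inequality together with weak lower semicontinuity, passage of \eqref{preW1pMonForOneWS} to \eqref{W1pMonForOneWS} along an a.e.-pointwise convergent subsequence, and $v(\cdot,0)=g$ from the $C_{\text{loc}}$ convergence of $w_{\tau_j}$) is as in the paper.
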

A useful estimate for our large time behavior considerations is as follows. 
\begin{cor}\label{ConvexAndW1pMonLemma}
Assume $v$ is a weak solution of \eqref{TruIVP} that satisfies \eqref{W1pMonForOneWS}. Then $t\mapsto \int_\Omega|v(x,t)|^pdx$ is 
convex and for $h>0$,
\begin{equation}\label{W1pBoundForOneWS}
\int_\Omega|Dv(x,t)|^pdx\le\left(\frac{p-1}{p}\right)\frac{e^{-p\left(\frac{\lambda_p}{p-1}\right)(t-h)}}{h}\int_\Omega|g|^pdx,\quad t\ge h.
\end{equation}
\end{cor}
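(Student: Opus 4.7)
The plan is to derive both conclusions directly from Lemma \ref{BasicMonLemma}, the monotonicity hypothesis \eqref{W1pMonForOneWS}, and the exponential $L^p$-decay supplied by Corollary \ref{MonEthenLp}.

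For the convexity statement, I would set $F(t):=\int_\Omega|v(x,t)|^pdx$. Lemma \ref{BasicMonLemma} ensures that $F$ is locally absolutely continuous on $[0,\infty)$ with
$$
F'(t)=-\frac{p}{p-1}\int_\Omega|Dv(x,t)|^pdx\quad\text{a.e.}\;t>0.
$$
Hypothesis \eqref{W1pMonForOneWS} says that $t\mapsto \int_\Omega|Dv(x,t)|^pdx$ agrees almost everywhere with a nonincreasing function, so $F'$ is nondecreasing almost everywhere. Since $F$ is absolutely continuous, this forces $F$ to be convex on $[0,\infty)$.

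For the decay estimate, the key is to convert the monotonicity of $\int_\Omega|Dv(x,\cdot)|^pdx$ into a mean-value bound over an interval of length $h$ ending at $t$. Fix $t\ge h>0$; applying \eqref{W1pMonForOneWS} at a.e.\ $s\in[t-h,t]$ and integrating gives
$$
h\int_\Omega|Dv(x,t)|^pdx\le \int^{t}_{t-h}\int_\Omega|Dv(x,s)|^pdxds.
$$
I would then integrate the identity from Lemma \ref{BasicMonLemma} over $[t-h,t]$ to rewrite the right-hand side as $\frac{p-1}{p}[F(t-h)-F(t)]\le \frac{p-1}{p}F(t-h)$. Corollary \ref{MonEthenLp} bounds $F(t-h)\le e^{-p\left(\frac{\lambda_p}{p-1}\right)(t-h)}\int_\Omega|g|^pdx$, and dividing by $h$ yields exactly \eqref{W1pBoundForOneWS}.

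There is essentially no obstacle here: the one point to notice is that the a.e.\ nonincreasing representative of $s\mapsto \int_\Omega|Dv(x,s)|^pdx$ provided by \eqref{W1pMonForOneWS} coincides up to a null set with the integrand appearing in $F'$, which is harmless since every estimate is phrased as an integral over an interval of positive length. The main role of \eqref{W1pMonForOneWS} is precisely to allow pulling $\int_\Omega|Dv(x,t)|^pdx$ out of the time average in the first displayed inequality.
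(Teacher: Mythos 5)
Your two-step plan matches the paper's strategy: both derive convexity of $F(t):=\int_\Omega|v(x,t)|^pdx$ from Lemma~\ref{BasicMonLemma} together with \eqref{W1pMonForOneWS}, and both convert that structure plus the exponential decay from Corollary~\ref{MonEthenLp} into the bound \eqref{W1pBoundForOneWS}. Your mean-value formulation
\[
h\int_\Omega|Dv(x,t)|^pdx\le \int^{t}_{t-h}\int_\Omega|Dv(x,s)|^pdxds
=\frac{p-1}{p}\bigl[F(t-h)-F(t)\bigr]\le\frac{p-1}{p}F(t-h)
\]
is algebraically the same inequality the paper obtains from the tangent-line characterization of convexity, so that part of the argument is fine.

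However, there is a genuine gap in the last stage. The hypothesis \eqref{W1pMonForOneWS} holds only for \emph{almost every} pair $(t,s)$ with $t\ge s$, so for a fixed but arbitrary $t\ge h$ you cannot conclude that $\int_\Omega|Dv(x,t)|^pdx\le\int_\Omega|Dv(x,s)|^pdx$ holds for a.e.\ $s\in[t-h,t]$; Fubini only gives this for a.e.\ $t$. Consequently your derivation establishes \eqref{W1pBoundForOneWS} for a.e.\ $t\ge h$ rather than for every $t\ge h$. Your closing remark that the a.e.\ ambiguity is ``harmless since every estimate is phrased as an integral over an interval of positive length'' does not address this, because the statement being proved is a pointwise bound on $\int_\Omega|Dv(x,t)|^pdx$ at each $t$ (and this everywhere version is what is actually used later, e.g.\ in the proof of Theorem~\ref{TruThm1}(ii) to get boundedness of $(v^k(\cdot,t))_k$ in $W^{1,p}_0(\Omega)$ for each fixed $t$). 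The paper fills this gap explicitly: given $t_0\ge h$, one chooses $t_m\downarrow t_0$ at which the a.e.\ bound holds, uses $v(\cdot,t_m)\to v(\cdot,t_0)$ in $L^p(\Omega)$ (Corollary~\ref{UnifContLp}) to deduce $v(\cdot,t_m)\rightharpoonup v(\cdot,t_0)$ in $W^{1,p}_0(\Omega)$, and then invokes weak lower semicontinuity of $\|\cdot\|_{W^{1,p}_0(\Omega)}$ to pass the bound to $t_0$. You should append this approximation step to your argument.
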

\begin{proof}
The convexity assertion follows directly from \eqref{LpMon} and \eqref{W1pMonForOneWS}, so we will focus on establishing \eqref{W1pBoundForOneWS}.  For almost every $t\in (h,\infty)$, we have 
\begin{align*}
\int_\Omega|v(x,t-h)|^pdx &\ge \int_\Omega|v(x,t)|^pdx +\frac{d}{dt}\left(\int_\Omega|v(x,t)|^pdx\right)((t-h)-t) \\
&=\int_\Omega|v(x,t)|^pdx +\left(-\frac{p}{p-1}\int_\Omega|Dv(x,t)|^pdx\right)\cdot (-h)\\
&\ge h \frac{p}{p-1}\int_\Omega|Dv(x,t)|^pdx.
\end{align*}
By Corollary \ref{MonEthenLp}, 
\begin{align}\label{AEexpBound}
\int_\Omega|Dv(x,t)|^pdx&\le \left(\frac{p-1}{p}\right)\frac{1}{h}\int_\Omega|v(x,t-h)|^pdx \nonumber \\
& \le \left(\frac{p-1}{p}\right)\frac{e^{-p\left(\frac{\lambda_p}{p-1}\right)(t-h)}}{h}\int_\Omega|g|^pdx .
\end{align}
\par Let $t_0\ge h$ and select $t_m>h$ that converges to $t_0$ as $m\rightarrow\infty$, such that \eqref{AEexpBound} holds for each $t=t_m$. 
As $v(\cdot,t_m)\rightarrow v(\cdot, t_0)$ in $L^p(\Omega)$, it must be that $v(\cdot,t_m)\rightharpoonup v(\cdot, t_0)$ in $W^{1,p}_0(\Omega)$. 
Then weak convergence gives
$$
\int_\Omega|Dv(x,t_0)|^pdx\le \liminf_{m\rightarrow\infty}\int_\Omega|Dv(x,t_m)|^pdx \le \left(\frac{p-1}{p}\right)\frac{e^{-p\left(\frac{\lambda_p}{p-1}\right)(t_0-h)}}{h}\int_\Omega|g|^pdx,
$$
as claimed. 
\end{proof}

\section{Large time behavior}\label{LargeTimeSect}
We now set out to prove Theorem \ref{TruThm1}.  To this end, we will establish the following technical lemma. This result is important as it will help us identify the sign of various extremal functions that we will encounter when studying the large time limits of solutions of \eqref{TruIVP}.

\begin{lem}
Let $\ell,C>0$. There is $\delta=\delta(\ell,C)>0$ with the following property. Assume $v$ is a weak solution of \eqref{TruIVP} that satisfies

\begin{enumerate}[(i)]

\item $\ell\le \displaystyle\int_\Omega|v(x,0)|^pdx\le C$,

\item $\displaystyle\int_\Omega|v^+(x,0)|^pdx\ge \frac{1}{2}\ell$, and

\item 
$$
\frac{\||v(\cdot,0)|^{p-2}v(\cdot,0)\|^q_{L^{q}(\Omega)}}{\displaystyle\||v(\cdot,0)|^{p-2}v(\cdot,0)\|^q_{W^{-1,q}(\Omega)}}<\mu_p +\delta.
$$
\end{enumerate}
Then
$$
e^{p\left(\frac{\lambda_p}{p-1}\right)t}\int_\Omega|v^+(x,t)|^pdx\ge \frac{1}{2}\ell
$$
for $0\le t\le 1.$
\end{lem}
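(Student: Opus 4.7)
The plan is a compactness-and-contradiction argument. Suppose no such $\delta$ works; then for each $k\in\N$ there is a weak solution $v^k$ with initial datum $g^k:=v^k(\cdot,0)$ satisfying (i), (ii), and the quotient bound in (iii) with $\delta=1/k$, together with a time $t_k\in[0,1]$ at which
$$
e^{p(\lambda_p/(p-1))t_k}\int_\Omega|v^{k,+}(x,t_k)|^p\,dx<\tfrac{1}{2}\ell.
$$
By (i), $(g^k)$ is bounded in $L^p(\Omega)$, so Proposition~\ref{CompactTrudinger} produces a subsequence (still indexed by $k$) converging to a weak solution $v$ of \eqref{TruIVP} with $v^k\to v$ in $C_{\mathrm{loc}}([0,\infty);L^p(\Omega))$ and $|g^k|^{p-2}g^k\rightharpoonup|g|^{p-2}g$ weakly in $L^q(\Omega)$, where $g:=v(\cdot,0)$; pass to a further subsequence with $t_k\to t_0\in[0,1]$.

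The crux is to identify $g$ as an extremal of \eqref{Poincare} and simultaneously upgrade to strong $L^p$-convergence of the initial data. The compact embedding $L^q(\Omega)\hookrightarrow W^{-1,q}(\Omega)$ yields $\||g^k|^{p-2}g^k\|_{W^{-1,q}}\to \||g|^{p-2}g\|_{W^{-1,q}}$. Combining (iii) with the dual Poincar\'e inequality \eqref{dualPoincare} and weak lower semicontinuity of the $L^q$-norm gives
$$
\|g\|_{L^p}^p\le\liminf_k\|g^k\|_{L^p}^p\le\limsup_k\|g^k\|_{L^p}^p\le\mu_p\||g|^{p-2}g\|_{W^{-1,q}}^q\le\|g\|_{L^p}^p,
$$
so all inequalities are equalities. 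Thus $|g|^{p-2}g$ is extremal for \eqref{dualPoincare}, hence $g$ is extremal for \eqref{Poincare} by Appendix~\ref{AppDPI}, and $\|g^k\|_{L^p}\to\|g\|_{L^p}$. Uniform convexity of $L^q$ upgrades the weak convergence of $|g^k|^{p-2}g^k$ to strong $L^q$-convergence, and the inverse homeomorphism $w\mapsto|w|^{q-2}w:L^q\to L^p$ gives $g^k\to g$ strongly in $L^p(\Omega)$. Corollary~\ref{OnlyUniqueness} then delivers separation of variables: $v(x,t)=e^{-(\lambda_p/(p-1))t}g(x)$ for all $t\ge 0$.

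Now exploit sign-definiteness. Since the first Dirichlet eigenvalue of $-\Delta_p$ is simple with a positive first eigenfunction $u_1$, any extremal of \eqref{Poincare} is of the form $cu_1$ for some $c\in\R$, so $g$ has definite sign. Passing (ii) to the limit using $g^k\to g$ in $L^p$ (and $1$-Lipschitz continuity of $u\mapsto u^+$) gives $\int_\Omega|g^+|^p\,dx\ge\ell/2>0$, which forces $g\ge 0$; hence $v^+(\cdot,t)=v(\cdot,t)$. By (i) and the strong $L^p$-convergence at $t=0$, $\int_\Omega|g|^p\,dx\ge\ell$. Passing the violation inequality to the limit using $v^k(\cdot,t_k)\to v(\cdot,t_0)$ in $L^p(\Omega)$ and separation of variables,
$$
\tfrac{1}{2}\ell\ge\lim_k e^{p(\lambda_p/(p-1))t_k}\int_\Omega|v^{k,+}(x,t_k)|^p\,dx=e^{p(\lambda_p/(p-1))t_0}\int_\Omega|v(x,t_0)|^p\,dx=\int_\Omega|g|^p\,dx\ge\ell,
$$
contradicting $\ell>0$.

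The main obstacle is the simultaneous upgrade of the weak $L^q$-convergence of $|g^k|^{p-2}g^k$ to strong $L^p$-convergence of $g^k$ and the identification of $g$ as an extremal; hypothesis (iii) is precisely what makes this possible, via the squeeze between the dual Poincar\'e inequality and weak lower semicontinuity. Once this is in place, simplicity of the first Dirichlet $p$-eigenvalue forces a sign on $g$ compatible with (ii), and separation of variables from Corollary~\ref{OnlyUniqueness} closes the argument.
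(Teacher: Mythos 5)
Your compactness-and-contradiction argument mirrors the paper's own proof: both extract a convergent subsequence via Proposition~\ref{CompactTrudinger}, identify the limit initial datum as a Poincar\'e extremal from the quotient hypothesis, invoke Corollary~\ref{OnlyUniqueness} for separation of variables, and then use sign-definiteness of extremals together with (i), (ii), and the violated inequality to force the contradiction $\tfrac{1}{2}\ell\ge\ell$. The only difference is cosmetic: you supply an explicit squeeze argument (dual Poincar\'e plus weak lower semicontinuity plus uniform convexity) to upgrade the initial data to strong $L^p$-convergence and to identify the extremal, a step the paper states more tersely by leaning on the $C_{\mathrm{loc}}([0,\infty);L^p)$ convergence it has already established.
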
 
\begin{rem}
By replacing $v$ with $-v$, the lemma also holds with $v^-$ replacing $v^+$.
\end{rem}
\begin{proof}
Suppose the assertion is false. Then there are $\ell,C>0$ and weak solutions $v_j$ of \eqref{TruIVP} that satisfy 

\begin{enumerate}[$(i)$]

\item $\ell\le \displaystyle\int_\Omega|v_j(x,0)|^pdx\le C$,

\item $\displaystyle\int_\Omega|v_j^+(x,0)|^pdx\ge \frac{1}{2}\ell$, and

\item 
$$
\frac{\||v_j(\cdot,0)|^{p-2}v_j(\cdot,0)\|^q_{L^{q}(\Omega)}}{\displaystyle\||v_j(\cdot,0)|^{p-2}v_j(\cdot,0)\|^q_{W^{-1,q}(\Omega)}}<\mu_p +\frac{1}{j},
$$
\end{enumerate}
while 
\begin{equation}\label{teeJayContra}
e^{p\left(\frac{\lambda_p}{p-1}\right)t_j}\int_\Omega|v_j^+(x,t_j)|^pdx< \frac{1}{2}\ell.
\end{equation}
for some $t_j\in [0,1]$.   Without any loss of generality, we may also assume that the sequence $(t_j)_{j\in \N}$ is convergent. In view of $(i)$, we can appeal to Proposition \ref{CompactTrudinger} to find a subsequence $(v_{j_k})_{k\in\N}$ and 
weak solution $v$ of \eqref{TruIVP} for which $v_{j_k}\rightarrow v$ in $C([0,1]; L^p(\Omega))$. 

\par From $(iii)$, $v(\cdot,0)$ is necessarily an extremal $u$ for the Poincar\'e inequality \eqref{Poincare}. By Corollary \ref{OnlyUniqueness}, 
$v(x,t)=e^{-(\lambda_p/(p-1))t}u$. Passing to the limit as $j=j_k\rightarrow\infty$ in 
$(i)$, $(ii)$ and  \eqref{teeJayContra} give
$$
\ell\le \int_\Omega|u|^pdx\quad \text{and}\quad \int_\Omega|u^+|^pdx= \frac{1}{2}\ell.
$$
Since every extremal of Poincar\'e's inequality \eqref{Poincare} does not change its sign in $\Omega$, it must be that $u>0$. It follows that $u=u^+$ and
$$
\frac{1}{2}\ell=\int_\Omega|u^+|^pdx =\int_\Omega|u|^pdx\ge \ell.
$$
This contradiction establishes the claim.
\end{proof}

\begin{cor}\label{positivityCorollary}
Let $\ell, C>0$ and select $\delta=\delta(\ell,C)$ from the previous lemma.  Suppose that $v$ is a weak solution of \eqref{TruIVP} which satisfies

\begin{enumerate}[(i)]

\item $\displaystyle e^{p\left(\frac{\lambda_p}{p-1}\right)t}\int_\Omega|v(x,t)|^pdx\ge \ell\;$   for   $\;t\ge 0$,

\item $\displaystyle\int_\Omega|v(x,0)|^pdx\le C$,

\item $\displaystyle\int_\Omega|v^+(x,0)|^pdx\ge \frac{1}{2}\ell$, and

\item 
$$
\frac{\||v(\cdot,0)|^{p-2}v(\cdot,0)\|^q_{L^{q}(\Omega)}}{\displaystyle\||v(\cdot,0)|^{p-2}v(\cdot,0)\|^q_{W^{-1,q}(\Omega)}}<\mu_p +\delta.
$$
\end{enumerate}
Then
\begin{equation}\label{vPlusLuhLowerBound}
e^{p\left(\frac{\lambda_p}{p-1}\right)t}\int_\Omega|v^+(x,t)|^pdx\ge \frac{1}{2}\ell
\end{equation}
for $t\ge 0$.
\end{cor}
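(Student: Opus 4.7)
The plan is to iterate the previous lemma on successive unit intervals, exploiting the scaling and time-translation invariance of Trudinger's equation. Fix any $t_0\ge 0$ and set
$$
\tilde v(x,t) := e^{(\lambda_p/(p-1))t_0}\, v(x,t+t_0).
$$
Because \eqref{TruEqn} is homogeneous, $\tilde v$ is again a weak solution of \eqref{TruIVP} with initial value $e^{(\lambda_p/(p-1))t_0}v(\cdot,t_0)$. The exponential prefactor is chosen precisely so that the conclusion of the lemma applied to $\tilde v$, namely $e^{p(\lambda_p/(p-1))s}\int_\Omega|\tilde v^+(x,s)|^p\,dx\ge \tfrac{1}{2}\ell$ for $s\in[0,1]$, unravels into \eqref{vPlusLuhLowerBound} for $v$ on the shifted interval $[t_0,t_0+1]$.

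I would then proceed by induction on $n\in\{0,1,2,\dots\}$, proving that \eqref{vPlusLuhLowerBound} holds on $[0,n+1]$. The base case $n=0$ is the previous lemma applied to $v$ itself, since the corollary's hypotheses (ii), (iii), (iv) are exactly the lemma's hypotheses. For the inductive step, assuming the bound on $[0,n]$, I would apply the lemma to $\tilde v$ with $t_0=n$ and verify its three hypotheses at $s=0$.

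The mass bound $\ell\le \int_\Omega|\tilde v(x,0)|^p\,dx\le C$ follows from
$$
\int_\Omega|\tilde v(x,0)|^p\,dx \;=\; e^{p(\lambda_p/(p-1))n}\int_\Omega|v(x,n)|^p\,dx,
$$
combined with the corollary's hypothesis (i) for the lower bound and Corollary \ref{MonEthenLp} together with hypothesis (ii) for the upper bound. The positive-part bound $\int_\Omega|\tilde v^+(x,0)|^p\,dx\ge \tfrac{1}{2}\ell$ is the inductive hypothesis evaluated at $t=n$, using that the positive part scales linearly under multiplication by the positive constant $e^{(\lambda_p/(p-1))n}$. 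The ratio condition is the most delicate: both $\||\cdot|^{p-2}\cdot\|_{L^q}^q$ and $\||\cdot|^{p-2}\cdot\|_{W^{-1,q}}^q$ scale by the same factor $e^{p(\lambda_p/(p-1))n}$ under multiplication of $v$ by that same constant, so the ratio at $s=0$ for $\tilde v$ equals the ratio at $t=n$ for $v$. By Proposition \ref{WeakQuotientMon}, which applies because hypothesis (i) forces $v(\cdot,n)\not\equiv 0$, this quantity is nonincreasing in $t$, hence at most its value at $t=0$, which is $<\mu_p+\delta$ by hypothesis (iv).

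The main obstacle is purely bookkeeping: making sure that the scaling exponent $(\lambda_p/(p-1))n$ in $\tilde v$ is precisely the one that (a) matches the prefactor $e^{p(\lambda_p/(p-1))t}$ in the target inequality, (b) combines cleanly with Corollary \ref{MonEthenLp} to deliver the upper mass bound $C$, and (c) cancels in numerator and denominator of the dual ratio. Once those exponents are aligned, no analytic input beyond the previous lemma, Proposition \ref{WeakQuotientMon}, and Corollary \ref{MonEthenLp} is needed.
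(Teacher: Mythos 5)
Your proposal is correct and follows essentially the same route as the paper: apply the lemma on $[0,1]$, then rescale and time-shift by integer amounts (the paper's $v^k$ is exactly your $\tilde v$ with $t_0=k$), verifying the hypotheses on each new unit interval via Corollary \ref{MonEthenLp}, the inductive positive-part bound, and the monotonicity from Proposition \ref{WeakQuotientMon} together with the scale-invariance of the dual ratio. The paper leaves the inductive bookkeeping to the reader; you have simply written it out.
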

\begin{rem}
As remarked above, we can replace $v$ with $-v$ to obtain an analogous statement for $v^-$.
\end{rem}
\begin{proof}
As $v$ satisfies hypotheses $(i), (ii),$ and $(iii)$ of the previous lemma,  \eqref{vPlusLuhLowerBound} holds for $t\in [0,1]$. Now define 
$$
v^1(x,t):=e^{\left(\frac{\lambda_p}{p-1}\right)}v(x,t+1), \quad (x,t)\in \Omega\times[0,\infty).
$$
By Corollary \ref{MonEthenLp}, 
$$
\int_\Omega|v^1(x,0)|^pdx=e^{p\left(\frac{\lambda_p}{p-1}\right)\cdot 1}\int_\Omega|v(x,1)|^pdx\le \int_\Omega|v(x,0)|^pdx\le C;
$$
and by Proposition \ref{WeakQuotientMon}, 
$$
\frac{\||v^1(\cdot,0)|^{p-2}v^1(\cdot,0)\|^q_{L^{q}(\Omega)}}{\displaystyle\||v^1(\cdot,0)|^{p-2}v^1(\cdot,0)\|^q_{W^{-1,q}(\Omega)}}\le
\frac{\||v(\cdot,0)|^{p-2}v(\cdot,0)\|^q_{L^{q}(\Omega)}}{\displaystyle\||v(\cdot,0)|^{p-2}v(\cdot,0)\|^q_{W^{-1,q}(\Omega)}}<\mu_p +\delta.
$$
Therefore, $v^1$ satisfies \eqref{vPlusLuhLowerBound} for $t\in [0,1]$ and consequently, $v$ satisfies \eqref{vPlusLuhLowerBound} holds for each $t\in [1,2]$.

\par Next we set
$$
v^k(x,t):=e^{\left(\frac{\lambda_p}{p-1}\right)k}v(x,t+k), \quad (x,t)\in \Omega\times[0,\infty)
$$
for $k\in \N$, $k\ge 2$. Observe that each $v^k$ is a weak solution of \eqref{TruIVP}. Using the argument above, it is straightforward to use mathematical induction to show that $v$ satisfies \eqref{vPlusLuhLowerBound} for $t$ belonging to the intervals $[k,k+1]$ for all $k\in \N$. We leave the details to the reader. 
\end{proof}
We now proceed to proving Theorem \ref{TruThm1}.

\begin{proof}[Proof of Theorem \ref{TruThm1}(i)]
Assume $v$ is a weak solution of \eqref{TruIVP} and set 
\begin{equation}\label{SLimit}
S:=\lim_{\tau\rightarrow\infty}e^{p\left(\frac{\lambda_p}{p-1}\right)\tau}\int_\Omega|v(x,\tau)|^pdx.
\end{equation}
Recall that this limit exists by Corollary \ref{MonEthenLp}. If $S=0$, then 
$\lim_{t\rightarrow \infty}e^{(\lambda_p/(p-1))t}v(\cdot,t)=0\in L^p(\Omega)$ and we conclude. So let us now suppose that $S>0$.  

\par Suppose $(s^k)_{k\in \N}$ is a sequence of positive numbers tending to $+\infty$ and define 
\begin{equation}\label{VeeKayDefn}
v^k(x,t):=e^{\left(\frac{\lambda_p}{p-1}\right)s_k}v(x,t+s_k), \quad (x,t)\in \Omega\times[0,\infty).
\end{equation}
Clearly, $v^k$ is a weak solution of \eqref{TruIVP} for each $k\in \N$. Moreover, 
$$
\int_\Omega|v^k(x,0)|^pdx=e^{p\left(\frac{\lambda_p}{p-1}\right)s_k}\int_\Omega|v(x,s_k)|^pdx\le \int_\Omega|g|^pdx.
$$
By Proposition \ref{CompactTrudinger}, there is a subsequence $(v^{k_j})_{j\in \N}$ and weak solution $v^\infty$ such that 
\begin{equation}\label{CompactApplicationLargeTimeProof}
v^{k_j}\rightarrow v^\infty\;\text{in}\; 
\begin{cases}
C_{\text{loc}}([0,\infty); L^p(\Omega))\\
L^p_{\text{loc}}([0,\infty); W^{1,p}_0(\Omega))
\end{cases}
\end{equation}
as $j\rightarrow\infty$. 

\par By \eqref{SLimit}, we have
\begin{align*}
S&=\lim_{j\rightarrow\infty}e^{p\left(\frac{\lambda_p}{p-1}\right)(t+s_{k_j})}\int_\Omega|v(x,t+s_{k_j})|^pdx\\
&=e^{p\left(\frac{\lambda_p}{p-1}\right)t}\lim_{j\rightarrow\infty}\int_\Omega|v^{k_j}(x,t)|^pdx\\
&=e^{p\left(\frac{\lambda_p}{p-1}\right)t}\int_\Omega|v^{\infty}(x,t)|^pdx
\end{align*}
for all $t> 0$.  Differentiating this equation in time (as in Lemma \ref{BasicMonLemma}) leads to 
\begin{equation}\label{vInfGroundState}
0=\left(\frac{p}{p-1}\right)e^{p\left(\frac{\lambda_p}{p-1}\right)t}\left(\lambda_p\int_\Omega|v^{\infty}(x,t)|^pdx- \int_\Omega|Dv^{\infty}(x,t)|^pdx \right)
\end{equation}
for almost every $t\ge 0$. As in our proof of Corollary \ref{ConvexAndW1pMonLemma}, \eqref{vInfGroundState} actually holds for every $t\ge 0$. 

\par In particular, 
$$
u:=\lim_{j\rightarrow\infty}e^{\left(\frac{\lambda_p}{p-1}\right)s_{k_j}}v(\cdot,s_{k_j})
$$
in $L^p(\Omega)$ for an extremal $u$ which satisfies $S=\int_\Omega|u|^pdx$. As the collection of extremals of the Poincar\'e inequality \eqref{Poincare} is one dimensional \cite{Saka}, $u$ is completely determined up to its sign.  We also have by Proposition \ref{WeakQuotientMon} that 
\begin{align*}
\lim_{t\rightarrow\infty}\frac{\||v(\cdot,t)|^{p-2}v(\cdot,t)\|^q_{L^{q}(\Omega)}}{\displaystyle\||v(\cdot,t)|^{p-2}v(\cdot,t)\|^q_{W^{-1,q}(\Omega)}}&=
\lim_{j\rightarrow\infty}\frac{\||v(\cdot,s_{k_j})|^{p-2}v(\cdot,s_{k_j})\|^q_{L^{q}(\Omega)}}{\displaystyle\||v(\cdot,s_{k_j})|^{p-2}v(\cdot,s_{k_j})\|^q_{W^{-1,q}(\Omega)}}\\
&=\frac{\||u|^{p-2}u\|^q_{L^{q}(\Omega)}}{\displaystyle\||u|^{p-2}u\|^q_{W^{-1,q}(\Omega)}}\\
&=\mu_p.
\end{align*}

\par As previously mentioned, it must be that either $u>0$ or $u<0$ in $\Omega$. Without loss of generality, we will suppose that $u>0$. Set $\ell:=S>0$, $C:=\int_\Omega|g|^pdx$ and choose $\delta=\delta(\ell,C)$ as in Corollary \ref{positivityCorollary}.  From our analysis above, there 
exist a $j^*\in \N$ such that $v^{k_j}$ satisfies hypotheses $(ii)-(iv)$ in Corollary \ref{positivityCorollary} for each $j\in \N$ with $j\ge j^*$. To verify hypothesis $(i)$, we only need to recall that $S$ is the infimum of $e^{p\left(\lambda_p/(p-1)\right)\tau}\int_\Omega|v(x,\tau)|^pdx$ over $\tau>0$. Therefore, 
$$
e^{p\left(\frac{\lambda_p}{p-1}\right)t}\int_\Omega|v^k(x,t)|^pdx\ge S
$$
for every $k\in \N$ and $t\ge 0$. It then follows that 
\begin{equation}\label{vKayJayPositivitiy}
e^{p\left(\frac{\lambda_p}{p-1}\right)(t+s_{k_j})}\int_\Omega|v^+(x,t+s_{k_j})|^pdx\ge \frac{1}{2}S
\end{equation}
for every $t\ge 0$ and each $j\ge j^*$.

\par Now suppose there is another sequence of positive number $(t_m)_{m\in \N}$ that increase to $+\infty$ for which 
$$
\lim_{m\rightarrow \infty}e^{\left(\frac{\lambda_p}{p-1}\right)t_m}v(\cdot,t_m)=-u
$$
in $L^p(\Omega)$.  Select a subsequence $(t_{m_j})_{j\in \N}$ such that $t_{m_j}>s_{k_j}$ for all $j\in \N$.  Substituting $t=t_{m_j}-s_{k_j}$ in \eqref{vKayJayPositivitiy} gives
$$
e^{p\left(\frac{\lambda_p}{p-1}\right)t_{m_j}}\int_\Omega|v^+(x,t_{m_j})|^pdx\ge \frac{1}{2}S.
$$
Sending $j\rightarrow\infty$ leads to 
$$
\int_\Omega|(-u)^+|^pdx\ge \frac{1}{2}S,
$$
which is a contradiction to $u$ being a positive function.  Finally, as $S$ is independent of the sequence $(s_k)_{k\in \N}$, the full limit $\lim_{t\rightarrow \infty}e^{\left(\lambda_p/(p-1)\right)t}v(\cdot,t)=u$ exists in $L^p(\Omega)$.
\end{proof}
\begin{proof}[Proof of Theorem \ref{TruThm1}(ii)]
Assume $v$ is a weak solution of \eqref{TruIVP} that satisfies \eqref{W1pMonForOneWS}.  Let $(s^k)_{k\in \N}$ be a sequence of positive numbers tending to $\infty$ and define $v^k$ by \eqref{VeeKayDefn} for each $k\in \N$. By part $(i)$ of this proof, we have that 
\begin{equation}\label{vkStrongLim}
\lim_{k\rightarrow \infty}v^k(\cdot,t)=e^{-\left(\frac{\lambda_p}{p-1}\right)t}u
\end{equation}
exists in $L^p(\Omega)$ for each time $t\ge 0$; here $u$ is an extremal of the Poincar\'e inequality \eqref{Poincare}.  Applying \eqref{W1pBoundForOneWS} to $v^k$, we see that 
$(v^k(\cdot,t))_{k\in \N}$ is bounded in $W^{1,p}_0(\Omega)$ for each $t\ge 0$. Therefore, \eqref{vkStrongLim} holds weakly in $W^{1,p}_0(\Omega)$ for all $t\ge 0$. 

\par By \eqref{CompactApplicationLargeTimeProof}, we also have that \eqref{vkStrongLim} holds strongly in $W^{1,p}_0(\Omega)$ for almost every $t\ge 0$ for a subsequence $(v^{k_j})_{j\in \N}$.  Since $v$ satisfies \eqref{W1pMonForOneWS}, $[0,\infty)\ni t\mapsto \int_\Omega|Dv^k(x,t)|^pdx$ is nonincreasing for each $k\in \N$.  By Helly's Theorem (Lemma 3.3.3 in \cite{AGS}), there is a subsequence (again labeled) $(v^{k_j})_{j\in \N}$ such that the limit 
$$
h(t):=\lim_{j\rightarrow \infty}\int_\Omega|Dv^{k_j}(x,t)|^pdx
$$
holds for every $t\ge 0$.  As noted above, 
$$
h(t)=\int_\Omega|e^{-\left(\frac{\lambda_p}{p-1}\right)t}Du|^pdx\quad\text{a.e.}\; t\ge0
$$
and
$$
h(t)\ge \int_\Omega|e^{-\left(\frac{\lambda_p}{p-1}\right)t}Du|^pdx\quad \text{all}\; t\ge 0.
$$

\par Repeating the steps of part 4 of our proof of Proposition \ref{CompactTrudinger}, we are able to conclude  
$$
\lim_{j\rightarrow \infty}\int_\Omega|Dv^{k_j}(x,t)|^pdx= \int_\Omega|e^{-\left(\frac{\lambda_p}{p-1}\right)t}Du|^pdx
$$
for every $t\ge 0$.  As \eqref{vkStrongLim} holds weakly in $W^{1,p}_0(\Omega)$ at $t=0$, we have 
$$
\lim_{j\rightarrow \infty}e^{\left(\frac{\lambda_p}{p-1}\right)s_{k_j}}v(\cdot,s_{k_j})=u
$$
in $W^{1,p}_0(\Omega)$. Therefore, for every sequence of positive numbers $(s^k)_{k\in \N}$ tending to $\infty$,
$(e^{\left((\lambda_p/p-1)\right)s_{k}}v(\cdot,s_{k}))_{k\in \N}$ has a 
subsequence that converges to $u$. It follows that 
$$
\lim_{t\rightarrow \infty}e^{\left(\frac{\lambda_p}{p-1}\right)t}v(\cdot,t)=u
$$
in $W^{1,p}_0(\Omega)$. 
Finally, if $u$ does not vanish identically, then $v(\cdot,t)$ does not vanish identically for all $t\ge 0$ and
$$
\lim_{t\rightarrow \infty}\frac{\int_\Omega|Dv(x,t)|^pdx}{\int_\Omega|v(x,t)|^pdx}=\frac{\int_\Omega|Du|^pdx}{\int_\Omega|u|^pdx}=\lambda_p.
$$
\end{proof}
Now we will comment briefly on how to rule out degeneracy in the limit described in Theorem \ref{TruThm1}. 
Our remarks will be mostly based on the following observation.  Suppose $f\in L^p(\Omega)$ and 
$u\in W^{1,p}_0(\Omega)$ is a weak solution of the boundary value problem 
\begin{equation}\label{OneStep}
\begin{cases}
\displaystyle\frac{{\cal J}_p(u)- {\cal J}_p(f)}{\tau}=\Delta_pu\quad &\text{in}\;\Omega \\
\hspace{.95in}u=0\quad &\text{on}\;\partial\Omega.
\end{cases}
\end{equation}
Also assume that $\varphi$ is an extremal for the Poincar\'e inequality \eqref{Poincare}. If $f\ge \varphi$, then 
$$
u\ge (1+\lambda_p\tau)^{-\frac{1}{p-1}}\varphi
$$
in $\Omega$. This inequality follows by weak comparison. Indeed, the function $(1+\lambda_p\tau)^{-\frac{1}{p-1}}\varphi$ is a subsolution of the elliptic equation in \eqref{OneStep} and agrees with $u$ on $\partial\Omega$. 

\begin{prop}\label{NondegeneracyProp}
Assume that $v$ is a weak solution of \eqref{TruIVP} as described in Proposition \ref{ExistenceProp}. If 
 $\varphi$ is an extremal for the Poincar\'e inequality \eqref{Poincare} and $g\ge \varphi$, then 
\begin{equation}\label{LowerBoundRescaled}
 v(\cdot,t)\ge e^{-\left(\frac{\lambda_p}{p-1}\right)t}\varphi
\end{equation}
for each $t\ge 0$.
\end{prop}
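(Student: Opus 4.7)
The plan is to exploit the implicit time discretization from Proposition \ref{ExistenceProp} and to propagate the lower bound $g\ge\varphi$ through every step of the scheme via the weak comparison observation stated immediately before the proposition. Set $\beta:=(1+\lambda_p\tau)^{-1/(p-1)}$. Since the eigenvalue PDE \eqref{peeEigenvalue} is $(p-1)$-homogeneous, $c\varphi$ is again an extremal for every $c>0$, so the observation will apply not only to $\varphi$ but to any positive multiple: if $f\ge c\varphi$ and $u$ solves \eqref{OneStep} with datum $f$, then $u\ge\beta\,c\,\varphi$ in $\Omega$.

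Next I would iterate this at every step of the scheme. Let $(u^k)_{k\in\N}$ denote the iterates of \eqref{ImplicitTimeScheme} with $u^0=g$ and step size $\tau>0$. Induction on $k$ then yields $u^k\ge\beta^k\varphi$ in $\Omega$: the base case $k=0$ is the hypothesis $g\ge\varphi$, and each inductive step applies the one-step estimate with $c=\beta^{k-1}$, using the fact that $\beta^{k-1}\varphi$ is itself an extremal. By the definition of $v_\tau$ in \eqref{vTauAndwTau}, this reads
$$
v_\tau(\cdot,t)\ge(1+\lambda_p\tau)^{-k/(p-1)}\varphi\qquad\text{for } t\in((k-1)\tau,k\tau].
$$

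Finally I would pass to the limit along the sequence $\tau_j\to 0$ furnished by Proposition \ref{ExistenceProp}. For fixed $t>0$, setting $k_j:=\lceil t/\tau_j\rceil$ one has $k_j\tau_j\to t$ and the classical limit
$$
(1+\lambda_p\tau_j)^{-k_j/(p-1)}=\exp\!\left(-\tfrac{k_j\log(1+\lambda_p\tau_j)}{p-1}\right)\longrightarrow e^{-\lambda_p t/(p-1)}.
$$
Since $v_{\tau_j}(\cdot,t)\to v(\cdot,t)$ in $L^p(\Omega)$ by Proposition \ref{ExistenceProp}, extracting a further subsequence for pointwise a.e.\ convergence in $\Omega$ and combining with the discrete lower bound yields \eqref{LowerBoundRescaled}. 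The only substantive step I anticipate is the one-step weak comparison, which is essentially what the paragraph preceding the proposition already establishes; the remainder is a routine discrete-to-continuous passage that I do not expect to present any genuine obstacle.
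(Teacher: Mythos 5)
Your proposal is correct and follows essentially the same route as the paper: iterate the one-step comparison from \eqref{OneStep} to get $u^k\ge(1+\lambda_p\tau)^{-k/(p-1)}\varphi$, read this off for $v_\tau$ via \eqref{vTauAndwTau}, then pass to the limit along the sequence $\tau_j\to 0$ supplied by Proposition \ref{ExistenceProp}. The extra remarks you supply — that $c\varphi$ is again extremal by $(p-1)$-homogeneity, and the explicit $\lceil t/\tau_j\rceil$ bookkeeping with the exponential limit — are correct and simply make explicit what the paper leaves implicit.
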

\begin{proof}
Let $(u^k)_{k\in \N}$ be a solution of the implicit time scheme \eqref{ImplicitTimeScheme} with $u^0=g$. Iterating 
\eqref{OneStep}, we find
$$
u^k\ge (1+\lambda_p\tau)^{-\frac{k}{p-1}}\varphi
$$
for each $k\in \N$. Therefore, for $t\in ((k-1)\tau,k\tau]$
\begin{equation}\label{tauLowerBoundest}
v_\tau(\cdot, t)\ge (1+\lambda_p\tau)^{-\frac{k}{p-1}}\varphi.
\end{equation}
Now choose $\tau=\tau_j$ as in Proposition \ref{ExistenceProp} and select $k=k_j\in \N$ so that $t\in ((k_j-1)\tau_j,k_j\tau_j]$. 
With these choices, we can send $j\rightarrow\infty$ in \eqref{tauLowerBoundest} and deduce \eqref{LowerBoundRescaled}.  
\end{proof}
It is now immediate that if the initial condition $g$ in \eqref{TruIVP} is larger than a positive extremal for the Poincar\'e inequality \eqref{Poincare}, 
then the limit described in Theorem \ref{TruThm1} does not vanish identically for a weak solution as described in Proposition \ref{ExistenceProp}.  Likewise, if $g$ is smaller than a negative extremal, there is no
degeneracy.   A simple choice of an initial condition that ensures nondegeneracy is 
$$
g\equiv 1.
$$
As long as $\partial\Omega$ is smooth, any positive extremal $\varphi_0$ for the Poincar\'e inequality \eqref{Poincare} is in fact continuous on $\overline{\Omega}$ \cite{Lin, Saka}.
In particular, there is $\epsilon>0$ such that $\epsilon \varphi_0\le 1$ in $\Omega$. So we can pick $\varphi=\epsilon\varphi_0$, $g\equiv 1$ and produce a weak solution $v$ 
that satisfies \eqref{LowerBoundRescaled}.

\section{Robin boundary condition}\label{RBC}
We will now consider the large time behavior of weak solutions of the initial value problem 
for Trudinger's equation with a Robin boundary condition \eqref{TruRBC}.  Our goal is primarily to explain how our analysis of the initial value problem \eqref{TruIVP} studied in the previous sections carries over in this setting.  Therefore, we will present a streamlined treatment of the initial value problem \eqref{TruRBC}. Throughout this section, we will assume that $\partial\Omega$ is $C^1$ with outward unit normal $\nu$.  

\par In view of the Poincar\'e inequality \eqref{PoincareRBC}, we will equip $W^{1,p}(\Omega)$ with the norm
$$
\|u\|_{W^{1,p}(\Omega)}:=\left(\int_\Omega|Du|^pdx+\beta \int_{\partial\Omega}|Tu|^pd\sigma\right)^{1/p}.
$$
We will also make use of the fact that the following boundary value problem 
\begin{equation}\label{RBCPoissonEqn}
\begin{cases}
\hspace{1.35in}-\Delta_pu=f\quad&\text{in}\;\Omega\\
|Du|^{p-2}Du\cdot \nu +\beta|u|^{p-2}u=0\quad&\text{on}\;\partial\Omega
\end{cases}
\end{equation}
has a unique weak solution for each $f\in (W^{1,p}(\Omega))^*$. Recall that a weak solution of \eqref{RBCPoissonEqn} is defined to be a function $u\in W^{1,p}(\Omega)$ that satisfies
$$
\int_\Omega|Du|^{p-2}Du\cdot D\psi dx+\beta \int_{\partial\Omega}(|Tu|^{p-2}Tu)\;T\psi d\sigma=\langle f,\psi\rangle
$$
for each $\psi\in W^{1,p}(\Omega)$.  

\par It is then natural to consider the operator ${\cal A}_p: W^{1,p}(\Omega)\rightarrow (W^{1,p}(\Omega))^*$ given by
$$
\langle {\cal A}_pu,\psi\rangle:=\int_\Omega|Du|^{p-2}Du\cdot D\psi dx+\beta \int_{\partial\Omega}(|Tu|^{p-2}Tu)\;T\psi d\sigma
$$
($u, \psi\in W^{1,p}(\Omega))$.  Note that $\{{\cal A}_pu\}\subset (W^{1,p}(\Omega))^*$ is the subdifferential of $\frac{1}{p}\|\cdot\|_{W^{1,p}(\Omega)}^p$ at $u$, and so ${\cal A}_p$ is strictly monotone. It is also routine to check that if $u\in C^2(\overline{\Omega})$ satisfies 
$$
|Du|^{p-2}Du\cdot \nu +\beta|u|^{p-2}u=0\quad\text{on}\;\partial\Omega,
$$
then ${\cal A}_pu=-\Delta_pu$.  It turns out that ${\cal A}_p$ can be used to analyze the initial value problem for Trudinger's equation with a Robin boundary condition the same way $-\Delta_p$ defined in \eqref{W1pzeroPlaplace} was employed in our analysis of Trudinger's equation with a Dirichlet boundary condition.

\par Direct computation also leads to the identity
$$
\|{\cal A}_pu\|_{(W^{1,p}(\Omega))^*}=\|u\|_{W^{1,p}(\Omega)}^{p-1} \quad (u\in W^{1,p}(\Omega)).
$$
This identity can be used to derive the following dual Poincar\'e inequality 
\begin{equation}\label{RBCdualPoincare}
\mu_p\|f\|^q_{(W^{1,p}(\Omega))^*}\le \int_\Omega|f|^qdx\quad (f\in L^q(\Omega)),
\end{equation}
where $\mu_p:=\lambda_p^{\frac{1}{p-1}}$ (see Appendix \ref{AppDPI}). Here, and for the rest of this section, $\lambda_p$ is the constant in the Poincar\'e inequality \eqref{PoincareRBC}.

\par Notice that for any smooth solution of \eqref{TruRBC} and $t>0$, we have
\begin{equation}\label{RBCLpGoDown}
\frac{d}{dt}\frac{1}{p}\int_\Omega|v(x,t)|^pdx=-\frac{1}{p-1}\left(
\int_\Omega|Dv(x,t)|^pdx+\beta \int_{\partial\Omega}|(Tv)(x,t)|^pd\sigma\right).
\end{equation}
Therefore, 
$$
\frac{1}{p}\int_\Omega|v(x,t)|^pdx+\frac{1}{p-1}\int^t_0\left(
\int_\Omega|Dv|^pdx+\beta \int_{\partial\Omega}|Tv|^pd\sigma\right)ds
=\frac{1}{p}\int_\Omega|g(x)|^pdx
$$
for each $t\ge0$. These computations motivate the following definition. 
\begin{defn}
Assume $g\in L^p(\Omega)$. A {\it weak solution} of \eqref{TruRBC} is a function $v:\Omega\times[0,\infty)\rightarrow \R$ that satisfies:
$(i)$
$$
v\in L^\infty([0,\infty);L^p(\Omega))\cap L^p([0,\infty); W^{1,p}(\Omega));
$$
$(ii)$
$$
\int^\infty_0\int_\Omega |v|^{p-2}v\psi_tdxdt =\int^\infty_0\int_\Omega |Dv|^{p-2}Dv\cdot D\psi dxdt + \beta \int^\infty_0\int_{\partial\Omega} |Tv|^{p-2}Tv\;\psi d\sigma dt
$$
for each $\psi\in C^\infty_c\left(\overline{\Omega}\times(0,\infty)\right)$; and $(iii)$
$$
v(\cdot,0)=g.
$$
\end{defn}
The fundamental continuity and monotonicity properties of weak solutions are  summarized below. 
\begin{prop}
Assume that $v$ is a weak solution of \eqref{TruRBC}.  Then $v$ has the following properties.
\begin{enumerate}[(i)]

\item $|v|^{p-2}v\in AC^q_{\text{loc}}([0,\infty); (W^{1,p}(\Omega))^*)$.

\item $[0,\infty) \ni t\mapsto \displaystyle\int_\Omega|v(x,t)|^pdx$ is locally absolutely continous.

\item \eqref{RBCLpGoDown} holds for almost every $t>0$.

\item $[0,\infty) \ni t\mapsto e^{p\left(\frac{\lambda_p}{p-1}\right)t}\displaystyle\int_\Omega|v(x,t)|^pdx$ is nonincreasing.

\item $v:[0,\infty)\rightarrow L^p(\Omega)$ is bounded and uniformly continuous.

\item If $v(\cdot,t)\not\equiv 0$ for $t\ge 0$, then 
$$
[0,\infty) \ni t\mapsto\frac{\||v(\cdot,t)|^{p-2}v(\cdot,t)\|^q_{L^{q}(\Omega)}}{\displaystyle\||v(\cdot,t)|^{p-2}v(\cdot,t)\|^q_{ (W^{1,p}(\Omega))^*}}
$$
is nonincreasing. 
\end{enumerate}
\end{prop}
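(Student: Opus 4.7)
\bigskip

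\noindent\textbf{Proof proposal.} The plan is to run the template of Section \ref{DBC} almost verbatim, replacing the pair $(W^{1,p}_0(\Omega),W^{-1,q}(\Omega))$ by $(W^{1,p}(\Omega),(W^{1,p}(\Omega))^*)$ and $-\Delta_p$ by $\A_p$. The weak formulation in the definition above is equivalent, by Chapter 3, Lemma 1.1 of \cite{Temam}, to
\begin{equation*}
\partial_t(|v|^{p-2}v)+\A_p v=0\quad\text{in}\;(W^{1,p}(\Omega))^*
\end{equation*}
for almost every $t>0$. Combined with the isometry $\|\A_p v\|_{(W^{1,p}(\Omega))^*}=\|v\|_{W^{1,p}(\Omega)}^{p-1}$ recalled just above the proposition, this yields $\|\partial_t(|v|^{p-2}v)\|_{(W^{1,p}(\Omega))^*}=\|v\|_{W^{1,p}(\Omega)}^{p-1}$ a.e., and the bound $v\in L^p([0,\infty);W^{1,p}(\Omega))$ gives $\partial_t(|v|^{p-2}v)\in L^q_{\text{loc}}([0,\infty);(W^{1,p}(\Omega))^*)$. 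This is (i).

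For (ii) and (iii), I would apply the abstract chain rule exactly as in Lemma \ref{BasicMonLemma}. On the reflexive space $(W^{1,p}(\Omega))^*$ define the convex, proper, lower semicontinuous functional $\Phi(w)=\frac{1}{q}\int_\Omega|w|^q dx$ on $L^q(\Omega)$ (and $+\infty$ elsewhere). Its subdifferential at $w$ is the singleton $\{|w|^{q-2}w\}$ whenever $|w|^{q-2}w\in W^{1,p}(\Omega)$, and
\begin{equation*}
|\partial\Phi|(|v|^{p-2}v)\cdot\|\partial_t(|v|^{p-2}v)\|_{(W^{1,p}(\Omega))^*}=\|v\|^p_{W^{1,p}(\Omega)}\in L^1_{\text{loc}}[0,\infty).
\end{equation*}
The references cited in Lemma \ref{BasicMonLemma} (Remark 1.4.6 of \cite{AGS}, Proposition 4.11 of \cite{Vis}, Lemma 4.1 of \cite{Colli}) then give local absolute continuity of $t\mapsto \Phi(|v|^{p-2}v)=\frac{1}{p}\int_\Omega|v|^p dx$ and the chain rule evaluates its derivative as $\langle \partial_t(|v|^{p-2}v),v\rangle=-\langle\A_p v,v\rangle=-\|v\|^p_{W^{1,p}(\Omega)}$, which is (iii) once one unpacks the definition of the Robin norm. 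Assertion (iv) is immediate from (iii) and the Poincar\'e inequality \eqref{PoincareRBC} via the product rule, exactly as in Corollary \ref{MonEthenLp}.

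For (v), I would reproduce Corollary \ref{UnifContLp}: continuity of $t\mapsto|v(\cdot,t)|^{p-2}v(\cdot,t)$ in $(W^{1,p}(\Omega))^*$ from (i), together with the uniform $L^q$ bound coming from (iv) and a standard weak convergence argument, upgrades to weak $L^q$ convergence; convergence of the $L^q$ norm supplied by (ii) then gives strong $L^q$, hence $L^p$, convergence via the Radon--Riesz argument in Chapter 1, Theorem 1 of \cite{EG}. Boundedness and uniform continuity on $[0,\infty)$ follow from the exponential decay in (iv).

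The only genuinely new work is (vi), which I would prove by imitating Proposition \ref{WeakQuotientMon} with $w=|v|^{p-2}v$ regarded as a curve in $(W^{1,p}(\Omega))^*$ solving $\A_p^{-1}(\partial_t w)+|w|^{q-2}w=0$ almost everywhere. The three ingredients to transfer are: the identity $\frac{d}{dt}\frac{1}{q}\|w\|^q_{L^q(\Omega)}=-\|\partial_t w\|^q_{(W^{1,p}(\Omega))^*}$ (equivalent to (iii) using $(p-1)q=p$); the general dual-norm chain rule
\begin{equation*}
\frac{d}{dt}\frac{1}{q}\|w\|^q_{(W^{1,p}(\Omega))^*}=\langle\partial_t w,\A_p^{-1}w\rangle,
\end{equation*}
which again comes from Remark 1.4.6 of \cite{AGS} applied to the convex functional $\frac{1}{q}\|\cdot\|^q_{(W^{1,p}(\Omega))^*}$ whose Fr\'echet derivative is $\A_p^{-1}$; and the dual isometry $\|\A_p^{-1}w\|_{W^{1,p}(\Omega)}=\|w\|^{q-1}_{(W^{1,p}(\Omega))^*}$. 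These let me rerun line-for-line the two Cauchy--Schwarz estimates and the final sign computation of Proposition \ref{WeakQuotientMon}, yielding monotonicity of the ratio. I expect the only delicate point to be justifying the chain rule for the dual norm on $(W^{1,p}(\Omega))^*$; but since this norm is the dual of an equivalent $L^p$-type norm on a reflexive space and $\A_p^{-1}$ is the duality map, the setting is exactly the one covered by the abstract results in \cite{AGS}, so no new ideas are required beyond careful bookkeeping of norms.
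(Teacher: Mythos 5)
Your proposal is correct and takes exactly the approach the paper intends: the paper gives no explicit proof of this proposition, instead stating at the start of Section~\ref{RBC} that the Section~\ref{DBC} arguments carry over after replacing $W^{1,p}_0(\Omega)$ by $W^{1,p}(\Omega)$, $W^{-1,q}(\Omega)$ by $(W^{1,p}(\Omega))^*$, and $-\Delta_p$ by $\A_p$, which is precisely your plan, including the transfer of Lemma~\ref{BasicMonLemma}, Corollaries~\ref{MonEthenLp} and~\ref{UnifContLp}, and Proposition~\ref{WeakQuotientMon}. One cosmetic slip: in your treatment of (ii)--(iii) you write $\Phi(|v|^{p-2}v)=\tfrac{1}{p}\int_\Omega|v|^p\,dx$, but since $\Phi(w)=\tfrac{1}{q}\int_\Omega|w|^q\,dx$ this equals $\tfrac{1}{q}\int_\Omega|v|^p\,dx$; the constant converts correctly to the factor $\tfrac{1}{p-1}$ in \eqref{RBCLpGoDown} once you rescale by $q/p$.
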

\begin{rem}
Similar to how we argued in Remark \ref{StrongMonRemark}, it can be shown that
$$
\frac{d}{dt}\left\{\frac{\displaystyle\int_\Omega|Dv(x,t)|^pdx+\beta\int_{\partial\Omega}|v(x,t)|^pd\sigma}{\displaystyle\int_\Omega|v(x,t)|^pdx}\right\}\le 0.
$$
for any smooth, nonvanishing solution $v$ of \eqref{TruRBC}.
\end{rem}
Weak solutions of \eqref{TruRBC} have compactness properties analogous to the compactness detailed in Proposition \ref{CompactTrudinger}. In order to write a corresponding statement for weak solutions \eqref{TruRBC}, we would only need
to change $W^{1,p}_0(\Omega)$ to $W^{1,p}(\Omega)$ and change $W^{-1,q}(\Omega)$ to $(W^{1,p}(\Omega))^*.$ Moreover, a weak solution can be constructed using the following implicit time scheme: $u^0=g$,
$$
\begin{cases}
\hspace{.75in}\displaystyle\frac{{\cal J}_p(u^k)- {\cal J}_p(u^{k-1})}{\tau}=\Delta_pu^k\quad &\text{in}\;\Omega \\
|Du^k|^{p-2}Du^k\cdot \nu +\beta|u^k|^{p-2}u^k=0\quad&\text{on}\;\partial\Omega
\end{cases}
$$
for $k\in \N$. Employing the ideas used to prove Proposition \ref{ExistenceProp}, we can show there is a weak solution $v$ of \eqref{TruRBC}  with 
$$
\int_\Omega|Dv(x,t)|^pdx+\beta\int_{\partial\Omega}|(Tv)(x,t)|^pd\sigma\le \int_\Omega|Dv(x,s)|^pdx+\beta\int_{\partial\Omega}|(Tv)(x,s)|^pd\sigma
$$
for almost every $t,s\in (0,\infty)$ with $t\ge s$.

\par We are now in position to make use of the methods of the previous sections and characterize the large time behavior of weak solutions of \eqref{TruRBC}.  However, we will not give a detailed proof as our argument follows closely to our proof of Theorem \ref{TruThm1}.  We only mention that in order to adapt our proof of Theorem \ref{TruThm1}, we use that extremals of the Poincar\'e inequality \eqref{PoincareRBC} exist, are weak solutions of the PDE
$$
\begin{cases}
\hspace{1.35in}-\Delta_pu=\lambda_p|u|^{p-2}u\quad&\text{in}\;\Omega\\
|Du|^{p-2}Du\cdot \nu +\beta|u|^{p-2}u=0\quad&\text{on}\;\partial\Omega,
\end{cases}
$$
do not change sign in $\Omega$, and the ratio of any two nonvanishing extremals is constant \cite{BelKaw, Bucur, DaiFu, KawLin}.  Our main result regarding \eqref{TruRBC} is as follows.

\begin{customthm}{2}\label{TruThmRBC} 
(i) Assume $v$ is a weak solution of \eqref{TruRBC}.  Then the limit 
\begin{equation}\label{RBCscaledLim}
u:=\lim_{t\rightarrow\infty}e^{\left(\frac{\lambda_p}{p-1}\right)t}v(\cdot,t)
\end{equation}
exists in $L^p(\Omega)$ and $u$ is extremal for \eqref{PoincareRBC}. If $u\not\equiv 0$, then
$v(\cdot,t)\not\equiv 0$ for all $t\ge 0$ and 
$$
\mu_p=\lim_{t\rightarrow\infty}\frac{\||v(\cdot,t)|^{p-2}v(\cdot,t)\|^q_{L^q(\Omega)}}{\||v(\cdot,t)|^{p-2}v(\cdot,t)\|^q_{ (W^{1,p}(\Omega))^*}}.
$$ 
(ii) There is a weak solution $v$ of \eqref{TruRBC} such that the limit \eqref{RBCscaledLim}
exists in $W^{1,p}(\Omega)$.  If $u\not\equiv 0$,
$$
\lambda_p=\lim_{t\rightarrow\infty}\frac{\displaystyle\int_\Omega|Dv(x,t)|^pdx+\beta\int_{\partial\Omega}|(Tv)(x,t)|^pd\sigma}{\displaystyle\int_\Omega|v(x,t)|^pdx}.
$$ 
\end{customthm}

\section{Neumann boundary condition}\label{NBC}
Now we will study the initial value problem \eqref{TruNBC}, which is the analog of \eqref{TruIVP} with a Neumann boundary condition. As mentioned in the introduction, we will assume that $\partial\Omega$ is $C^1$ with outward unit normal field $\nu$.  As with the previous initial value problems we have considered so far, our aim is to deduce the large time behavior of solutions of \eqref{TruNBC}. However, unlike our study of \eqref{TruRBC}, our treatment of \eqref{TruNBC} is not a direct generalization of our analysis of \eqref{TruIVP}.  We will need to make use of one of our prior results (Theorem 1.3 of \cite{HynLin}) on the large time behavior of general curves of maximal slope in Banach spaces.

\subsection{Preliminaries}
 A distinguishing feature of the initial value problem \eqref{TruNBC} is that the integral $\int_\Omega|v|^{p-2}vdx$ is conserved along the flow. Indeed, if $v$ is a smooth solution of \eqref{TruNBC}, then 
\begin{align*}
\frac{d}{dt}\int_\Omega|v|^{p-2}vdx&=\int_\Omega\partial_t(|v|^{p-2}v)dx\\
&=\int_\Omega\Delta_pv dx\\
&=\int_{\partial\Omega}|Dv|^{p-2}Dv\cdot \nu d\sigma\\
&=0.
\end{align*}
A simplifying assumption that we will make is that $\int_\Omega|g|^{p-2}gdx=0$, which in turn gives $\int_\Omega|v|^{p-2}vdx=0$ for all later times. Therefore, the theory we present has to accommodate this constraint.

\par To this end, it will be convenient to make use of the Poincar\'e inequality \eqref{PoincareNBC}.  Similar to the Poincar\'e inequalities referenced in this paper, extremal functions exist and satisfy a boundary value problem which takes the form
$$
\begin{cases}
\hspace{.55in}-\Delta_pu=\lambda_p|u|^{p-2}u\quad&\text{in}\;\Omega\\
|Du|^{p-2}Du\cdot \nu =0\quad&\text{on}\;\partial\Omega.
\end{cases}
$$
In this section, $\lambda_p$ is the optimal constant in \eqref{PoincareNBC}.  However, a major difference between \eqref{PoincareNBC} with the other Poincar\'e inequalities studied in this paper is that extremals do not possess a definite sign in $\Omega$ nor are in general unique up to a multiplicative constant. These differences are precisely what lead us to use different techniques when studying the large time behavior of \eqref{TruNBC}.

\par As with our previous arguments, we will need to employ a Poincar\'e inequality that is dual to \eqref{PoincareNBC}.  This inequality will involve ${\cal C}$, the collection of measurable functions on $\Omega$ that are constant almost everywhere.  In particular, a space that will be of interest for us is the annihilator of ${\cal C}$ 
\begin{equation}\label{Xspace}
{\cal C}^\perp:=\{f\in (W^{1,p}(\Omega))^*: \left.f\right|_{{\cal C}}=0\}.
\end{equation}
For each $f\in {\cal C}^\perp$, the Neumann problem
\begin{equation}\label{NeumannProblem}
\begin{cases}
\hspace{.55in}-\Delta_pu=f\quad&\text{in}\;\Omega\\
|Du|^{p-2}Du\cdot \nu =0\quad&\text{on}\;\partial\Omega
\end{cases}
\end{equation}
has at least one weak solution $u\in W^{1,p}(\Omega)$.  That is, there is at least one $u\in W^{1,p}(\Omega)$ that satisfies 
\begin{equation}\label{WeakSolnNeumann}
\int_\Omega |Du|^{p-2}Du\cdot D\phi dx =\langle f, \phi\rangle 
\end{equation}
for each $\phi\in W^{1,p}(\Omega)$.  

\par It is not difficult to see that a weak solution of \eqref{NeumannProblem} is determined uniquely up to an additive constant. Consequently, there is only one weak solution of \eqref{NeumannProblem} that satisfies \eqref{pAverageZero}.  So if we set
\begin{equation}\label{NeumannSpaceS}
{\cal S}:=\{u\in W^{1,p}(\Omega): u\;\text{satisfies \eqref{pAverageZero}}\},
\end{equation}
we see that ${\cal A}_p:{\cal S}\rightarrow {\cal C}^\perp$ defined by
\begin{equation}\label{NeumannPLaclace}
\langle {\cal A}_pu,\psi\rangle:=\int_\Omega|Du|^{p-2}Du\cdot D\psi dx
\end{equation}
$(u\in {\cal S},\psi\in W^{1,p}(\Omega))$ is a bijection.  We also note that if $u\in C^2(\overline{\Omega})$ satisfies the Neumann condition 
$$
|Du|^{p-2}Du\cdot \nu =0\quad\text{on}\;\partial\Omega,
$$
then ${\cal A}_pu=-\Delta_pu$. 

\par The above definition of ${\cal A}_p$ allows us to equip the space ${\cal C}^\perp$ with a convenient norm 
\begin{equation}\label{normXdefn}
\|f\|_{{\cal C}^\perp}:=\langle f, {\cal A}_p^{-1}f\rangle^{1/q}.
\end{equation}
In Appendix \ref{normX}, we show ${\cal C}^\perp$ is a reflexive Banach space under this norm. The arguments given in Appendix \ref{AppDPI} will additionally imply that the dual Poincar\'e inequality 
\begin{equation}\label{NBCDualPoincare}
\mu_p\|f\|^q_{{\cal C}^\perp}\le \int_\Omega |f|^qdx
\end{equation}
holds for each $f\in L^q(\Omega)$ with $\int_\Omega fdx=0$.  Here $\mu_p:=\lambda_p^{\frac{1}{p-1}}$, 
and equality holds if and only if $f=|u|^{p-2}u$ and $u$ is extremal for \eqref{PoincareNBC}.

\subsection{Weak solutions}
For a smooth solution of \eqref{TruNBC} and $t>0$, we calculate
\begin{equation}\label{NBCLpGoDown}
\frac{d}{dt}\frac{1}{p}\int_\Omega|v(x,t)|^pdx=-\frac{1}{p-1}
\int_\Omega|Dv(x,t)|^pdx.
\end{equation}
As a result, 
$$
\frac{1}{p}\int_\Omega|v(x,t)|^pdx+\frac{1}{p-1}\int^t_0
\int_\Omega|Dv(x,s)|^pdxds=\frac{1}{p}\int_\Omega|g(x)|^pdx
$$
for $t\ge0$. This observation leads to the following definition. 
\begin{defn}
Assume $g\in L^p(\Omega)$ satisfies \eqref{pAverageZero}. A {\it weak solution} of \eqref{TruNBC} is a function $v:\Omega\times[0,\infty)\rightarrow \R$ that fulfills:
$(i)$
$$
v\in L^\infty([0,\infty);L^p(\Omega))\cap L^p([0,\infty); W^{1,p}(\Omega));
$$
$(ii)$ $v(\cdot,t)$ satisfies \eqref{pAverageZero} for all $t\ge 0$;\\
$(iii)$
\begin{equation}\label{NBCWeakSoln}
\int^\infty_0\int_\Omega |v|^{p-2}v\psi_tdxdt=\int^\infty_0\int_\Omega |Dv|^{p-2}Dv\cdot D\psi dxdt
\end{equation}
for each $\psi\in C_c^\infty\left(\overline{\Omega}\times(0,\infty)\right)$; and $(iii)$
$$
v(\cdot,0)=g.
$$
\end{defn}

\par We now list the relevant properties of weak solutions of \eqref{TruNBC}. 
\begin{prop}
Assume that $v$ is a weak solution of \eqref{TruNBC}.  Then $v$ has the following properties.
\begin{enumerate}[(i)]

\item $|v|^{p-2}v\in AC^q_{\text{loc}}([0,\infty); {\cal C}^\perp)$.

\item $[0,\infty) \ni t\mapsto \displaystyle\int_\Omega|v(x,t)|^pdx$ is locally absolutely continous.

\item \eqref{NBCLpGoDown} holds for almost every $t>0$.

\item $[0,\infty) \ni t\mapsto e^{p\left(\frac{\lambda_p}{p-1}\right)t}\displaystyle\int_\Omega|v(x,t)|^pdx$ is nonincreasing.

\item $v:[0,\infty)\rightarrow L^p(\Omega)$ is bounded and uniformly continuous.

\item If $v(\cdot,t)\not\equiv 0$ for $t\ge 0$, then 
$$
[0,\infty) \ni t\mapsto\frac{\||v(\cdot,t)|^{p-2}v(\cdot,t)\|^q_{L^{q}(\Omega)}}{\displaystyle\||v(\cdot,t)|^{p-2}v(\cdot,t)\|^q_{ {\cal C}^\perp}}
$$
is nonincreasing. 
\end{enumerate}
\end{prop}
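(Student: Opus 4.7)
The plan is to prove (i)--(vi) by transporting the arguments of Section \ref{DBC} to the Neumann setting with the substitutions $W^{1,p}_0(\Omega)\rightarrow {\cal S}$, $W^{-1,q}(\Omega)\rightarrow {\cal C}^\perp$, and $-\Delta_p\rightarrow {\cal A}_p$. The key algebraic fact to isolate first is the analogue of \eqref{pLapIsometry}: from the definition \eqref{normXdefn} and the weak form \eqref{WeakSolnNeumann},
\[
\|{\cal A}_pu\|_{{\cal C}^\perp}^q = \langle {\cal A}_pu, u\rangle = \int_\Omega|Du|^p\,dx \qquad (u\in {\cal S}).
\]
Testing \eqref{NBCWeakSoln} against separated test functions $\chi(t)\phi(x)$ with $\phi\in W^{1,p}(\Omega)$ (legitimate since adding a constant to $\phi$ alters neither side once \eqref{pAverageZero} is enforced) recasts the PDE as
\[
\partial_t(|v(\cdot,t)|^{p-2}v(\cdot,t)) + {\cal A}_p v(\cdot,t) = 0
\]
in ${\cal C}^\perp$ for a.e.\ $t\ge 0$; the isometry above then delivers $\partial_t(|v|^{p-2}v)\in L^q_{\text{loc}}([0,\infty); {\cal C}^\perp)$, which is (i).

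Items (ii) and (iii) follow by repeating the proof of Lemma \ref{BasicMonLemma} almost verbatim. Define the convex, proper, lower semicontinuous functional
\[
\Phi(w) := \begin{cases}\frac{1}{q}\int_\Omega|w|^q\,dx, & w\in L^q(\Omega),\ \int_\Omega w\,dx = 0, \\ +\infty, & \text{otherwise}\end{cases}
\]
on ${\cal C}^\perp$, whose subdifferential at $w$ equals the singleton $\{|w|^{q-2}w\}$ whenever $|w|^{q-2}w\in {\cal S}$. Applying the chain rule of Remark~1.4.6 in \cite{AGS} (valid because $({\cal C}^\perp, \|\cdot\|_{{\cal C}^\perp})$ is a reflexive Banach space per Appendix~\ref{normX}) to $w(t):=|v(\cdot,t)|^{p-2}v(\cdot,t)$ and using the PDE yields
\[
\frac{d}{dt}\frac{1}{q}\int_\Omega|v|^p\,dx =\langle \partial_tw, v\rangle = -\langle {\cal A}_pv, v\rangle = -\int_\Omega|Dv|^p\,dx,
\]
which is \eqref{NBCLpGoDown}, together with (ii). Item (iv) is immediate from (iii) and \eqref{PoincareNBC} via the product rule, as in Corollary \ref{MonEthenLp}; (v) follows from (iv) and the Corollary \ref{UnifContLp} trick of combining continuity in ${\cal C}^\perp$ (from (i)) with convergence of $L^q$ norms (from (iii)) to upgrade to strong convergence in $L^p(\Omega)$.

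For (vi), setting $w(t):=|v(\cdot,t)|^{p-2}v(\cdot,t)$, I would reproduce the computation of Proposition \ref{WeakQuotientMon}: from the energy identity via the isometry one has
\[
\frac{d}{dt}\frac{1}{q}\|w\|^q_{L^q(\Omega)} = -\|\partial_tw\|^q_{{\cal C}^\perp},
\]
while another application of Remark~1.4.6 in \cite{AGS} gives
\[
\frac{d}{dt}\frac{1}{q}\|w\|^q_{{\cal C}^\perp} = \langle \partial_tw, {\cal A}_p^{-1}w\rangle.
\]
Combined with the duality bound $|\langle \partial_tw, {\cal A}_p^{-1}w\rangle|\le \|\partial_tw\|_{{\cal C}^\perp}\|w\|^{q-1}_{{\cal C}^\perp}$ and the identity $\|w\|^q_{L^q(\Omega)} = \langle w, v\rangle = -\langle w, {\cal A}_p^{-1}\partial_t w\rangle \le \|w\|_{{\cal C}^\perp}\|\partial_tw\|^{q-1}_{{\cal C}^\perp}$, these assemble into the same bracket inequality as in Proposition \ref{WeakQuotientMon}, forcing nonpositivity of the derivative of the stated quotient. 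The principal obstacle is not any single step but confirming that the convex-analysis and chain-rule machinery from \cite{AGS} transfers to $({\cal C}^\perp, \|\cdot\|_{{\cal C}^\perp})$, which demands the reflexivity established in Appendix \ref{normX} and careful bookkeeping to keep the constraint \eqref{pAverageZero} compatible with every duality pairing throughout.
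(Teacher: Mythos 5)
Your proposal is correct and follows the route the paper implicitly prescribes: the paper states this proposition without proof, relying on the substitution dictionary $W^{1,p}_0(\Omega)\to\mathcal{S}$, $W^{-1,q}(\Omega)\to\mathcal{C}^\perp$, $-\Delta_p\to\mathcal{A}_p$ to transport Lemma \ref{BasicMonLemma}, Corollaries \ref{MonEthenLp}--\ref{UnifContLp}, and Proposition \ref{WeakQuotientMon} verbatim. You correctly identified the analogue of the isometry \eqref{pLapIsometry}, the need to encode the zero-$p$-average constraint \eqref{pAverageZero} in the definition of $\Phi$, and the role of reflexivity of $(\mathcal{C}^\perp,\|\cdot\|_{\mathcal{C}^\perp})$ (Appendix \ref{normX}) in justifying the \cite{AGS} chain rule — these are exactly the bookkeeping points the translation turns on.
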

\begin{rem}
Similar to Remark \ref{StrongMonRemark}, we can verify 
$$
\frac{d}{dt}\left\{\frac{\displaystyle\int_\Omega|Dv(x,t)|^pdx}{\displaystyle\int_\Omega|v(x,t)|^pdx}\right\}\le 0.
$$
for any smooth, nonvanishing solution $v$ of \eqref{TruRBC}.
\end{rem}

Weak solutions of \eqref{TruNBC} have compactness properties similar to the compactness presented in Proposition \ref{CompactTrudinger}. In order to phrase an analogous theorem for weak solutions \eqref{TruNBC}, one simply has to exchange $W^{1,p}_0(\Omega)$ with $W^{1,p}(\Omega)$ and substitute  $W^{-1,q}(\Omega)$ with ${\cal C}^\perp.$ Further, a weak solution of  \eqref{TruNBC} can be designed using the following implicit time scheme: set $u^0=g$,  find $u^k\in W^{1,p}(\Omega)$ satisfying \eqref{pAverageZero} and
$$
\begin{cases}
\displaystyle\frac{{\cal J}_p(u^k)- {\cal J}_p(u^{k-1})}{\tau}=\Delta_pu^k\quad &\text{in}\;\Omega \\
\hspace{.2in}|Du^k|^{p-2}Du^k\cdot \nu=0\quad&\text{on}\;\partial\Omega
\end{cases}
$$
for each $k\in \N$. The same ideas presented in our proof of Proposition \ref{ExistenceProp}, can be used to show there is a weak solution $v$ of \eqref{TruNBC} that satisfies 
$$
\int_\Omega|Dv(x,t)|^pdx\le \int_\Omega|Dv(x,s)|^pdx
$$
for almost every $t,s\in (0,\infty)$ with $t\ge s$.

\subsection{Large time limit}
We are now ready to present our large time limit result for solutions of \eqref{TruNBC}.  Our strategy will be different than how we approached the previous initial value problems because the Neumann eigenvalue problem does in general not have signed solutions nor solutions that are unique up to multiplication by constants. In particular, our proof Corollary \ref{positivityCorollary} and Theorem \ref{TruThm1} part $(i)$ cannot be directly adapted to this setting.  Instead we will use a general result about the large time behavior of doubly nonlinear evolutions we derived in our previous work \cite{HynLin}. We did not pursue this approach throughout the entirety of this paper as it relies on technical results and because we wanted to prove the results in this paper in an accessible fashion. 

\begin{customthm}{3}\label{TruThmNBC} 
Suppose that the ratio of any two extremals of \eqref{PoincareNBC} that do not vanish identically is constant. 
\\ (i) Assume $v$ is a weak solution of \eqref{TruNBC}.  Then the limit 
\begin{equation}\label{scaledLimNBC}
u:=\lim_{t\rightarrow\infty}e^{\left(\frac{\lambda_p}{p-1}\right)t}v(\cdot,t)
\end{equation}
exists in $L^p(\Omega)$, and $u$ is extremal for \eqref{PoincareNBC}. If $u\not\equiv 0$, then 
\begin{equation}\label{RatioLimNBC}
\mu_p=\lim_{t\rightarrow\infty}\frac{\||v(\cdot,t)|^{p-2}v(\cdot,t)\|^q_{L^q(\Omega)}}{\||v(\cdot,t)|^{p-2}v(\cdot,t)\|^q_{{\cal C}^\perp}}.
\end{equation} 
(ii) There is a weak solution $v$ of \eqref{TruNBC} such that the limit \eqref{scaledLimNBC}
exists in $W^{1,p}(\Omega)$.  If $u\not\equiv 0$,  
$$
\lambda_p=\lim_{t\rightarrow\infty}\frac{\displaystyle\int_\Omega|Dv(x,t)|^pdx}{\displaystyle\int_\Omega|v(x,t)|^pdx}.
$$
\end{customthm}
\begin{proof}
We will only prove part $(i)$ since part $(ii)$ can be readily adapted from the proof of part $(ii)$ of Theorem
\ref{TruThm1}, once part $(i)$ has been established.  Note that the definition of ${\cal A}_p$ in \eqref{NeumannPLaclace} and the weak solution 
condition \eqref{NBCWeakSoln} imply
$$
\partial_t(|v(\cdot,t)|^{p-2}v(\cdot,t))+{\cal A}_p(v(\cdot,t))=0\quad \text{a.e.}\;\; t>0.
$$
Therefore, if we set $w:=|v|^{p-2}v$, then
\begin{equation}\label{wEqnNBC}
{\cal A}_p^{-1}(\partial_tw(\cdot,t))+|w(\cdot,t)|^{q-2}w(\cdot,t)=0\quad  \text{a.e.}\;\; t>0.
\end{equation}
We will now interpret the flow \eqref{wEqnNBC} as an abstract doubly nonlinear evolution.

\par To this end, we first note that  for each $f\in {\cal C}^\perp$, ${\cal A}_p^{-1}f$ belongs to the subdifferential of $\Psi=\frac{1}{q}\|\cdot\|_{{\cal C}^\perp}^q$ at $f$ (see Remark \ref{SubDiffRem}). For a given $f\in {\cal C}^\perp$, we also define
$$
\Phi(f):=
\begin{cases}
\frac{1}{q}\int_\Omega|f|^qdx, &\quad  f\in L^q(\Omega),\; \int_\Omega fdx=0\\
+\infty, &\quad \text{otherwise}.
\end{cases}
$$
Equation \eqref{wEqnNBC} can now be rewritten as the doubly nonlinear flow
$$
\partial\Psi(\partial_tw(\cdot,t))+\partial\Phi(w(\cdot,t))\ni 0,
$$
where $w\in AC^q_\text{loc}([0,\infty); {\cal C}^\perp)$.  Moreover, the dual Poincar\'e inequality \eqref{NBCDualPoincare} can be written 
\begin{equation}\label{NBCDualPoincare2}
\mu_p\Psi(f)\le \Phi(f), \quad f\in {\cal C}^\perp.
\end{equation}

\par With this reinterpretation of the flow \eqref{TruNBC}, we can now apply Theorem 1.3 of \cite{HynLin}.  This result implies that there is $f\in {\cal C}^\perp$ for which equality holds in \eqref{NBCDualPoincare2} and 
$$
\begin{cases}
\lim_{t\rightarrow\infty}e^{\lambda_pt}w(\cdot,t)=f\;\;\text{in}\; {\cal C}^\perp\\
\lim_{t\rightarrow\infty}\Phi(e^{\lambda_pt}w(\cdot,t))=\Phi(f).
\end{cases}
$$
Moreover, if $f\neq 0\in {\cal C}^\perp$, then 
$$
\lim_{t\rightarrow\infty}\frac{\Phi(w(\cdot,t))}{\Psi(w(\cdot,t))}=\mu_p.
$$
Consequently, the limit \eqref{scaledLimNBC} holds for $u:=|f|^{q-2}f$, which is necessarily 
an extremal of  \eqref{PoincareNBC}; and if $u\not\equiv 0$, then
we can also conclude \eqref{RatioLimNBC}.  
\end{proof}
\begin{rem} If we do not make the assumption that any two extremals of \eqref{PoincareNBC} are linearly dependent, our methods give that there is a sequence of positive numbers $(t_k)_{k\in\N}$ increasing to 
infinity for which the limit $u:=\lim_{k\rightarrow\infty}e^{\left(\lambda_p/(p-1)\right)t_k}v(\cdot,t_k)$ exists and is extremal for \eqref{NBCDualPoincare}.  If $u\not\equiv 0$, then \eqref{RatioLimNBC} still holds. 
\end{rem}

\section{Fractional Trudinger equation}\label{NonlocalSect}
In this final section, we will study the initial value problem \eqref{TruFrac}. 
Recall that this problem involves the fractional $p-$Laplacian  $(-\Delta_p)^s$ \eqref{DeltaSpSmooth} and a Poincar\'e inequality \eqref{FracPoincare} on the fractional Sobolev space $W^{s,p}_0(\Omega)$.  It is known that extremal functions $u\in W^{s,p}_0(\Omega)$ of \eqref{FracPoincare} exist and satisfy    
$$
\begin{cases}
(-\Delta_p)^su=\lambda_p|u|^{p-2}u\;\; & \text{in}\;\Omega\\
\hspace{.53in} u=0 \;\; &\text{on}\;\ \R^n\setminus\Omega.
\end{cases}
$$
Here $\lambda_p$ is the optimal constant in \eqref{FracPoincare}.  Moreover, extremals have a definite sign in $\Omega$ and the ratio of any two nonvanishing extremals is constant  \cite{LindgrenLindqvist}.

\par We will also define the operator $(-\Delta_p)^s$ more generally as the mapping $(-\Delta_p)^s: W^{s,p}_0(\Omega)\rightarrow (W^{s,p}_0(\Omega))^*$ given by
$$
\langle(-\Delta_p)^su,\psi\rangle:=\iint_{\R^n\times\R^n}\frac{|u(x)-u(y)|^{p-2}(u(x)-u(y))(\psi(x)-\psi(y))}{|x-y|^{n+ps}}dxdy
$$
for $u,\psi\in W^{s,p}_0(\Omega)$. We leave it as an exercise to check that $(-\Delta_p)^s$ is bijective and 
\begin{equation}\label{FracIsometry}
\|(-\Delta_p)^su\|_{(W^{s,p}_0(\Omega))^*}=\|u\|_{W^{s,p}_0(\Omega)}^{p-1},
\end{equation}
where 
$$
\|u\|_{W^{s,p}_0(\Omega)}:=\left(\iint_{\R^n\times\R^n}\frac{|u(x)-u(y)|^p}{|x-y|^{n+ps}}dxdy\right)^{1/p}.
$$
The identity \eqref{FracIsometry} can be used to verify the following dual Poincar\'e inequality 
\begin{equation}\label{dualFracPoincare}
\mu_p\|f\|^q_{(W^{s,p}_0(\Omega))^*}\le \int_\Omega|f|^qdx\quad (f\in L^q(\Omega))
\end{equation}
where $\mu_p:=\lambda_p^{\frac{1}{p-1}}$ (see Appendix \ref{AppDPI}).

\par For a smooth solution $v$ of \eqref{TruFrac} and $t>0$, 
\begin{equation}\label{FracLpGoDown}
\frac{d}{dt}\frac{1}{p}\int_\Omega|v(x,t)|^pdx=-\frac{1}{p-1}\iint_{\R^n\times\R^n}\frac{|v(x,t)-v(y,t)|^p}{|x-y|^{n+ps}}dxdy.
\end{equation}
Consequently, for all $t\ge0$,
$$
\frac{1}{p}\int_\Omega|v(x,t)|^pdx+\frac{1}{p-1}\int^t_0\left\{
\iint_{\R^n\times\R^n}\frac{|v(x,\tau)-v(y,\tau)|^p}{|x-y|^{n+ps}}dxdy\right\}d\tau
=\frac{1}{p}\int_\Omega|g(x)|^pdx.
$$
This observation inspires the following definition. 
\begin{defn}
Assume $g\in L^p(\Omega)$. A {\it weak solution} of \eqref{TruFrac} is a function $v:\R^n\times[0,\infty)\rightarrow \R$ that satisfies:
$(i)$
$$
v\in L^\infty([0,\infty);L^p(\Omega))\cap L^p([0,\infty); W^{s,p}_0(\Omega));
$$
$(ii)$
\begin{align*}
\int^\infty_0\int_\Omega |v|^{p-2}v\psi_t dxdt = \hspace{4in} \\
\int^\infty_0\left\{\iint_{\R^n\times\R^n}\frac{|v(x,t)-v(y,t)|^{p-2}(v(x,t)-v(y,t))(\psi(x,t)-\psi(y,t))}{|x-y|^{n+ps}}dxdy\right\}dt
\end{align*}
for each $\psi\in C^\infty_c(\Omega\times(0,\infty))$; and $(iii)$
$$
v(\cdot,0)=g.
$$
\end{defn}
Some useful continuity and monotonicity properties of weak solutions of \eqref{TruFrac} are listed below. 
\begin{prop}
Assume that $v$ is a weak solution of \eqref{TruFrac}.  Then $v$ has the following properties.
\begin{enumerate}[(i)]

\item $|v|^{p-2}v\in AC^q_{\text{loc}}([0,\infty); (W^{s,p}_0(\Omega))^*)$.

\item $[0,\infty) \ni t\mapsto \displaystyle\int_\Omega|v(x,t)|^pdx$ is locally absolutely continous.

\item \eqref{FracLpGoDown} holds for almost every $t>0$.

\item $[0,\infty) \ni t\mapsto e^{p\left(\frac{\lambda_p}{p-1}\right)t}\displaystyle\int_\Omega|v(x,t)|^pdx$ is nonincreasing.

\item $v:[0,\infty)\rightarrow L^p(\Omega)$ is bounded and uniformly continuous.

\item If $v(\cdot,t)\not\equiv 0$ for $t\ge 0$, then 
$$
[0,\infty) \ni t\mapsto\frac{\||v(\cdot,t)|^{p-2}v(\cdot,t)\|^q_{L^{q}(\Omega)}}{\displaystyle\||v(\cdot,t)|^{p-2}v(\cdot,t)\|^q_{ (W^{s,p}_0(\Omega))^*}}
$$
is nonincreasing. 
\end{enumerate}
\end{prop}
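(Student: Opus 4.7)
The plan is to transplant the arguments of Section \ref{DBC} to the fractional setting essentially verbatim, using the isometry \eqref{FracIsometry} in place of \eqref{pLapIsometry} and the duality between $W^{s,p}_0(\Omega)$ and $(W^{s,p}_0(\Omega))^*$ in place of $W^{1,p}_0(\Omega)$ and $W^{-1,q}(\Omega)$. The key structural point is that the weak formulation rewrites as
\[
\partial_t(|v(\cdot,t)|^{p-2}v(\cdot,t)) + (-\Delta_p)^s v(\cdot,t) = 0
\]
in $(W^{s,p}_0(\Omega))^*$ for a.e.\ $t>0$. Combined with \eqref{FracIsometry}, this yields the identity
\[
\|\partial_t(|v(\cdot,t)|^{p-2}v(\cdot,t))\|_{(W^{s,p}_0(\Omega))^*}
= \|v(\cdot,t)\|^{p-1}_{W^{s,p}_0(\Omega)}
\quad \text{a.e.\ } t>0,
\]
and since the right side lies in $L^q_{\mathrm{loc}}[0,\infty)$ by \eqref{NaturalBounds} (with $W^{s,p}_0$ in place of $W^{1,p}_0$), this gives (i).

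For (ii) and (iii), I would repeat the proof of Lemma \ref{BasicMonLemma} with the convex, proper, lower semicontinuous functional
\[
\Phi(w) := \tfrac{1}{q}\int_\Omega |w|^q\,dx
\quad\text{if } w \in L^q(\Omega),
\quad \Phi(w) := +\infty \text{ otherwise},
\]
regarded on $(W^{s,p}_0(\Omega))^*$. A direct computation shows $\partial\Phi(w)=\{|w|^{q-2}w\}$ precisely when $|w|^{q-2}w \in W^{s,p}_0(\Omega)$. Applying the chain rule for absolutely continuous curves (Remark 1.4.6 of \cite{AGS}), together with part (i), gives that $t\mapsto \tfrac{1}{q}\int_\Omega|v|^p\,dx = \Phi(|v|^{p-2}v)$ is locally absolutely continuous with derivative $\langle \partial_t(|v|^{p-2}v), v\rangle = -\langle (-\Delta_p)^s v, v\rangle$, which is exactly $-\tfrac{1}{p-1}\iint_{\R^n\times\R^n}|v(x)-v(y)|^p|x-y|^{-n-ps}\,dx\,dy$; this establishes \eqref{FracLpGoDown}. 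Property (iv) is then immediate from (iii) and the Poincar\'e inequality \eqref{FracPoincare}, after multiplying by $e^{p\lambda_p t/(p-1)}$ as in Corollary \ref{MonEthenLp}.

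Property (v) follows the argument of Corollary \ref{UnifContLp}: part (i) gives $|v(\cdot,t_k)|^{p-2}v(\cdot,t_k)\to |v(\cdot,t)|^{p-2}v(\cdot,t)$ in $(W^{s,p}_0(\Omega))^*$ along any sequence $t_k\to t$, and boundedness in $L^q(\Omega)$ (from (iv)) upgrades this to weak convergence in $L^q(\Omega)$. The convergence of norms, coming from (iii)--(iv), then gives strong convergence in $L^q(\Omega)$, hence $v(\cdot,t_k)\to v(\cdot,t)$ in $L^p(\Omega)$. Boundedness and decay to $0$ as $t\to\infty$ come from (iv), and uniform continuity follows.

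The main step is (vi), which is the fractional analog of Proposition \ref{WeakQuotientMon}. Setting $w:=|v|^{p-2}v$, the equation becomes
\[
((-\Delta_p)^s)^{-1}(\partial_t w) + |w|^{q-2}w = 0
\quad \text{a.e.\ } t>0.
\]
I would then mirror the three computations in the proof of Proposition \ref{WeakQuotientMon}: first, $\tfrac{d}{dt}\tfrac{1}{q}\|w\|^q_{L^q(\Omega)} = -\|\partial_t w\|^q_{(W^{s,p}_0(\Omega))^*}$ (exactly as for (iii) above, via pairing the equation with $|w|^{q-2}w$); second, the general identity $\tfrac{d}{dt}\tfrac{1}{q}\|w\|^q_{(W^{s,p}_0(\Omega))^*}=\langle\partial_t w,((-\Delta_p)^s)^{-1}w\rangle$ from Remark 1.4.6 of \cite{AGS}; and third, the two dual estimates
\[
|\langle \partial_t w,((-\Delta_p)^s)^{-1}w\rangle| \le \|\partial_t w\|_{(W^{s,p}_0(\Omega))^*}\|w\|^{q-1}_{(W^{s,p}_0(\Omega))^*}
\]
and
\[
\|w\|^q_{L^q(\Omega)}=-\langle w,((-\Delta_p)^s)^{-1}(\partial_t w)\rangle
\le \|w\|_{(W^{s,p}_0(\Omega))^*}\|\partial_t w\|^{q-1}_{(W^{s,p}_0(\Omega))^*},
\]
both of which use only the isometry \eqref{FracIsometry} and Cauchy--Schwarz-type pairing. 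Multiplying and combining via the quotient rule gives
\[
\frac{d}{dt}\frac{\|w\|^q_{L^q(\Omega)}}{\|w\|^q_{(W^{s,p}_0(\Omega))^*}}\le 0,
\]
which is (vi). The only subtlety is that $((-\Delta_p)^s)^{-1}$ replaces $(-\Delta_p)^{-1}$, and one must verify this operator satisfies the same Cauchy--Schwarz identity in the dual pairing; this is a direct consequence of \eqref{FracIsometry} and the convexity of $\tfrac{1}{p}\|\cdot\|^p_{W^{s,p}_0(\Omega)}$, so no genuinely new difficulty arises.
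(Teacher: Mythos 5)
Your proposal is correct and follows exactly the approach the paper intends: the paper gives no separate proof of this proposition, but (as with the Robin and Neumann cases) points the reader to reuse the Section~\ref{DBC} arguments after swapping $W^{1,p}_0(\Omega)$ for $W^{s,p}_0(\Omega)$, $W^{-1,q}(\Omega)$ for $(W^{s,p}_0(\Omega))^*$, and \eqref{pLapIsometry} for \eqref{FracIsometry}, and your write-up fills in precisely those substitutions in Lemma~\ref{BasicMonLemma}, Corollaries~\ref{MonEthenLp} and \ref{UnifContLp}, and Proposition~\ref{WeakQuotientMon}. One small slip in your derivation of (iii): the chain rule gives $\frac{d}{dt}\frac{1}{q}\int_\Omega|v|^p\,dx=-\langle(-\Delta_p)^sv,v\rangle$, which equals $-\iint_{\R^n\times\R^n}|v(x)-v(y)|^p|x-y|^{-n-ps}\,dx\,dy$ with no $\frac{1}{p-1}$; that factor only appears after rescaling to $\frac{d}{dt}\frac{1}{p}\int_\Omega|v|^p\,dx$ to match \eqref{FracLpGoDown}.
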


\begin{rem}
As in Remark \ref{StrongMonRemark}, we can verify
$$
\frac{d}{dt}\left\{\frac{\displaystyle\iint_{\R^n\times\R^n}\frac{|v(x,t)-v(y,t)|^p}{|x-y|^{n+ps}}dxdy}{\displaystyle\int_\Omega|v(x,t)|^pdx}\right\}\le 0
$$
for any smooth solution $v$ of \eqref{TruFrac} that doesn't vanish identically.
\end{rem}

\par Weak solutions of \eqref{TruFrac} have compactness properties similar to those detailed in Proposition \ref{CompactTrudinger}. In order to write a corresponding statement for weak solutions \eqref{TruFrac}, we would only need
to change $W^{1,p}_0(\Omega)$ to $W^{s,p}_0(\Omega)$ and change $W^{-1,q}(\Omega)$ to $(W^{s,p}_0(\Omega))^*.$ Moreover, a weak solution can be constructed using the following implicit time scheme: $u^0=g$,
$$
\begin{cases}
\displaystyle\frac{{\cal J}_p(u^k)- {\cal J}_p(u^{k-1})}{\tau}+(-\Delta_p)^su^k=0\;\; &\text{in}\;\Omega \\
\hspace{2.05in}u^k=0\;\;&\text{on}\;\R^n\setminus\Omega
\end{cases}
$$
for $k\in \N$. Using the ideas in our proof of Proposition \ref{ExistenceProp}, we have that there is a weak solution $v$ of \eqref{TruFrac} with 
$$
\iint_{\R^n\times\R^n}\frac{|v(x,t)-v(y,t)|^p}{|x-y|^{n+ps}}dxdy\le
\iint_{\R^n\times\R^n}\frac{|v(x,s)-v(y,s)|^p}{|x-y|^{n+ps}}dxdy
$$
for almost every $t,s\in (0,\infty)$ with $t\ge s$.

\par Employing the results above with the methods used to prove Theorem \ref{TruThm1}, we have the following assertion regarding the large time behavior of weak solutions of \eqref{TruFrac}.  

\begin{customthm}{4}\label{TruThmFrac} (i) Assume $v$ is a weak solution of \eqref{TruFrac}.  Then the limit 
\begin{equation}\label{FracscaledLim}
u:=\lim_{t\rightarrow\infty}e^{\left(\frac{\lambda_p}{p-1}\right)t}v(\cdot,t)
\end{equation}
exists in $L^p(\Omega)$, and $u$ is extremal for \eqref{FracPoincare}. If $u\not\equiv 0$, then $v(\cdot,t)\not\equiv 0$ for $t\ge 0$ and
$$
\mu_p=\lim_{t\rightarrow\infty}\frac{\||v(\cdot,t)|^{p-2}v(\cdot,t)\|^q_{L^q(\Omega)}}{\||v(\cdot,t)|^{p-2}v(\cdot,t)\|^q_{ (W^{s,p}_0(\Omega))^*}}.
$$ 
(ii) There is a weak solution $v$ of \eqref{TruFrac} such that the limit \eqref{FracscaledLim}
exists in $W^{s,p}_0(\Omega)$.  If $u\not\equiv 0$,  
$$
\lambda_p=\lim_{t\rightarrow\infty}\frac{\displaystyle\iint_{\R^n\times\R^n}\frac{|v(x,t)-v(y,t)|^p}{|x-y|^{n+ps}}dxdy}{\displaystyle\int_\Omega|v(x,t)|^pdx}.
$$ 
\end{customthm}

\appendix 

\section{Dual Poincar\'e inequality}\label{AppDPI}
This section is devoted to deriving the dual Poincar\'e inequality \eqref{dualPoincare} and characterizing its equality condition.  Analogous computations can be used to establish inequalities \eqref{RBCdualPoincare}, \eqref{NBCDualPoincare}, and \eqref{dualFracPoincare} their respective equality conditions.

\par Let $f\in W^{-1,q}(\Omega)$, and choose $u\in W^{1,p}_0(\Omega)$ so that $-\Delta_pu=f$; here the $p$-Laplacian $-\Delta_p: W^{1,p}_0(\Omega)\rightarrow W^{-1,q}(\Omega)$ is defined by the formula \eqref{W1pzeroPlaplace}. We have by \eqref{pLapIsometry} that
\begin{align*}
\langle f, u\rangle&=\langle -\Delta_pu, u\rangle\\
&=\|u\|^p_{W^{1,p}_0(\Omega)}\\
&=\|f\|^q_{W^{-1,q}(\Omega)}.
\end{align*}
\par Now suppose in addition that $f\in L^q(\Omega)$ and $f\not\equiv 0$.  The above computation, H\"older's inequality and \eqref{Poincare} together imply
\begin{align*}
\|f\|^q_{W^{-1,q}(\Omega)}&=\int_{\Omega}fudx\\
&\le \left(\int_\Omega|f|^qdx\right)^{1/q}\left(\int_\Omega|u|^pdx\right)^{1/p}\\
&\le \left(\int_\Omega|f|^qdx\right)^{1/q}\frac{1}{\lambda_p^{1/p}}\left(\int_\Omega|Du|^pdx\right)^{1/p}\\
&= \left(\int_\Omega|f|^qdx\right)^{1/q}\frac{1}{\lambda_p^{1/p}}\|u\|_{W^{1,p}_0(\Omega)}\\
&= \left(\int_\Omega|f|^qdx\right)^{1/q}\frac{1}{\lambda_p^{1/p}}\|f\|^{q-1}_{W^{-1,q}(\Omega)}.
\end{align*}
Setting $\mu_p=\lambda_p^{\frac{1}{p-1}}$, we then have
$$
\mu_p\|f\|^q_{W^{-1,q}(\Omega)}\le\int_\Omega|f|^qdx,
$$
which is the dual Poincar\'e inequality \eqref{dualPoincare}.

\par If equality holds in our computations above, then $u$ is extremal for the Poincar\'e inequality \eqref{Poincare} and equality holds in our application of H\"older's inequality. Consequently, 
$$
\frac{f}{\|f\|_{L^q(\Omega)}}=\frac{|u|^{p-2}u}{\|u\|^{p-1}_{L^p(\Omega)}}.
$$
Therefore, equality holds in the dual Poincar\'e inequality  \eqref{dualPoincare} if and only if $f=|u_0|^{p-2}u_0$ for an extremal $u_0$ of the Poincar\'e inequality \eqref{Poincare}.

\begin{rem}
There is another way to derive  \eqref{dualPoincare}. The Poincar\'e inequality \eqref{Poincare} expresses that $W^{1,p}_0(\Omega)\subset L^p(\Omega)$ with the continuous embedding $i: W^{1,p}_0(\Omega)\rightarrow L^p(\Omega); u\mapsto u$. In particular, in \eqref{Poincare} we have $\|i\|=\lambda_p^{-1/p}.$  It follows that the adjoint operator $i^*: L^q(\Omega)\rightarrow W^{-1,q}(\Omega)$ is the continuous embedding of $L^q(\Omega)\subset W^{-1,q}(\Omega)$  and $\|i^*\|=\lambda_p^{-1/p}$. We then conclude \eqref{dualPoincare}.
\end{rem}

\section{Norm on ${\cal C}^\perp$}\label{normX}
Recall the definitions of the space ${\cal C}^\perp$ \eqref{Xspace}, the space ${\cal S}$ \eqref{NeumannSpaceS}, the operator ${\cal A}_p$ \eqref{NeumannPLaclace}, and the function $\|\cdot \|_{{\cal C}^\perp}$ \eqref{normXdefn}.  We will show 
that $\|\cdot\|_{{\cal C}^\perp}$ is a norm on ${\cal C}^\perp$ and that ${\cal C}^\perp$ is a reflexive Banach space under this norm.  
The dual Poincar\'e inequality \eqref{NBCDualPoincare} follows from the arguments given in  Appendix \ref{AppDPI} once we observe that for any $f\in {\cal C}^\perp$ and  corresponding weak solution $u$ of \eqref{NeumannProblem} that satisfies \eqref{pAverageZero}, 
\begin{equation}\label{XnormToTheQ}
\|f\|_{{\cal C}^\perp}^q=\langle f, u\rangle =\int_\Omega|Du|^pdx.
\end{equation}

\begin{prop}
$\|\cdot\|$ is a norm on ${\cal C}^\perp$. 
\end{prop}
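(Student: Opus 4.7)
The plan is to verify the norm axioms simultaneously by first establishing a dual-norm characterization
\[
\|f\|_{{\cal C}^\perp}=\sup\bigl\{|\langle f,u\rangle|:u\in W^{1,p}(\Omega),\ \|Du\|_{L^p(\Omega)}\le 1\bigr\}.
\]
Once this identity is in hand, absolute homogeneity and the triangle inequality come for free, because for each fixed test $u$ the map $f\mapsto|\langle f,u\rangle|$ is a seminorm on $(W^{1,p}(\Omega))^*$, and a pointwise supremum of seminorms is a seminorm wherever it is finite.

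To establish the characterization, I would set $u_f:={\cal A}_p^{-1}f\in{\cal S}$ and rely on the identity \eqref{XnormToTheQ}, which rewrites $\|f\|_{{\cal C}^\perp}=\|Du_f\|_{L^p(\Omega)}^{p-1}$. The $\ge$ direction of the sup formula then comes from $\langle f,u\rangle=\langle{\cal A}_pu_f,u\rangle=\int_\Omega|Du_f|^{p-2}Du_f\cdot Du\,dx$ and Hölder's inequality, which gives $|\langle f,u\rangle|\le\|Du_f\|_{L^p(\Omega)}^{p-1}\|Du\|_{L^p(\Omega)}$. The matching $\le$ direction is realized by the explicit test function $u:=u_f/\|Du_f\|_{L^p(\Omega)}$; the degenerate case $Du_f=0$ almost everywhere forces $u_f\in{\cal S}\cap{\cal C}=\{0\}$ and hence $f=0$ by bijectivity of ${\cal A}_p$, so there is nothing to check.

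For positive definiteness, suppose $\|f\|_{{\cal C}^\perp}=0$. By the sup characterization, $\langle f,u\rangle=0$ for every $u\in W^{1,p}(\Omega)$ with $\|Du\|_{L^p(\Omega)}\le 1$, and hence by scaling for every $u\in W^{1,p}(\Omega)$ with $\|Du\|_{L^p(\Omega)}>0$. For constant $u$ the defining property $f\in{\cal C}^\perp$ immediately gives $\langle f,u\rangle=0$. Together these show $f$ annihilates all of $W^{1,p}(\Omega)$, so $f=0$. Nonnegativity of $\|f\|_{{\cal C}^\perp}$ is already evident from \eqref{XnormToTheQ}.

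The main obstacle I anticipate is precisely the triangle inequality. A direct attack from the definition $\|f\|_{{\cal C}^\perp}^q=\langle f,{\cal A}_p^{-1}f\rangle$ is awkward because ${\cal A}_p^{-1}$ is a genuinely nonlinear operator, and the expression $\langle f+g,{\cal A}_p^{-1}(f+g)\rangle$ does not decompose conveniently in terms of the pieces $\langle f,{\cal A}_p^{-1}f\rangle$ and $\langle g,{\cal A}_p^{-1}g\rangle$. Routing through the sup characterization sidesteps this nonlinearity entirely, reducing the triangle inequality to the universal pointwise bound $|\langle f+g,u\rangle|\le|\langle f,u\rangle|+|\langle g,u\rangle|$ followed by a termwise supremum.
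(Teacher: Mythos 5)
Your proof is correct, and it takes a genuinely different route from the paper's. You establish the dual-norm characterization
\[
\|f\|_{{\cal C}^\perp}=\sup\bigl\{|\langle f,u\rangle|:u\in W^{1,p}(\Omega),\ \|Du\|_{L^p(\Omega)}\le 1\bigr\},
\]
which immediately exhibits $\|\cdot\|_{{\cal C}^\perp}$ as a pointwise supremum of seminorms; all axioms then follow mechanically, with positive definiteness handled separately by splitting $u$ into its gradient-free (constant) part, annihilated by $f\in{\cal C}^\perp$, and the rest, annihilated by scaling. The paper, by contrast, takes precisely the ``direct attack'' you anticipated would be awkward: it proves the H\"older-type estimate $\langle f,{\cal A}_p^{-1}g\rangle\le\|f\|_{{\cal C}^\perp}\|g\|_{{\cal C}^\perp}^{q-1}$ (by pulling back to the $L^p$ gradient inner product exactly as you do) and then applies it with $g=f_1+f_2$, closing the triangle inequality with Young's inequality. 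So your worry about the nonlinearity of ${\cal A}_p^{-1}$ is unfounded: the paper's H\"older-type bound decouples it cleanly, and the direct route is shorter. Your route buys more -- it identifies $\|\cdot\|_{{\cal C}^\perp}$ explicitly as the dual of the $L^p$ gradient seminorm on $W^{1,p}(\Omega)/{\cal C}$, which is illuminating in its own right and foreshadows the equivalence with $\|\cdot\|_{(W^{1,p}(\Omega))^*}$ proved in the following proposition -- but at the cost of establishing the sup formula first. One small labeling slip: what you call the ``$\ge$ direction from H\"older'' is the bound $\|f\|_{{\cal C}^\perp}\ge\sup$, and the realization by $u_f/\|Du_f\|_{L^p}$ gives $\le$; the content is right, just keep the orientation of the inequalities straight.
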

\begin{proof}
\par By formula \eqref{XnormToTheQ}, $\|f\|_{{\cal C}^\perp}\ge 0$ and if $\|f\|_{{\cal C}^\perp}=0$, then $f=0\in {\cal C}^\perp$. Moreover, ${\cal A}_p$ degree $p-1$ homogeneous, so its inverse is  degree $q-1$ homogeneous. It follows that $\|\cdot \|_{{\cal C}^\perp}$ is positively homogeneous.  Thus, we are left to argue that the triangle inequality holds. 

\par To this end, we first assert 
\begin{equation}\label{XMinkowski}
\langle f, {\cal A}_p^{-1}g\rangle \le \|f\|_{{\cal C}^\perp}\|g\|_{{\cal C}^\perp}^{q-1}
\end{equation}
for every $f,g\in {\cal C}^\perp$.  In order to verify this claim, we choose $u, v\in {\cal S}$ and that solve ${\cal A}_pu=f$ and ${\cal A}_pv=g$.  Then we have
\begin{align*}
\langle f, {\cal A}_p^{-1}g\rangle &= \int_\Omega |Du|^{p-2}Du\cdot Dvdx\\
&\le \left(\int_\Omega |Du|^{p}dx\right)^{1-1/p}\left(\int_\Omega |Dv|^{p}dx\right)^{1/p}\\
&= \langle f,u\rangle^{1/q}\langle g,v\rangle^{1-1/q}\\
&= \|f\|_{{\cal C}^\perp}\|g\|_{{\cal C}^\perp}^{q-1}.
\end{align*}

\par Now let $f_1, f_2\in {\cal C}^\perp$. Using \eqref{XMinkowski} and Young's inequality, we compute 
\begin{align*}
\|f_1+f_2\|_{{\cal C}^\perp}^q&=\langle f_1+f_2, {\cal A}_p^{-1}\left(f_1+f_2\right)\rangle\\
&=\langle f_1, {\cal A}_p^{-1}\left(f_1+f_2\right)\rangle +\langle f_2, {\cal A}_p^{-1}\left(f_1+f_2\right)\rangle\\
&\le \|f_1\|_{{\cal C}^\perp}\|f_1+f_2\|_{{\cal C}^\perp}^{q-1}+\|f_2\|_{{\cal C}^\perp}\|f_1+f_2\|_{{\cal C}^\perp}^{q-1}\\
&=\left(\|f_1\|_{{\cal C}^\perp}+\|f_2\|_{{\cal C}^\perp}\right)\|f_1+f_2\|_{{\cal C}^\perp}^{q-1}\\
&\le \frac{1}{q}\left(\|f_1\|_{{\cal C}^\perp}+\|f_2\|_{{\cal C}^\perp}\right)^q +\left(1-\frac{1}{q}\right)\|f_1+f_2\|_{{\cal C}^\perp}^q.
\end{align*}
Therefore,
$$
\|f_1+f_2\|_{{\cal C}^\perp}\le \|f_1\|_{{\cal C}^\perp}+\|f_2\|_{{\cal C}^\perp}.
$$
\end{proof}
\begin{rem}\label{SubDiffRem}
For $f,g\in C^\perp$, 
\begin{align*}
\langle f-g, A_p^{-1}g\rangle&=\langle f, A_p^{-1}g\rangle-\langle g, A_p^{-1}g\rangle\\
&\le  \|f\|_{{\cal C}^\perp}\|g\|_{{\cal C}^\perp}^{q-1}-\|g\|^q_{{\cal C}^\perp}\\
&\le \frac{1}{q}\|f\|^q_{{\cal C}^\perp}+\left(1-\frac{1}{q}\right)\|g\|^q_{{\cal C}^\perp}-\|g\|^q_{{\cal C}^\perp}\\
&\le \frac{1}{q}\|f\|^q_{{\cal C}^\perp}-\frac{1}{q}\|g\|^q_{{\cal C}^\perp}.
\end{align*}
Consequently, $A_p^{-1}g$ belongs to the subdifferential of $\frac{1}{q}\|\cdot\|^q_{{\cal C}^\perp}$ at $g$. 
\end{rem}

In order to conclude ${\cal C}^\perp$ is a Banach space, it suffices to show that the norm $\|\cdot\|_{{\cal C}^\perp}$ is equivalent to the standard norm 
\begin{equation}\label{StandardNormonX}
\|f\|_{(W^{1,p}(\Omega))^*}:=\sup\left\{\langle f,\phi\rangle: \|\phi\|_{W^{1,p}(\Omega)}\le 1\right\}
\end{equation}
on $W^{1,p}(\Omega)$. Here $\|\phi\|_{W^{1,p}(\Omega)}:=\left(\int_\Omega(|\phi|^p+|D\phi|^p)dx\right)^{1/p}$.  Below, $\lambda_p$ is the same constant appearing in the Poincar\'e inequality \eqref{PoincareNBC}. 
\begin{prop}
For each $f\in {\cal C}^\perp$, 
$$
\frac{1}{\left(1+1/\lambda_p\right)^{1/p}}\|f\|_{{\cal C}^\perp}\le \|f\|_{(W^{1,p}(\Omega))^*}\le \|f\|_{{\cal C}^\perp}.
$$
\end{prop}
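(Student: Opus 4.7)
The plan is to derive both inequalities directly from the representation \eqref{XnormToTheQ}, namely $\|f\|_{{\cal C}^\perp}^q=\int_\Omega|Du|^p\,dx=\langle f,u\rangle$, where $u\in{\cal S}$ is the unique weak solution of \eqref{NeumannProblem} satisfying \eqref{pAverageZero}. Once this identity is in hand, each inequality reduces to a single application of H\"older, supplemented for the lower bound by the Poincar\'e inequality \eqref{PoincareNBC}.

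For the upper bound $\|f\|_{(W^{1,p}(\Omega))^*}\le \|f\|_{{\cal C}^\perp}$, I would fix an arbitrary $\phi\in W^{1,p}(\Omega)$ and use the weak formulation \eqref{WeakSolnNeumann} to rewrite $\langle f,\phi\rangle=\int_\Omega|Du|^{p-2}Du\cdot D\phi\,dx$. H\"older's inequality with conjugate exponents $q$ and $p$, together with the algebraic identity $(p-1)q=p$ and the representation \eqref{XnormToTheQ}, then yields
$$
\langle f,\phi\rangle \le \left(\int_\Omega|Du|^p\,dx\right)^{1/q}\left(\int_\Omega|D\phi|^p\,dx\right)^{1/p}=\|f\|_{{\cal C}^\perp}\left(\int_\Omega|D\phi|^p\,dx\right)^{1/p}.
$$
Since $(\int_\Omega|D\phi|^p\,dx)^{1/p}\le \|\phi\|_{W^{1,p}(\Omega)}$, taking the supremum over $\phi$ with $\|\phi\|_{W^{1,p}(\Omega)}\le 1$ in \eqref{StandardNormonX} delivers the bound.

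For the lower bound, I would test $f$ against its own potential $u={\cal A}_p^{-1}f$. The definition of the dual norm \eqref{StandardNormonX} gives
$$
\|f\|_{{\cal C}^\perp}^q=\langle f,u\rangle\le \|f\|_{(W^{1,p}(\Omega))^*}\|u\|_{W^{1,p}(\Omega)}.
$$
Because $u$ satisfies \eqref{pAverageZero}, \eqref{PoincareNBC} gives $\int_\Omega|u|^p\,dx\le \lambda_p^{-1}\int_\Omega|Du|^p\,dx$, whence
$$
\|u\|_{W^{1,p}(\Omega)}^p=\int_\Omega|u|^p\,dx+\int_\Omega|Du|^p\,dx\le \left(1+\tfrac{1}{\lambda_p}\right)\int_\Omega|Du|^p\,dx=\left(1+\tfrac{1}{\lambda_p}\right)\|f\|_{{\cal C}^\perp}^q.
$$
Substituting into the previous display and dividing by $\|f\|_{{\cal C}^\perp}^{q/p}$ (the case $f\equiv 0$ being trivial) produces the constant $(1+1/\lambda_p)^{1/p}$ by way of the identity $q-q/p=1$.

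I do not expect any genuine obstacle: the argument is essentially the one used in Appendix \ref{AppDPI} to derive the dual Poincar\'e inequality, now applied with the trivial embedding $W^{1,p}(\Omega)\hookrightarrow W^{1,p}(\Omega)$ in place of $W^{1,p}_0(\Omega)\hookrightarrow L^p(\Omega)$, the extra factor $(1+1/\lambda_p)^{1/p}$ arising because $\|\cdot\|_{W^{1,p}(\Omega)}$ controls \emph{both} the $L^p$ and gradient norms of $u$ whereas $\|f\|_{{\cal C}^\perp}$ sees only the gradient. The only minor point to verify is that $u\in{\cal S}\subset W^{1,p}(\Omega)$ is an admissible test element in \eqref{StandardNormonX}, which is immediate.
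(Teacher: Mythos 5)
Your proof is correct and follows essentially the same route as the paper: the upper bound via Hölder in the weak formulation using $u={\cal A}_p^{-1}f$, and the lower bound by testing $f$ against $u$ and invoking the Poincaré inequality \eqref{PoincareNBC} to control $\|u\|_{W^{1,p}(\Omega)}$ by the gradient seminorm. The only difference is cosmetic — you move $\|u\|_{W^{1,p}(\Omega)}$ to the other side and divide out $\|f\|_{{\cal C}^\perp}^{q/p}$ using $q-q/p=1$, whereas the paper tests against the normalized $u/\|u\|_{W^{1,p}(\Omega)}$ directly — but the content is identical.
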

\begin{proof}
Let $f\neq 0\in {\cal C}^\perp$, and select the weak solution $u\in W^{1,p}(\Omega)$ of \eqref{NeumannProblem} that satisfies \eqref{pAverageZero}. By \eqref{WeakSolnNeumann}, 
\begin{align*}
\langle f,\phi\rangle &\le \left(\int_\Omega |Du|^{p}dx\right)^{1-1/p}\left(\int_\Omega |D\phi|^{p}dx\right)^{1/p}\\
&\le \left(\int_\Omega |Du|^{p}dx\right)^{1-1/p}\|\phi\|_{W^{1,p}(\Omega)}\\
&= \|f\|_{{\cal C}^\perp}\|\phi\|_{W^{1,p}(\Omega)}.
\end{align*}
Thus, $\|f\|_{(W^{1,p}(\Omega))^*}\le \|f\|_{{\cal C}^\perp}$. 

\par Conversely, we can employ \eqref{XnormToTheQ} and the Poincar\'e inequality \eqref{PoincareNBC} to find
\begin{align*}
\|f\|_{(W^{1,p}(\Omega))^*}&\ge \left\langle f , \frac{u}{\|u\|_{W^{1,p}(\Omega)}} \right\rangle \\
&=\frac{\langle f , u\rangle}{\|u\|_{W^{1,p}(\Omega)}}\\
&=\frac{\int_\Omega|Du|^pdx}{\|u\|_{W^{1,p}(\Omega)}}\\
&=\frac{\left(\int_\Omega|Du|^pdx\right)^{1-1/p}\left(\int_\Omega|Du|^pdx\right)^{1/p}}{\|u\|_{W^{1,p}(\Omega)}}\\
&=\|f\|_{{\cal C}^\perp}\frac{\left(\int_\Omega|Du|^pdx\right)^{1/p}}{\left(\int_\Omega(|u|^p+|Du|^p)dx\right)^{1/p}}\\
&\ge \|f\|_{{\cal C}^\perp} \frac{1}{\left(1+1/\lambda_p\right)^{1/p}}.
\end{align*}
\end{proof}
Let us finally argue that ${\cal C}^\perp$ is reflexive.  Using the definition \eqref{Xspace}, it is possible to show that ${\cal C}^\perp$ equipped with the standard norm \eqref{StandardNormonX} is isometrically isomorphic to
$$
\left(W^{1,p}(\Omega)/{\cal C}\right)^*
$$
(see chapter 5, exercise 23 of \cite{folland} for details). As ${\cal C}$ is a closed subspace of the reflexive space $W^{1,p}(\Omega)$, then $W^{1,p}(\Omega)/{\cal C}$ is also reflexive. It then follows that $\left(W^{1,p}(\Omega)/{\cal C}\right)^*$ is necessarily reflexive. Consequently, ${\cal C}^\perp$ equipped with the standard norm \eqref{StandardNormonX} is reflexive.  Since $\|\cdot\|_{{\cal C}^\perp}$ is an equivalent norm to \eqref{StandardNormonX}, ${\cal C}^\perp$ is reflexive when equipped with $\|\cdot\|_{{\cal C}^\perp}$, as well.


\end{document}